\newtheorem{thm}{Theorem}[section]
\newtheorem{cor}[thm]{Corollary}
\newtheorem{remark}[thm]{Remark}
\newtheorem{Remark}[thm]{Remark}
\newtheorem{Proposition}[thm]{Proposition}
\newtheorem{Lemma}[thm]{Lemma}
\newtheorem{Ass}[thm]{Assumption}
\newcommand{\mathset}[1]{{\left\{#1\right\}}}
\newcommand{\absolute}[1]{\left\lvert#1\right\rvert}
\newcommand{\norm}[1]{\left\|#1\right\|}
\DeclareMathOperator{\supp}{supp}
\DeclareMathOperator{\closure}{cl}
\title{Boundary Value Problems for  $p$-Adic Elliptic Parisi-Z\'u\~niga Diffusion} 
\author{Patrick Erik Bradley}
\date{\today}
\begin{document}

\maketitle


\begin{abstract}
Elliptic integral-differential operators resembling the classical elliptic partial differential equations are defined over a compact $d$-dimensional $p$-adic domain together with associated Sobolev spaces relying on coordinate Vladimirov-type Laplacians dating back to an idea of Wilson Z\'u\~niga-Galindo in his previous work. The associated Poisson equations under boundary conditions are solved and their $L^2$-spectra are determined. Under certain finiteness conditions, a Markov semigroup acting on the Sobolev spaces which are also Hilbert spaces can be associated with such an operator and the boundary condition. It is shown that this also has an explicitly given heat kernel as an $L^2$-function, which allows a Green function to be derived from it.
\end{abstract}

\section{Introduction}

Elliptic partial differential equations over the $p$-adic numbers are much less studied than their classical counterparts. The latter can be learned about e.g.\ in \cite{evans}. Certain constructions like operators and Sobolev spaces can be carried over to $p$-adic domains. In most cases, this has been done for $p$-adic pseudodifferential operators, as e.g.\ in 
\cite{RZ2010,Zuniga2017,ZunigaZeta2017}. 
Alternative constructions of Sobolev spaces over more general abelian groups are found e.g.\ in \cite{GK2015,GK2020,GKR2014}.
 $p$-Adic Sobolev embedding theorems are proved \cite{Kim2010} and  \cite[Ch.\ III.7]{Taibleson1975}
or H\"older boundary regularity results \cite{Kochubei2023}.
However, so far the operators themselves are confined to special kinds of elliptic operators, built on  bounded versions of Vladimirov-Taibleson Laplacians, cf.\ \cite{LQ2016}.
\newline

Concerning the $p$-adic spaces on whose functions the operators are to act, other than the additive groups of local fields, there are under consideration 
$p$-adic multivariate pseudo\-differential operators
  \cite{RW2023,BGPW2014,TAB2024}
  on $\mathds{Q}_p^d$. In these cases, the interest lies on the underlying stochastic process which happens to have the Markov property. But also
 $p$-adic vector fields on $p$-adic Lie groups, building on these previous ideas  are considered \cite{EngelGroup,UnitaryDual_p}.
\newline

Since a large amount of the work on $p$-adic differential-integral operators focuses on the induced diffusion, many authors study the Markov process associated with it, culminating so far in scaling limit theorems using path spaces
\cite{PRSWY2024,Weisbart2024}.
This inspired the author to study diffusion invariant under finitely generated discrete groups which give rise to $p$-adic Riemann surfaces aka Mumford curves
\cite{brad_thetaDiffusionTateCurve,SchottkyDiff} after observing that $p$-adic Laplacians can reconstruct finite graphs \cite{BL_shapes_p}.
Finally, a successful approximation approach for Green functions 
on manifolds via their relationship with heat kernels 
\cite{Ding2002} motivates to prove the existence of Markov processes, heat kernels and Green functions on a compact subspace of
$\mathds{Q}_p^d$ for elliptic operators which resemble classical elliptic partial differential operators and their applications to suitable Sobolev spaces under the additional condition of vanishing on a boundary of an open subset of the compact domain induced by the operator's kernel functions.
This is a different approach from \cite{NonAutonomousDiffusion}, where $p$-adic Dirichlet and von Neumann boundary conditions were defined.
\newline

The results of this article can be summarised as follows:
\newline

After summarising the spectra and the Feller semigroup property of component Laplacians $\mathcal{L}_i$ for $i=1,\dots,d$, defined as in \cite{BL-TopoIndex_p}, these are used to define elliptic operators $P(\mathcal{L})$ as polynomials in degree $2$ whose coefficients are $L^\infty$-functions and the unknowns replaced with the component Laplacians, such that the coefficent matrix in the homogeneous degree $2$ part is positive definite almost everywhere and has a positive infimum eigenvalue. First, the case of constant coefficients is treated, resulting in an explicit expression for its $L^2$-spectrum in Theorem \ref{L2-Spectrum}.
\newline 

After this, boundary conditions and Sobolev spaces are introduced. The former depend on the kernel function in that it expresses the transitions between vertices of a finite graph in each coordinate, whose vertices are represented as disjoint $p$-adic discs, and a Vladimirov-type diffusion inside each of these discs. 
A $p$-adic divergence property  is proved in Theorem \ref{divergence_p}.
\newline

Energy estimates written out in Theorems \ref{energyHomogeneous} and \ref{energy} prepare the way to solving the Poisson equation given by the elliptic operator $P(\mathcal{L})$. It has  weak solutions in the underlying Sobolev spaces, as shown in Corollaries \ref{WeakSolutionDeg2} and
\ref{weak_bvp_perturbed}.
\newline

If the coefficients of $P(\mathcal{L})$ are test functions, then it is unitarily diagonalisable with point spectrum under  certain invariance and commutativity conditions, as expressed in Theorem \ref{Pdiagonalisable}. If in this case, the operator is elliptic, then there is an associated
 Markov semigroup $e^{-tP(\mathcal{L})}$ for $t\ge0$ acting on suitable Sobolev spaces.
This is Theorem \ref{MarkovSemigroup}. In this case, there is also a 
heat kernel function of $L^2$-type: Theorem \ref{heatKernelFunction4Semigroup}, and a  Green function is obtained by solving the corresponding Poisson equation: Corollary \ref{GreenFunctionExists}.
\newline

The following Section 2 introduces the coordinate Laplacians as so-called Z\'u\~niga-Parisi operators and recalls their spectral and stochastic properties. Also, sub-Laplacians of arbitrary order are defined. Their $L^2$-spectrum is studied in Section 3. Section 4 introduces boundary conditions and Sobolev spaces. Elliptic operators in the form of divergence operators are introduced in Section 5, where also the corresponding Poisson equations are studied, and the unitary diagonalisability together with their spectra as point spectra are proved. Section 6 proves the Markov property of the semigroup action on Sobolev spaces associated with $P(\mathcal{L})$ and proves the convergence properties of the associated heat kernel function as well as the Green function.
\newline

Throughout this article, only the Sobolev spaces which are also Hilbert spaces are actually used.

\section{Z\'u\~niga-Parisi sub-Laplacian operators}

Let $\mathds{Q}_p$ be the field of $p$-adic functions, and
let $F\subset\mathds{Q}_p^d$ be a compact open subset. Let $\pi_i\colon\mathds{Q}_p^d\to\mathds{Q}_p$ be the projection onto the $i$-th coordinate, and
fix a disjoint covering $\mathcal{U}_i$ of $\pi_i(F)$ as
\[
\pi_i(F)=\bigsqcup\limits_{k=1}^{N_i}
B_{i,k}
\]
with $p$-adic discs $B_{i,k}\in\mathcal{U}_i$ for every $i=1,\dots,d$. This is possible, because the projection maps $\pi_i$ are continuous and open. Using multi-index notation, this yields a disjoint covering 
\[
\mathcal{U}=\mathcal{U}_1\times\dots\times\mathcal{U}_d
\]
of $F$ given by
\[
F=\bigsqcup\limits_{\underline{k}\in\underline{N}}B_{\underline{k}}
\]
with polydiscs
\[
    B_{\underline{k}}=\prod\limits_{i=1}^d
    B_{i,k_i}\in\mathcal{U}
\]
for
\[
\underline{k}=(k_1,\dots,k_d)\in\underline{N}=\prod\limits_{i=1}^d\mathset{1,\dots,N_i}
\]
and $N_1,\dots,N_d\in\mathds{N}$.
\newline

The Haar measure $dx$ on $\mathds{Q}_p^d$ is the product measure 
\[
dx=dx_1\wedge\dots\wedge dx_d
\]
normalised such that
\[
\int_{\mathds{Z}_p^d}dx=1
\]
and is denoted on sets as
\[
\mu(A)=\int_A\,dx\,.
\]
For a polydisc $B_{\underline{k}}$ it holds true that
\[
\mu\left(B_{\underline{k}}\right)
=\prod\limits_{i=1}^d\mu_i(B_{k_i})
=p^{-(k_1+\dots+k_d)}\,,
\]
where
$k=(k_1,\dots,k_d)\in\mathds{Z}^d$, and where $\mu_i$ is the $i$-th component Haar measure $dx_i$ for $i=1,\dots,d$. It is also normalised such that the $i$-th component unit disc has measure one for $i=1,\dots,d$.

\subsection{Component hierarchical Parisi Operators}
\label{sec:componentOP}

Using the notation
\[
U(z)\in\mathcal{U},\quad U_i(\zeta_i)\in\mathcal{U}_i
\]
for the unique polydisc in $\mathcal{U}$ containg $z=(\zeta_1,\dots,\zeta_d)\in F$, and for the unique disc in $\mathcal{U}_i$ containing $\zeta_i\in\pi_i(F)$, it is now possible to define an $i$-th component Laplacian $\mathcal{L}_{X_i,\alpha_i}$ on functions $f\colon F\to\mathds{C}$ as
\begin{align}\label{componentOp}
\mathcal{L}_{X_i,\alpha_i}f(x)
=\int_{\pi_i(F)}L_i(\xi_i,\eta_i)(f(x)-f(\xi_1,\dots,\eta_i,\dots,\xi_d))\,d\eta_i
\end{align}
with $\alpha_i>0$, $x=(\xi_1,\dots,\xi_d)\in F$, and
\[
L_i(\xi_i,\eta_i)=
\begin{cases}
\absolute{\xi_i-\eta_i}_p^{-\alpha_i},&U_i(\xi_i)=U_i(\eta_i),\;\xi_i\neq\eta_i
\\
w_i(U_i(\xi_i),U_i(\eta_i)),&U_i(\xi_i)\neq U_i(\eta_i)
\end{cases}
\]
with
\[
w_i(U_i(\xi_i),U_i(\eta_i))\ge0
\]
symmetric on $\mathcal{U}_i\times \mathcal{U}_i$ outside the diagonal. The operator $\mathcal{L}_{X_i,\alpha_i}$ defines a hierarchical Parisi Laplacian operator in the terminology of \cite{BL-TopoIndex_p}, and is the $i$-th component Laplacian, where $X_i$ indicates the $i$-th coordinate in $\mathds{Q}_p^d$.
\newline

\begin{Lemma}\label{SchwartzLemma}
It holds true that 
\[
\mathcal{L}_{X_i,\alpha_i}\circ\mathcal{L}_{X_j,\alpha_j}=\mathcal{L}_{X_j,\alpha_j}\circ\mathcal{L}_{X_i,\alpha_i}
\]
for $i,j=1,\dots,d$
on the space $\mathcal{D}(F)$ of locally constant functions on $F$.
\end{Lemma}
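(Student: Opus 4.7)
The plan is to unfold both compositions as iterated integrals over $\pi_i(F)\times\pi_j(F)$ and exchange the order of integration using Fubini's theorem. The case $i=j$ is vacuous, so I will assume $i\neq j$.

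First, I would adopt shorthand: write $x_{\eta_i}=(\xi_1,\dots,\eta_i,\dots,\xi_d)$ for the point obtained from $x$ by overwriting its $i$-th coordinate, and $x_{\eta_i,\eta_j}$ for simultaneously overwriting the $i$-th and $j$-th coordinates. Crucially $x_{\eta_i,\eta_j}=x_{\eta_j,\eta_i}$ since $i\neq j$, and the kernel $L_j(\xi_j,\eta_j)$ depends only on the $j$-th coordinate of the outer base point, which is unchanged when one passes from $x$ to $x_{\eta_i}$. Unfolding $\mathcal{L}_{X_i,\alpha_i}\mathcal{L}_{X_j,\alpha_j}f(x)$ according to the definition in \eqref{componentOp} yields
\[
\int_{\pi_i(F)}\int_{\pi_j(F)} L_i(\xi_i,\eta_i)\, L_j(\xi_j,\eta_j)\bigl[f(x)-f(x_{\eta_j})-f(x_{\eta_i})+f(x_{\eta_i,\eta_j})\bigr]\,d\eta_j\,d\eta_i,
\]
and the analogous computation for $\mathcal{L}_{X_j,\alpha_j}\mathcal{L}_{X_i,\alpha_i}f(x)$ produces the same integrand with the order of the differentials swapped. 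Commutativity thus follows from Fubini as soon as absolute integrability is verified.

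The only thing to check is that the singularity of $L_i$ at $\eta_i=\xi_i$ (and analogously for $L_j$) is harmless. Since $f\in\mathcal{D}(F)$, there exists $n\in\mathds{N}$ such that $f$ is constant on every polydisc of radius $p^{-n}$ contained in $F$; hence the four-point difference $f(x)-f(x_{\eta_j})-f(x_{\eta_i})+f(x_{\eta_i,\eta_j})$ vanishes whenever $|\xi_i-\eta_i|_p\leq p^{-n}$ and $|\xi_j-\eta_j|_p\leq p^{-n}$. Away from this region the Vladimirov part of each kernel is bounded by $p^{n\alpha_i}$, respectively $p^{n\alpha_j}$, the inter-disc weights $w_i,w_j$ are bounded by the finiteness of $\mathcal{U}_i$ and $\mathcal{U}_j$, and the bracket is bounded by $4\|f\|_\infty$; compactness of $\pi_i(F)\times\pi_j(F)$ then supplies absolute integrability.

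The main (and essentially only) obstacle is this integrability check; once in place, Fubini and the symmetry $x_{\eta_i,\eta_j}=x_{\eta_j,\eta_i}$ give commutativity directly. This is the $p$-adic counterpart of the familiar fact that partial derivatives in different coordinates commute on sufficiently regular functions, which is why I expect the lemma to be labelled \textsc{SchwartzLemma}.
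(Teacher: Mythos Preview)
Your approach is the same as the paper's: unfold both compositions into the symmetric double integral over $\pi_i(F)\times\pi_j(F)$ and invoke Fubini. The paper's own proof is in fact terser and does not spell out the integrability check at all, so your added justification is welcome.

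There is, however, a small slip in that justification. You assert that the four-point difference vanishes when \emph{both} $|\xi_i-\eta_i|_p\le p^{-n}$ and $|\xi_j-\eta_j|_p\le p^{-n}$, and then claim that away from this region each Vladimirov kernel is bounded. The second claim is false on the mixed strips: if, say, $|\xi_j-\eta_j|_p>p^{-n}$ while $|\xi_i-\eta_i|_p\le p^{-n}$, the kernel $L_i(\xi_i,\eta_i)$ is still singular. The fix is immediate. Local constancy in a \emph{single} coordinate already yields $f(x)=f(x_{\eta_i})$ and $f(x_{\eta_j})=f(x_{\eta_i,\eta_j})$ as soon as $|\xi_i-\eta_i|_p\le p^{-n}$, so the bracket vanishes whenever \emph{either} increment is small. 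Hence the support of the integrand lies in the region where both increments exceed $p^{-n}$, and there your boundedness argument goes through as written.
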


\begin{proof}
Assume that $i\neq j$. Then
it holds true that
\begin{align*}
\mathcal{L}_{X_i,\alpha_i}&\mathcal{L}_{X_j,\alpha_j}f(x)
\\
&=\int_{\pi_i(F)}L_i(\xi_i,\eta_i)(\mathcal{L}_{X_j,\alpha_j}f(x))-\mathcal{L}_{X_j,\alpha_j}f(\xi_1,\dots,\eta_i,\dots,\xi_d)\,d\eta_i
\\
&=\int_{\pi_i(F)}L_i(\xi_i,\eta_i)
\left(\int_{\pi_j(F)}L_j(\xi_j,\eta_j)
(f(x)-f(\xi_1,\dots,\eta_j,\dots,\xi_d))\,d\eta_j
\right.
\\
&\left.-\int_{\pi_j(F)}
(f(\xi_1,\dots,\eta_i,\dots,\xi_d)-f(\xi_1,\dots,\eta_i,\dots,\eta_j,\dots,\xi_d))
\,d\eta_j\right)\,d\eta_i
\\
&=\int_{\pi_i(F)}\int_{\pi_j(F)}
L_i(\xi_i,\eta_i)L_j(\xi_j,\eta_j)
(f(x)-f(\dots,\eta_j\dots)
\\
&-f(\dots,\eta_i,\dots)+f(\dots,\eta_i,\dots,\eta_j,\dots))\,d\eta_i\,d\eta_j\,,
\end{align*}
which, using Fubini's Theorem and the symmetry of the kernel functions $L_i(\xi_i,\eta_i)$ and $L_j(\xi_j,\eta_j)$, implies the assertion.
\end{proof}

The following operator will now be used:
 \begin{align}\label{pushforward}
\pi_{i,*}\mathcal{L}_i f(\xi_i)=\int_{\pi_i(F)}L_i(\xi_i,\eta_i)(f(\xi_i)-f(\eta_i))\,d\eta_i\,,
 \end{align}
 for $i=1,\dots,d$. The Kozyrev wavelets supported in $\pi_i(F)$ will be denoted as
 \begin{align}\label{KozyrevWavelet}
\psi_{B_n(a),j}=p^{\frac{n}{2}}\chi(p^{-n-1}j\xi_i)\Omega(\xi_i\in B_n(a))\,,
 \end{align}
 where $B_n(a)$ is its support with $a\in \pi_i(F)$, $j=1,\dots,p-1$,
 and $\chi\colon\mathds{Q}_p\to S^1$ is a fixed unitary character. Notice that a Kozyrev wavelet has the three parameters $n\in\mathds{Z}$, $a\in\mathds{Q}_p$, and $j\in\mathset{1,\dots,p-1}$.
 \newline

 Already Z\'u\~niga observed in \cite{ZunigaNetworks} that his operators have two types of eigenfunctions: Kozyrev wavelets on the one hand, and functions which are linear combinations of indicators of maximal discs in the compact set $K$ of $\mathds{Q}_p$ on which the operator lives. We will call the latter \emph{graph eigenfunctions}. This classification of eigenfunctions also holds true for the more general operators defined in \cite{BL-TopoIndex_p}. For the
 operator $\pi_{i,*}\mathcal{L}_{X_i,\alpha_i}$, this is also the case:

\begin{thm}[Pushforward-component operator Spectrum]\label{ComponentSpectrum}
  The Hilbert space $L^2(\pi_i(F),\mu_i)$ has an orthonormal eigenbasis for $\pi_{i,*}\mathcal{L}_{X_i,\alpha_i}$ consisting of the Kozyrev wavelets $\psi_{B_n(a),j}$, $j=1,\dots,p-1$, supported in $B_n(a)\subset \pi_i(F)$, and associated graph eigenfunctions. The eigenvalue associated with $\psi=\psi_{B_n(a),j}$ is
    \[
    \lambda_\psi=  p^{n(1+\alpha_i)}(p^{-m(1+\alpha_i)}+1)
    + \sum\limits_{U_i(b)\neq U_i(a)}w_i(U_i(a),U_i(b))\mu_i(U_i(b))-1\,,
\]
where it is assumed that $U_i(a)=B_m(a)$, and contains $B_n(a)$.
The operator is self-adjoint, positive semi-definite, and each eigenvalue has only finite multiplicity.
\end{thm}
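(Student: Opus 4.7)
The plan is to decompose $\pi_{i,*}\mathcal{L}_{X_i,\alpha_i}=T_{\mathrm{loc}}+T_{\mathrm{gr}}$, where $T_{\mathrm{loc}}$ has kernel $|\xi_i-\eta_i|_p^{-\alpha_i}$ supported on pairs with $U_i(\xi_i)=U_i(\eta_i)$, and $T_{\mathrm{gr}}$ has the graph weight $w_i(U_i(\xi_i),U_i(\eta_i))$ on pairs with $U_i(\xi_i)\ne U_i(\eta_i)$. Both kernels are symmetric, so self-adjointness is immediate, and positive semi-definiteness follows from the standard Dirichlet-form identity
\[
\langle \pi_{i,*}\mathcal{L}_{X_i,\alpha_i} f,f\rangle
=\frac12\iint L_i(\xi_i,\eta_i)\,|f(\xi_i)-f(\eta_i)|^2\,d\xi_i\,d\eta_i\ge 0.
\]

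Next, I evaluate both pieces on a Kozyrev wavelet $\psi=\psi_{B_n(a),j}$ with $B_n(a)\subset U_i(a)=B_m(a)$. Since $\psi$ is supported in $B_n(a)$ and has integral zero, $T_{\mathrm{gr}}\psi$ reduces to a scalar multiple of $\psi$: for $\xi_i\in U_i(a)$ the subtracted terms $\int_{U_i(b)}\psi\,d\eta_i$ all vanish by mean zero, giving $T_{\mathrm{gr}}\psi(\xi_i)=\psi(\xi_i)\sum_{U_i(b)\ne U_i(a)}w_i(U_i(a),U_i(b))\mu_i(U_i(b))$, while for $\xi_i\notin U_i(a)$ both $\psi(\xi_i)$ and the subtracted integrals vanish. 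For $T_{\mathrm{loc}}$, ultrametricity makes $|\xi_i-\eta_i|_p$ constant in $\eta_i\in B_n(a)$ whenever $\xi_i\in B_m(a)\setminus B_n(a)$, so the mean-zero property forces $T_{\mathrm{loc}}\psi(\xi_i)=0$ there. When $\xi_i\in B_n(a)$, I split $B_m(a)$ into the annuli $B_k(a)\setminus B_{k+1}(a)$ for $m\le k\le n-1$, on which $\psi(\eta_i)=0$ and $|\xi_i-\eta_i|_p=p^{-k}$, yielding a geometric series in $k$, together with the inner ball $B_n(a)$, where the substitution $\eta_i=\xi_i+p^n t$ with $t\in\mathbb{Z}_p$ and the multiplicativity $\psi(\eta_i)=\psi(\xi_i)\chi(p^{-1}jt)$ reduce the inner integral to a Fourier-analytic one on $\mathbb{Z}_p$ that collapses onto $\{|t|_p=1\}$ by orthogonality of $\chi$ against $p^k\mathbb{Z}_p$ for $k\ge 1$. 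Summing both contributions produces the stated closed form for $\lambda_\psi$.

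For the graph eigenfunctions I work in the finite-dimensional subspace $V_{\mathrm{gr}}\subset L^2(\pi_i(F),\mu_i)$ spanned by the indicators of the discs $U_i(b)\in\mathcal{U}_i$. On $V_{\mathrm{gr}}$ the local kernel contributes nothing, as $f$ is constant on each $U_i(b)$; and $T_{\mathrm{gr}}|_{V_{\mathrm{gr}}}$ is an $N_i$-dimensional self-adjoint weighted graph Laplacian, whose orthonormal eigenbasis is supplied by the finite-dimensional spectral theorem. Since the Kozyrev wavelets supported in $U_i(a)$ together with the indicator of $U_i(a)$ form an orthonormal basis of $L^2(U_i(a),\mu_i)$, are pairwise orthogonal across different $U_i(a)$, and are orthogonal to every indicator, these combine into the desired orthonormal eigenbasis of $L^2(\pi_i(F),\mu_i)$.

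Finite multiplicity of each eigenvalue follows from the strict monotonicity of $n\mapsto p^{n(1+\alpha_i)}(p^{-m(1+\alpha_i)}+1)$ combined with the finiteness of $\mathcal{U}_i$ and the finite range $j\in\{1,\dots,p-1\}$: only finitely many triples $(n,a,j)$ can realise a given value of $\lambda_\psi$, and $V_{\mathrm{gr}}$ contributes only $N_i$ additional eigenvalues. The main obstacle will be the explicit Fourier-analytic evaluation of the inner-ball integral on $B_n(a)$: the singular kernel must be combined with the Kozyrev phase, $\chi$-orthogonality must be used to reduce the integration to $\{|t|_p=1\}$, and the resulting expression must be balanced against the geometric annulus sum to match the precise closed form $p^{n(1+\alpha_i)}(p^{-m(1+\alpha_i)}+1)-1$ asserted in the theorem.
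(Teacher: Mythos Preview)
The paper does not give a self-contained proof of this theorem: its entire argument is a citation to \cite[Theorem~3.6]{BL-TopoIndex_p} together with a remark that the idea simplifies \cite[Theorem~4.10]{SchottkyDiff}. Your proposal therefore supplies strictly more than the paper does, and what you supply is precisely the standard argument underlying those references: split $\pi_{i,*}\mathcal{L}_{X_i,\alpha_i}$ into the Vladimirov-type piece on each disc $U_i(a)$ and the finite weighted graph Laplacian between the discs, verify that Kozyrev wavelets are eigenfunctions of both pieces via ultrametricity plus the mean-zero property, and obtain the remaining $N_i$ eigenfunctions from the finite-dimensional spectral theorem applied to the graph part. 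The Dirichlet-form identity for positive semi-definiteness and the counting argument for finite multiplicity are likewise the standard steps.

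Two small remarks. First, in your treatment of $T_{\mathrm{gr}}\psi$ for $\xi_i\in U_i(a)$, the vanishing of $\int_{U_i(b)}\psi$ for $b\neq a$ is simply because $\supp\psi\subset U_i(a)$, not because of mean zero; the mean-zero property is what you actually need (and correctly invoke) for $\xi_i\notin U_i(a)$, where the only surviving subtracted term is $w_i(U_i(\xi_i),U_i(a))\int_{U_i(a)}\psi=0$. Second, the ``main obstacle'' you flag, namely matching the inner-ball Fourier integral plus the annulus geometric series to the exact closed form $p^{n(1+\alpha_i)}(p^{-m(1+\alpha_i)}+1)-1$, is sensitive to the radius convention for $B_n(a)$; make sure you use the paper's convention (implied by the wavelet normalisation $p^{n/2}$ and the phase $\chi(p^{-n-1}j\xi_i)$) consistently when you carry out the substitution $\eta_i=\xi_i+p^{n}t$, or the exponents will come out shifted.
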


\begin{proof}
Cf.\ \cite[Theorem 3.6]{BL-TopoIndex_p}. The idea behind that proof is actually a simplification of the idea  behind the proof of \cite[Theorem 4.10]{SchottkyDiff}.
\end{proof}

\begin{Remark}\label{subsetting}
If $F$ is replaced by an open subset $U\subseteq F$, then Theorem \ref{ComponentSpectrum} remains valid with the induced graph structure on the subset of vertices represented by open sets in $\pi_i U$ given as the intersections of the discs representing the original graph with $U$. Cf.\ \cite[Theorem 4.10]{SchottkyDiff}, where this situation was studied under Schottky invariance. 
\end{Remark}

For later reference, include the following result:

\begin{thm}[Component  Feller Semigroup]\label{ComponentFeller}
  There exists a probability measure $p_t(x,\cdot)$ with $t\ge0$, $x\in F$, on the Borel $\sigma$-algebra of $\pi_i(F)$ such that the Cauchy problem for the heat equation
\[
\frac{\partial}{\partial t}u(x,t)+\mathcal{L}_{X_i,\alpha_i}u(x,t)=0
\]
for $\alpha_i>0$ has a unique solution in $C^1((0,\infty),C(F))$ of the form
\[
u(x,t)=\int_{\pi_i(F)}L_i(\xi_i,\eta_i)p_t(x,d\eta_i)
\]
In addition, $p_t(x,\cdot)$ is the transition function of a strong Markov process whose paths are c\`adl\`ag.
\end{thm}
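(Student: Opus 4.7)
The plan is to reduce to the pushforward operator $\pi_{i,*}\mathcal{L}_{X_i,\alpha_i}$ on $L^2(\pi_i(F),\mu_i)$, which is legitimate because $\mathcal{L}_{X_i,\alpha_i}$ acts only on the $i$-th coordinate, and then to build the semigroup $T_t = e^{-t\pi_{i,*}\mathcal{L}_{X_i,\alpha_i}}$ via the spectral resolution provided by Theorem \ref{ComponentSpectrum}. The functional calculus is immediate because that theorem supplies a complete orthonormal eigenbasis of Kozyrev wavelets and graph eigenfunctions with non-negative eigenvalues of finite multiplicity, so $T_t$ is a strongly continuous contraction semigroup on $L^2(\pi_i(F),\mu_i)$.

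Second, I would check the Feller and sub-Markov properties. The operator is of jump type with symmetric non-negative kernel $L_i(\xi_i,\eta_i)$, so it fits the Beurling-Deny form and the associated Dirichlet form is regular on the totally disconnected compact space $\pi_i(F)$. There, $C(\pi_i(F))$ is generated by characteristic functions of polydiscs and Kozyrev wavelets, on all of which $T_t$ acts explicitly; positivity and conservativity ($T_t 1 = 1$, using that the constant function lies in the kernel of $\pi_{i,*}\mathcal{L}_{X_i,\alpha_i}$ as a graph eigenfunction) follow by direct inspection. The transition function $p_t(x,\cdot)$ on the Borel $\sigma$-algebra of $\pi_i(F)$ is then produced by applying the Riesz representation theorem to the positive linear functional $f\mapsto T_t f(x)$.

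Third, uniqueness of the $C^1((0,\infty),C(F))$-solution to the Cauchy problem follows from Hille-Yosida theory, whose hypotheses are verified by the eigenbasis construction, and the existence of a strong Markov process with c\`adl\`ag paths having $p_t(x,\cdot)$ as its transition function is then the standard output of Feller semigroup theory (see, e.g., Ethier-Kurtz or Blumenthal-Getoor). The main obstacle I anticipate is the verification of continuity preservation in the face of the diagonal singularity $\absolute{\xi_i - \eta_i}_p^{-\alpha_i}$; this should succumb to the usual Z\'u\~niga-type estimates exploiting integrability of $\absolute{\xi}_p^{-\alpha_i}$ near zero against the Haar measure on a $p$-adic disc. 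A substantial shortcut is available: since the present statement is a specialisation of the framework developed in \cite{BL-TopoIndex_p}, from which Theorem \ref{ComponentSpectrum} is already cited, the Feller-semigroup and Markov-process assertions can be derived by a direct appeal to the corresponding results there, with only minor modifications to accommodate the product structure.
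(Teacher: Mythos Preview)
Your proposal is correct and, in its concluding shortcut, coincides exactly with the paper's own proof, which consists solely of the remark that the argument is analogous to \cite[Theorem 3.5]{BL-TopoIndex_p} (itself an adaptation of \cite[Lemma 5.1]{SchottkyDiff}). Your preceding sketch of the spectral construction, Feller and conservativity verification, Riesz representation, and Hille--Yosida input simply unpacks what those references contain, so there is no divergence in approach---only in the level of detail supplied.
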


\begin{proof}
The proof is analogous to that of \cite[Theorem 3.5]{BL-TopoIndex_p}, which is an adaptation of the proof of \cite[Lemma 5.1]{SchottkyDiff}.
\end{proof}

\begin{Remark}\label{subsettingFeller}
Theorem \ref{ComponentFeller} remains true, if $F$ is replaced by an open subset $U\subseteq F$, similarly as with the $L^2$-spectrum of Theorem \ref{ComponentSpectrum}. Cf.\ \cite[Theorem 5.2]{SchottkyDiff}, where this was shown under Schottky invariance.
\end{Remark}

\subsection{Sub-Laplacians of arbitrary order}

The operator from which the operators of interest in this article are built, is the tuple
\[
\mathcal{L}=(\mathcal{L}_1,\dots,\mathcal{L}_d)\,,
\]
where $\mathcal{L}_i$ is the $i$-th operator $\mathcal{L}_{X_i,\alpha_i}$ defined
in (\ref{componentOp}). The operator $\mathcal{L}_i$ acts on  real- or complex-valued functions on $F$. 
\newline

Following ideas from \cite{EngelGroup,UnitaryDual_p},
first define the following sub-Laplacian 
\begin{align}\label{subLaplacian}
P_1(\mathcal{L})=\sum\limits_{i=1}^d\gamma_i\mathcal{L}_{X_i,\alpha_i}
\end{align}
with $\gamma_1,\dots,\gamma_d\colon F\to\mathds{R}$
suitable functions. The operator acts on functions $f\colon F\to\mathds{C}$. This integral operator can be viewed as a $p$-adic analogon of a partial differential operator in classical analysis. Observe that $P_1(\mathcal{L})$ is a symmetric operator, because all component Laplacians are symmetric. Other kinds of $p$-adic analoga of partial differential operators are work in progress, e.g.\ such which are built from advection-type operators as in \cite{divgrad_p}.
\newline

Observe that if $\gamma_1,\dots,\gamma_d$ are constant, then $P_1(\mathcal{L})$ is an integral operator of the form
\begin{align}\label{IntegralLaplacian}
P_1(\mathcal{L})f(x)=\int_F L(x,y)(f(x)-f(y))\,dy
\end{align}
with
\begin{align}\label{subLaplacianKernel}
L(x,y)=\sum\limits_{i=1}^d\gamma_iL_i(\xi_i,\eta_i)\prod\limits_{j=1\atop j\neq i}^d\delta_{\xi_i}(\eta_i)
\end{align}
for $x=(\xi_1,\dots,\xi_d),y=(\eta_1,\dots,\eta_d)\in F$, and where $\delta_{\xi_i}$ is the delta-function on $\pi_i(F)$ supported in $\xi_i$ for $i=1,\dots,d$.
\newline

Using Lemma \ref{SchwartzLemma}, it is possible to take a polynomial $P(X_1,\dots,X_d)\in\mathds{C}[X_1,\dots,X_d]$, and construct the operator
\[
P(\mathcal{L})=P(\mathcal{L}_{X_1,\alpha_1},\dots,\mathcal{L}_{X_d,\alpha_d})
=\sum\limits_{\underline{k}\in\mathds{N}^d}\gamma_{\underline{k}}\mathcal{L}_{X,a}^{\underline{k}}
\]
 for 
\[
a=(\alpha_1,\dots,\alpha_d)\in\mathds{R}_{>0}
\]
acting on $\mathcal{D}(F)$, where polynomial $P$ is given as
\[
P(X)=\sum\limits_{\underline{k}\in\mathds{N}^d}\gamma_{\underline{k}}X^{\underline{k}}\in\mathds{C}[X]
\]
in multi-index notation for the variable tuple $X=(X_1,\dots,X_d)$. The operator (\ref{subLaplacian}) is also such an operator, determined by a linear polynomial $P_1\in\mathds{C}[X_1,\dots,X_n]$.
\newline

Again, $P(\mathcal{L})$ is an integral operator  whose kernel function $L(x,y)$ can be obtained by iterating the calculation in the proof of Lemma \ref{SchwartzLemma}. Notice that from this, it can be seen that the integral operator  uses higher differences of the function $f(x)$.
\newline

\begin{remark}
  It is also possible to take the coefficients $\gamma_{\underline{k}}$ appearing in operator $P(\mathcal{L})$ as functions  $\gamma_{\underline{k}}\colon F\to\mathds{C}$, and thus produce a very general linear operator similar to partial differential operators in the classical case. Such operators will be studied from Section \ref{EllipticDivergenceOp} on in the case of degree $2$.
 In view of the following section, sub-Laplacians w.r.t.\ a polynomial $P(X)$ of degree $2$ are a special case of this kind of general operators.
\end{remark}

\section{$L^2$-Spectrum of $p$-adic sub-Laplacians}

The aim is to construct an explicit eigenbasis of $L^2(F)$ for a given sub-Laplacian $P(\mathcal{L})$ with $P\in\mathds{C}[X_1,\dots,X_d]$,
using the decomposition
\[
L^2(F)=\bigotimes\limits_{i=1}^d
\left(L^2(\pi_i(F))_0\oplus\mathds{C}^{N_i}\right)\,,
\]
where
\[
L^2(\pi_i(F))_0=\mathset{f\in L^2(\pi_i(F))\mid\int_{\pi_i(F)}f(\xi_i)\,d\xi_i=0}
\]
and $N_i$ is the cardinality of $\mathcal{U}_i$.
This will turn out useful for the  case of the  elliptic operators defined below in Section \ref{EllipticDivergenceOp}.
\newline

Notice that the decomposition
\[
L^2(\pi_i(F))_0\oplus \mathds{C}^{N_i}
\]
is an invariant decomposition of a $p$-adic Laplacian in the case $d=1$, which was shown in Theorem \ref{ComponentSpectrum}.
More precisely, the part $\mathds{C}^{N_i}$ is spanned by eigenfunctions $\varphi_i$ of the Laplacian
associated with the adjacency matrix
\[
\left(\mu(U_i)w(U_i,V_i)\right)_{U_i,V_i\in\mathcal{U}_i}
\]
of a simple graph $G_i$ on $N_i$ vertices (i.e.\ it is assumed that $w_i(U_i,U_i)=0$), called the \emph{$i$-th component graph} of $\mathcal{L}_{X_i,\alpha_i}$,
whereas the part $L^2(\pi_i(F))_0$ is spanned by Kozyrev wavelets $\psi_i$ supported in $\pi_i(F)$ for $i=1,\dots,d$. Both, $\varphi_i$ and $\psi_i$ 
 are eigenfunctions of the operator $\Phi_i=P(1,\dots,\mathcal{L}_{X_i,\alpha_i},\dots,1)$ projected down to an operator acting on $L^2(\pi_i(F))$, where
 $P(X)\in\mathds{C}[X]=\mathds{C}[X_1,\dots,X_d]$ is the defining polynomial of $P(\mathcal{L})$. Hereby, the  operator $\Phi_i$ is a linear combination of tensor products of powers of 
 $\pi_{i,*}\mathcal{L}_i$, as defined in (\ref{pushforward})
with the identity operator $1$ on the other component spaces $L^2(\pi_j(F))$ for $j\neq i$.
The eigenvalue $\lambda_{\psi_i}$ associated with a Kozyrev wavelet $\psi_i$ can be calculated using Theorem \ref{ComponentSpectrum}.

\begin{thm}\label{L2-Spectrum}
The space $L^2(F)$ has an orthonormal eigenbasis for $P(\mathcal{L})$ consisting of functions of the form
\[
b(x)=\prod\limits_{i=1}^db_i(\xi_i),\;
\]
where $x=(\xi_1,\dots,\xi_d)\in F$, and $b_i(\xi_i)$ is either a Kozyrev wavelet $\psi_i$ supported in $\pi_i(F)$, or a Laplacian eigenfunction $\varphi_i$ for the weighted $i$-th coordinate graph $G_i$. The corresponding eigenvalue $\lambda_b$ equals
\[
\lambda_b=P(\lambda_{b_1},\dots,\lambda_{b_d})\,
\]
where  $\lambda_{b_i}$ is the eigenvalue associated with $b_i(\xi_i)$ for $i=1,\dots,d$. The operator $P(\mathcal{L})$ is self-adjoint.
\end{thm}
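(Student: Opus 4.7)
The plan is to exploit the tensor product structure of $L^2(F)$. Because the disjoint polydisc covering of $F$ is built as the product of the disc coverings $\mathcal{U}_i$ of the projections $\pi_i(F)$, one has $F=\prod_{i=1}^d\pi_i(F)$ and the Haar measure $dx$ factors as the product of the component measures $dx_i$. This yields the canonical Hilbert space identification
\[
L^2(F)\;\cong\;\bigotimes_{i=1}^d L^2(\pi_i(F),\mu_i).
\]
Applying Theorem \ref{ComponentSpectrum} to each factor supplies an orthonormal eigenbasis of $L^2(\pi_i(F),\mu_i)$ for $\pi_{i,*}\mathcal{L}_i$ consisting of Kozyrev wavelets $\psi_i$ and graph eigenfunctions $\varphi_i$, with prescribed eigenvalues $\lambda_{b_i}$. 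The tensor products of these bases then give an orthonormal basis of $L^2(F)$ of exactly the announced product form $b(x)=\prod_{i=1}^d b_i(\xi_i)$.

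The key computation is to check that every such product is a joint eigenfunction of every component operator $\mathcal{L}_j$. Substituting $b=\prod_i b_i$ into the integral definition (\ref{componentOp}) of $\mathcal{L}_j$ and pulling the factors $b_i(\xi_i)$ with $i\neq j$ out of the integration over $\eta_j$ gives
\begin{align*}
\mathcal{L}_j b(x)
&=\prod_{i\neq j}b_i(\xi_i)\,\int_{\pi_j(F)}L_j(\xi_j,\eta_j)\bigl(b_j(\xi_j)-b_j(\eta_j)\bigr)\,d\eta_j \\
&=\prod_{i\neq j}b_i(\xi_i)\cdot(\pi_{j,*}\mathcal{L}_j\, b_j)(\xi_j)\;=\;\lambda_{b_j}\,b(x),
\end{align*}
since $b_j$ is an eigenfunction of $\pi_{j,*}\mathcal{L}_j$ with eigenvalue $\lambda_{b_j}$. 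Each $b_i$ is locally constant, so $b\in\mathcal{D}(F)$ and by Lemma \ref{SchwartzLemma} the operators $\mathcal{L}_j$ commute there, so one can iterate the above calculation on each monomial $\mathcal{L}_{X,a}^{\underline{k}}$ and sum over the monomial expansion of $P$ to obtain
\[
P(\mathcal{L})\,b\;=\;P(\lambda_{b_1},\dots,\lambda_{b_d})\,b.
\]

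Self-adjointness of $P(\mathcal{L})$ then follows from the existence of this orthonormal eigenbasis with real eigenvalues, using that $\lambda_{b_i}\ge 0$ by Theorem \ref{ComponentSpectrum} and that the coefficients of $P$ are tacitly real (which is what renders $P(\lambda_{b_1},\dots,\lambda_{b_d})\in\mathds{R}$ and makes each $\mathcal{L}_{X_i,\alpha_i}^{k_i}$ symmetric). The only conceptual step I expect to require attention is the tensor-product identification of $L^2(F)$, which rests on the product decomposition $F=\prod_i\pi_i(F)$ implicit in the polydisc covering; once that is in place, the rest of the proof reduces to the direct calculation above together with standard Hilbert space bookkeeping.
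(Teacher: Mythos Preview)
Your proposal is correct and follows essentially the same route as the paper: both rely on the tensor product identification $L^2(F)\cong\bigotimes_i L^2(\pi_i(F))$ and Theorem~\ref{ComponentSpectrum} applied to each factor. The paper's own proof is a one-line reference to this setup, whereas you spell out the computation $\mathcal{L}_j b=\lambda_{b_j}b$ explicitly and correctly flag that self-adjointness needs the coefficients of $P$ to be real (the paper works with $P\in\mathds{C}[X]$ but tacitly assumes this for the self-adjointness claim).
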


\begin{proof}
In view of the remark in the paragraph before the Theorem,
this is an immediate consequence of applying
Theorem \ref{ComponentSpectrum}
to the tensor product space, and by the construction of operator $\mathcal{L}$.
\end{proof}

\begin{cor}\label{heatKernelFunction}
  Given $P(X)\in\mathds{R}[X_1,\dots,X_d]$ such that $P(\mathcal{L})$ has only non-negative eigenvalues, each having only finite multiplicity, there exists an associated heat kernel function
  providing  a fundamental solution for the heat equation
  \[
\frac{\partial}{\partial t}u(x,t)+P(\mathcal{L})u(x,t)=0
\]
with $u(x,0)=u(x)\in L^2(F,\mu)$ for $t\ge0$.
\end{cor}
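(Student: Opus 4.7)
The plan is to build the fundamental solution directly from the spectral decomposition provided by Theorem \ref{L2-Spectrum}. Enumerate the orthonormal eigenbasis $\{b_\alpha\}$ of $L^2(F,\mu)$ with associated eigenvalues $\lambda_\alpha = P(\lambda_{b_{1,\alpha}},\dots,\lambda_{b_{d,\alpha}}) \ge 0$ (nonnegativity holds by hypothesis). For initial data $u_0 \in L^2(F,\mu)$ with Fourier expansion $u_0 = \sum_\alpha c_\alpha b_\alpha$, $c_\alpha = \langle u_0,b_\alpha\rangle$, define
\[
u(x,t) \;=\; \sum_\alpha c_\alpha\, e^{-\lambda_\alpha t}\, b_\alpha(x).
\]
Since each factor $e^{-\lambda_\alpha t}$ is bounded by $1$ and $\sum|c_\alpha|^2 = \|u_0\|_2^2 < \infty$, Parseval gives $L^2$-convergence for every $t\ge 0$, with $u(\cdot,0)=u_0$. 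This defines a strongly continuous contraction semigroup $e^{-tP(\mathcal{L})}$ on $L^2(F)$ and, because the $\lambda_\alpha$ have finite multiplicity, termwise differentiation in $t$ is justified on each finite-dimensional eigenspace.

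Next, form the candidate heat kernel
\[
K_t(x,y) \;=\; \sum_\alpha e^{-\lambda_\alpha t}\, b_\alpha(x)\,\overline{b_\alpha(y)},
\]
so that $u(x,t) = \int_F K_t(x,y)\, u_0(y)\, d\mu(y)$ once convergence is established. To obtain $K_t$ as a genuine function, I would show $\sum_\alpha e^{-2\lambda_\alpha t} < \infty$ for each $t>0$, which by Parseval amounts to $K_t \in L^2(F\times F)$. For the component operators, Theorem \ref{ComponentSpectrum} gives Kozyrev-wavelet eigenvalues of order $p^{n(1+\alpha_i)}$ with only $(p-1)p^{n-m}$ wavelets supported in a given disc of measure $p^{-m}$ at scale $n\ge m$; the remaining graph eigenfunctions contribute only finitely many eigenvalues per component. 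By Theorem \ref{L2-Spectrum}, the eigenvalues of $P(\mathcal{L})$ are the polynomial images $P(\lambda_{b_1},\dots,\lambda_{b_d})$ of these tuples, so the double exponential decay $e^{-2P(\cdots)t}$ dominates the polynomial multiplicity growth as soon as the top-degree part of $P$ is positive in the relevant directions, which is guaranteed here by the non-negativity of the spectrum.

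Finally, verify the heat equation: for $t>0$, the sequence $(c_\alpha \lambda_\alpha e^{-\lambda_\alpha t})_\alpha$ is square-summable because $\lambda_\alpha e^{-\lambda_\alpha t/2}$ is bounded, so $\partial_t u = -\sum_\alpha c_\alpha \lambda_\alpha e^{-\lambda_\alpha t} b_\alpha$ converges in $L^2$; by the spectral theorem this equals $-P(\mathcal{L})u$, and an analogous termwise argument applied to $K_t(x,\cdot)$ shows $K_t$ itself is a fundamental solution in the sense stated. The main obstacle I anticipate is the genuine function-theoretic convergence of $K_t$: merely knowing the spectrum is non-negative with finite multiplicities does not by itself force $\sum e^{-2\lambda_\alpha t} < \infty$, and controlling possible cancellations in $P(\lambda_{b_1},\dots,\lambda_{b_d})$ that could produce clusters of small eigenvalues is where careful use of the component-wise growth from Theorem \ref{ComponentSpectrum} will be needed; everything else is a routine spectral-theoretic manipulation.
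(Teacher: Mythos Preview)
Your outline is sound but proceeds along a different line from the paper. The paper's proof is a two-line pointwise argument: it writes $p(t,x,y)=\sum_b e^{-\lambda_b t}b(x)\overline{b(y)}$ and observes that for $x\neq y$ only finitely many basis elements $b$ from Theorem \ref{L2-Spectrum} satisfy $b(x)b(y)\neq 0$, because in any coordinate where $\xi_i\neq\eta_i$ only finitely many Kozyrev wavelets (and finitely many graph eigenfunctions) have support containing both points; convergence on the diagonal is then attributed to the finite-multiplicity hypothesis. You instead aim for $K_t\in L^2(F\times F)$ via the Hilbert--Schmidt condition $\sum_\alpha e^{-2\lambda_\alpha t}<\infty$, which requires the quantitative eigenvalue growth estimates you sketch from Theorem \ref{ComponentSpectrum}. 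The paper's route is shorter because it exploits the \emph{support structure} of the tensor eigenbasis directly and never needs a trace bound; your route demands more work but would deliver the stronger conclusion that the semigroup is Hilbert--Schmidt for $t>0$. One remark in your favour: the paper's off-diagonal claim, read literally, only gives finitely many nonzero terms when \emph{all} coordinates of $x$ and $y$ differ; if some coordinates coincide there are still infinitely many nonvanishing tensor factors in those slots, so some growth control of the kind you propose is implicitly needed there too. The obstacle you flag at the end---that mere non-negativity plus finite multiplicity need not force $\sum e^{-2\lambda_\alpha t}<\infty$ without using the explicit polynomial-in-exponential form of the $\lambda_\alpha$---is real, and is exactly where your argument would need to be completed.
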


\begin{proof}
  The prospective heat kernel function is given by
  \[
p(t,x,y)=\sum\limits_b e^{-\lambda_b t}b(x)b(y)\,,
\]
where $b$ runs through the eigenbasis of Theorem \ref{L2-Spectrum}. In order to show that this sum converges for $x,y\in F$, assume first that $x\neq y$. In this case, there are only finitely many functions $b$ of the eigenbasis such that $b(x)b(y)\neq 0$. Since each eigenvalue has only finite multiplicity, the convergence now follows also if $x=y$. This proves the assertion.
\end{proof}

An example for Corollary \ref{heatKernelFunction} is given by a polynomial $P(X)$ of degree $2$ such that the coefficients of the homogeneous part $P_2(X)$ form a positive definite matrix. This is a special case of an elliptic operator which will be dealt with in Section \ref{EllipticDivergenceOp}.


\section{Boundary Conditions and Sobolev Spaces}

Boundary conditions w.r.t.\ the  operators $\mathcal{L}_i$ are defined, as well as corresponding Sobolev spaces are introduced.

\subsection{Boundary conditions}

Define the $i$-th component boundaries of an open subset $U\subset F$ as
\begin{align*}
\delta_i U&=\mathset{\eta_i\in\pi_i(F\setminus U)\mid\exists \xi_i\in \pi_i(U)\colon L_i(\xi_i,\eta_i)\neq 0}
\end{align*}
and the $i$-th component boundary condition for a function $u\colon F\to\mathds{R}$ as
\[
u|_{\delta_i U}(x):=u(x)\int_{\delta_i U}L_i(\xi_i,\eta_i)\,d\eta_i=0
\]
for all $x=(\xi_1,\dots,\xi_d)\in U$. 
\newline

The boundary $U$ w.r.t.\ $\mathcal{L}$ is defined as
\[
\delta U=\bigsqcup
\limits_{i=1}^d (U_1\sqcup\delta_1 U)\times\dots\times
\delta_i U\times\dots\times (U_d\sqcup\delta_d U)\subset F\,,
\]
and we say that $u$ vanishes on the boundary $\delta U$, if
\[
u|_{\delta_iU}(x)=0
\]
for all $i=1,\dots,d$ and all $x\in U$. In this case, also write
\[
u|_{\delta U}(x)=0
\]
for all $x\in U$. Write also
\[
\closure_\delta U:=U\sqcup \delta U
\]
for the $\delta$-closure of $U$. It is determined by the connectivity structure of $F$ imposed by the kernel functions $L_1,\dots,L_d$. 

\begin{Lemma}\label{clopen}
The set $\closure_\delta U$ is  closed-open in $F$.
\end{Lemma}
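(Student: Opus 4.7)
The plan is to exhibit $\closure_\delta U$ as the intersection of $F$ with a clopen product set in $\mathds{Q}_p^d$ built from finitely many $p$-adic discs. For each coordinate $i$, I would define the finite collection
\[
S_i = \bigl\{V \in \mathcal{U}_i \,:\, V \cap \pi_i(U) \neq \emptyset,\ \text{or}\ \exists V' \in \mathcal{U}_i \text{ with } V' \cap \pi_i(U) \neq \emptyset \text{ and } w_i(V, V') > 0\bigr\},
\]
and set $W_i = \bigcup_{V \in S_i} V$, which is a finite union of clopen discs in $\mathds{Q}_p$, hence itself clopen. Consequently $W_1 \times \cdots \times W_d$ is clopen in $\mathds{Q}_p^d$, and since $F$ is compact open, the intersection $(W_1 \times \cdots \times W_d) \cap F$ is clopen in $F$.

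The computational core is to identify $\delta_i U$ with $W_i \cap \pi_i(F \setminus U)$ by case analysis on which branch of the definition of $L_i$ applies. If $U_i(\eta_i) \cap \pi_i(U) \neq \emptyset$, the intersection is a nonempty open subset of a $p$-adic disc and hence uncountable, so one may choose $\xi_i \neq \eta_i$ in it and obtain $L_i(\xi_i, \eta_i) = \absolute{\xi_i - \eta_i}_p^{-\alpha_i} > 0$. If $U_i(\eta_i) \cap \pi_i(U) = \emptyset$, then $L_i(\xi_i, \eta_i) \neq 0$ for some $\xi_i \in \pi_i(U)$ becomes the graph-adjacency condition $w_i(U_i(\xi_i), U_i(\eta_i)) > 0$ for some disc $U_i(\xi_i)$ meeting $\pi_i(U)$. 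Together these show $\eta_i \in \delta_i U$ precisely when $\eta_i \in W_i \cap \pi_i(F \setminus U)$, and since $\pi_i(U) \subseteq W_i$ one deduces $\pi_i(U) \cup \delta_i U = W_i \cap \pi_i(F)$.

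It then remains to verify the identity $\closure_\delta U = (W_1 \times \cdots \times W_d) \cap F$. The inclusion $\subseteq$ follows summand by summand: $U \subseteq \pi_1(U) \times \cdots \times \pi_d(U) \subseteq W_1 \times \cdots \times W_d$, and each summand $(\pi_1(U) \cup \delta_1 U) \times \cdots \times \delta_i U \times \cdots \times (\pi_d(U) \cup \delta_d U)$ of $\delta U$ factor-wise lies in $W_1 \times \cdots \times W_d$. For the reverse direction, take $x \in (W_1 \times \cdots \times W_d) \cap F$ with $x \notin U$; then $x \in F \setminus U$ gives $\pi_i(x) \in \pi_i(F \setminus U)$, and combined with $\pi_i(x) \in W_i$ one obtains $\pi_i(x) \in W_i \cap \pi_i(F \setminus U) = \delta_i U$ for every $i$, placing $x$ in each of the product summands making up $\delta U$.

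The main obstacle I anticipate is the case analysis of $\delta_i U$, in particular because $\pi_i(U)$ and $\pi_i(F \setminus U)$ can overlap, so $\delta_i U$ itself need not be disjoint from $\pi_i(U)$; the description of $W_i = \pi_i(U) \cup \delta_i U$ must carefully accommodate this. The finiteness of each $\mathcal{U}_i$ (that $N_i \in \mathds{N}$) is essential, since it guarantees that $W_i$, a finite union of clopen discs, is itself clopen in $\mathds{Q}_p$.
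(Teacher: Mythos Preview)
Your argument is correct and considerably more detailed than the paper's, which simply asserts ``This follows immediately by construction'' without further elaboration. Your explicit identification of $\closure_\delta U$ with the product $W_1 \times \cdots \times W_d$ of finite unions of coordinate discs makes the clopen property transparent, and your case analysis of $\delta_i U$ via the two branches of $L_i$ is careful and accurate. One minor simplification: since the paper's setup gives $F = \prod_{i=1}^d \pi_i(F)$ (as $F$ is the full product of the coordinate disc-unions $\bigsqcup_k B_{i,k}$), the intersection with $F$ is redundant once you have $W_i \subseteq \pi_i(F)$, so in fact $\closure_\delta U = W_1 \times \cdots \times W_d$ outright.
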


\begin{proof}
This follows immediately by construction.
\end{proof}

Use the notation
\[
\mathcal{L}_{i,\phi}=\mathcal{L}_{X_i,\alpha_i;\phi}\,,
\]
i.e.
\[
\mathcal{L}_{i,\phi}u(x)=
\int_{\pi_i(F)}L_i(\xi_i,\eta_i)(u(x)-u(\xi_1,\dots,\eta_i,\dots,\xi_d))\phi(\xi_1,\dots,\eta_i,\dots,\xi_d)\,d\eta_i\,.
\]
There is a  Leibniz-like rule: 

\begin{Lemma}\label{LeibnizLike}
It holds true that
\[
\mathcal{L}_i(u\phi)=\mathcal{L}_{i,\phi}u+u\mathcal{L}_i\phi
\]
for $i=1,\dots,d$, whenever the two terms on the right-hand side are defined.
\end{Lemma}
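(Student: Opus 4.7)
The plan is to expand $(u\phi)(x) - (u\phi)(x^{(i)}(\eta_i))$ inside the integrand defining $\mathcal{L}_i(u\phi)(x)$, where I abbreviate $x^{(i)}(\eta_i) = (\xi_1,\dots,\eta_i,\dots,\xi_d)$, and then recognise the two pieces on the right-hand side.

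First I would write
\[
(u\phi)(x) - (u\phi)(x^{(i)}(\eta_i)) = u(x)\bigl(\phi(x)-\phi(x^{(i)}(\eta_i))\bigr) + \bigl(u(x)-u(x^{(i)}(\eta_i))\bigr)\phi(x^{(i)}(\eta_i)),
\]
obtained by adding and subtracting the crossed term $u(x)\phi(x^{(i)}(\eta_i))$. The choice to keep $\phi$ evaluated at the shifted point $x^{(i)}(\eta_i)$ in the second summand is dictated precisely by the definition of $\mathcal{L}_{i,\phi}$; the alternative crossed term would not produce the right factor.

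Next I would multiply by $L_i(\xi_i,\eta_i)$ and integrate over $\pi_i(F)$. Under the hypothesis that both terms on the right-hand side of the claimed identity are defined, each of the two resulting integrals converges absolutely, so the integral of the sum splits into the sum of the integrals. Pulling the constant (in $\eta_i$) factor $u(x)$ out of the first integral yields $u(x)\,\mathcal{L}_i\phi(x)$ by the definition of $\mathcal{L}_i$. The second integral is literally $\mathcal{L}_{i,\phi}u(x)$ by the definition given in the excerpt.

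There is no serious obstacle: the only thing to watch is that the splitting into two integrals is legitimate, which is why the statement carries the proviso ``whenever the two terms on the right-hand side are defined.'' This guarantees absolute integrability of each summand separately, and Fubini/linearity of the integral then finishes the argument.
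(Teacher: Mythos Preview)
Your proof is correct and follows essentially the same route as the paper: both add and subtract the cross term $u(x)\phi(x^{(i)}(\eta_i))$ inside the integrand, then identify the two resulting integrals as $u\,\mathcal{L}_i\phi$ and $\mathcal{L}_{i,\phi}u$. Your explicit remark about the proviso guaranteeing that the integral may be split is a small but welcome clarification that the paper leaves implicit.
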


\begin{proof}
Use the notation
\[
g(\hat{\eta_i}):=g(\xi_1,\dots,\eta_i,\dots,\xi_d)
\]
and calculate
\begin{align*}
\mathcal{L}_i(u\phi)(x)&=
\int_{\pi_i(F)}L_i(\xi_i,\eta_i)(u(\hat{\xi_i})\phi(\hat{\xi_i})-u(\hat{\eta_i})\phi(\hat{\eta_i}))\,d\eta_i
\\
&=\int_{\pi_i(F)}L_i(\xi_i,\eta_i)
(u(\hat{\xi_i})\phi(\hat{\xi_i})
-u(\hat{\xi_i})\phi(\hat{\eta_i})+u(\hat{\xi_i})\phi(\hat{\eta_i})
-u(\hat{\eta_i})\phi(\hat{\eta_i})
)\,d\eta_i
\\
&=
u(\hat{\xi_i})\int_{\pi_i(F)}L_i(\xi_i,\eta_i)(\phi_i(\hat{\eta_i})-\phi(\hat{\xi_i}))\,d\eta_i
\\
&+\int_{\pi_i(F)}L_i(\xi_i,\eta_i)(u(\hat{\xi_i})-u(\hat{\eta_i}))\phi(\hat{\eta_i})\,d\eta_i
\\
&=u(x)\mathcal{L}_i\phi(x)+
\mathcal{L}_{i,\phi}u(x)\,,
\end{align*}
which implies the assertion.
\end{proof}

\subsection{Sobolev spaces}

Let $U\subseteq F$ be an open subspace.
Define the  following Sobolev spaces:
\begin{align*}
  W^{k,q}(U)&=\mathset{f\in L^1(U)\mid\forall\underline{\ell}\in\mathds{Z}^d\colon\absolute{\underline{\ell}}\le k\;\Rightarrow\;\norm{\mathcal{L}^{\underline{\ell}}f}_{L^q(U)}<\infty}
  \\
  W^{k,q}_0(U)&=\mathset{f\in W^{k,q}(\closure_\delta U)\mid f|_{\delta U}=0}
\end{align*}
for $q>0$, where $\closure_F$ is the closure operator on subsets of $F$,
and with
\[
\mathcal{L}^{\underline{\ell}}:=\mathcal{L}_{X,a}^{\underline{\ell}}=\mathcal{L}_{X_1,\alpha_1}^{\ell_1}\cdots\mathcal{L}_{X_d,\alpha_d}^{\ell_d}
\]
with $\underline{\ell}\in\mathds{Z}^d$.
The Sobolev norm on $W^{k.q}(U)$ is defined as
\[
\norm{f}_{W^{k,q}(U)}=\left(\sum\limits_{\absolute{\underline{\ell}}\le k}
\norm{\mathcal{L}^{\underline{\ell}}f}_{L^q(U)}^q\right)^{\frac{1}{q}}
\]
just like in the classical case.

\begin{Proposition}\label{SobolevClosedSubspace}
The Sobolev spaces $W^{k,q}(U)$ are Banach spaces for $1\le q<\infty$ and $k\ge0$, and the space $W^{k,q}_0(U)$ is a closed subspace of $W^{k,q}(\closure_\delta U)$.
\end{Proposition}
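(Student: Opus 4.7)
The plan is to split the argument into two essentially independent parts: the Banach-space claim for $W^{k,q}(U)$, and the closed-subspace claim for $W^{k,q}_0(U)$. For the first part, I would begin with the routine norm axioms: positive definiteness and homogeneity transfer immediately from the corresponding $L^q$-norm properties of each summand $\mathcal{L}^{\underline{\ell}} f$, while the triangle inequality follows by combining Minkowski in $L^q(U)$ with Minkowski in $\ell^q$ over the finite index set $\{\underline{\ell}\in\mathds{N}^d\,:\,|\underline{\ell}|\le k\}$.

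For completeness, I would take a Cauchy sequence $(f_n)\subset W^{k,q}(U)$. By the definition of the norm, each sequence $(\mathcal{L}^{\underline{\ell}} f_n)_n$ with $|\underline{\ell}|\le k$ is Cauchy in $L^q(U)$ and hence converges there to some $g_{\underline{\ell}}$; set $f:=g_{\underline{0}}$. The crucial step is to verify $\mathcal{L}^{\underline{\ell}} f=g_{\underline{\ell}}$ for each admissible $\underline{\ell}$. The natural way is to test against an arbitrary locally constant $\phi\in\mathcal{D}(U)$, use the formal self-adjointness of $\mathcal{L}^{\underline{\ell}}$ (which follows by iterated Fubini exactly as in the proof of Lemma \ref{SchwartzLemma}) to write
\[
\int_U \bigl(\mathcal{L}^{\underline{\ell}} f_n\bigr)\,\phi\,dx = \int_U f_n\,\bigl(\mathcal{L}^{\underline{\ell}} \phi\bigr)\,dx,
\]
and pass to the limit on both sides using $L^q$-convergence: the left tends to $\int_U g_{\underline{\ell}}\phi\,dx$, and the right to $\int_U f\cdot\mathcal{L}^{\underline{\ell}}\phi\,dx$, since $\mathcal{L}^{\underline{\ell}}\phi$ is bounded on the compact $U$ (the iterated kernel action preserves the locally constant class on each disc of definition) and hence lies in $L^{q'}(U)$. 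This identifies $g_{\underline{\ell}}$ with $\mathcal{L}^{\underline{\ell}} f$ distributionally, and since $g_{\underline{\ell}}\in L^q(U)$ the equality is in $L^q$, giving $f\in W^{k,q}(U)$ and $f_n\to f$ in the Sobolev norm.

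For the second part, I would take $(f_n)\subset W^{k,q}_0(U)$ with $f_n\to f$ in $W^{k,q}(\closure_\delta U)$. In particular $f_n\to f$ in $L^q(\closure_\delta U)$, and after extracting a subsequence $f_n\to f$ almost everywhere. Writing $I_i(\xi_i):=\int_{\delta_i U}L_i(\xi_i,\eta_i)\,d\eta_i\ge 0$, the boundary condition reads $f_n(x)\,I_i(\xi_i)=0$ almost everywhere on $U$ for every $i=1,\dots,d$, and this identity is preserved under pointwise a.e.\ limits. Hence $f(x)\,I_i(\xi_i)=0$ almost everywhere for each $i$, i.e.\ $f|_{\delta U}=0$, so $f\in W^{k,q}_0(U)$ and closedness follows.

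The main obstacle is the self-adjoint duality step that identifies $g_{\underline{\ell}}$ with $\mathcal{L}^{\underline{\ell}}f$: one must carefully justify that the iterated kernel action of $\mathcal{L}^{\underline{\ell}}$ on a locally constant $\phi$ produces a genuine function in $L^{q'}(U)$, which amounts to inspecting the iterated kernel implicit in Lemma \ref{SchwartzLemma} and checking that the singular diagonal part $|\xi_i-\eta_i|_p^{-\alpha_i}$ integrates against $\phi$'s jumps to yield a bounded output on the compact domain. Once that is in place, the remaining arguments are direct transcriptions of the classical Sobolev completeness and kernel-of-continuous-functional reasoning to the present $p$-adic setting.
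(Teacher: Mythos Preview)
Your completeness argument for $W^{k,q}(U)$ is essentially the paper's own: both take a Cauchy sequence, pass each $\mathcal{L}^{\underline{\ell}}f_n$ to an $L^q$-limit, and identify the limits by pairing against test functions via self-adjointness. The paper cites Theorem~\ref{L2-Spectrum} for the self-adjointness, whereas you trace it back to the Fubini computation of Lemma~\ref{SchwartzLemma}; either reference works, and your explicit remark that $\mathcal{L}^{\underline{\ell}}\phi$ lands in $L^{q'}$ fills in a detail the paper leaves implicit.

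Where you diverge is in the closed-subspace part. The paper dispatches this in one line, asserting that $W^{k,q}_0(U)$ is ``obtained by restriction to an open subspace $U\subseteq\closure_\delta U$'' and concluding closedness from that. You instead argue directly: extract an a.e.\ convergent subsequence from the $L^q$-limit and observe that the pointwise condition $f_n(x)\,I_i(\xi_i)=0$ survives the passage to the limit. Your route is more transparent and does not rely on any structural interpretation of the boundary condition as a support constraint; the paper's phrasing implicitly uses that the functions $I_i(\xi_i)=\int_{\delta_i U}L_i(\xi_i,\eta_i)\,d\eta_i$ are locally constant (they involve only the graph weights $w_i$), so that the vanishing locus is clopen and $W^{k,q}_0(U)$ is literally the subspace of functions supported in a fixed clopen set. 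Both arguments are valid, but yours is the more robust and self-contained of the two.
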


\begin{proof}
  Following the proof of \cite[Theorem 5.2.2]{evans}, first observe that $\norm{\cdot}_{W^{k,q}(U)}$ is indeed a norm on $W^{k,q}(U)$. In order to see completeness, let $u_n\in W^{k,q}(U)$ be a Cauchy sequence. Then $\mathcal{L}^{\underline{\ell}}u_n$ is a Cauchy sequence in $L^q(U)$ for each $\absolute{\underline{\ell}}\le k$. Thus, there exists a sequence of functions $f_{\underline{\ell}}\in L^q(U)$ such that
  \[
\mathcal{L}^{\underline{\ell}}u_n\to
  f_{\underline{\ell}}
  \]
  in $L^2(U)$ for each $\absolute{\underline{\ell}}\le k$. In particular, it holds true that
  $u:=f_{(0,\dots,0)}\in L^q(U)$ is the limit of $u_n$. In order to now see that $u\in W^{k,q}(U)$ and $\mathcal{L}^{\underline{\ell}}u=f_{\underline{\ell}}$ for $\absolute{\underline{\ell}}\le k$, observe that for any $\phi\in\mathcal{D}(U)$, it holds true that
  \begin{align*}
\int_U\mathcal{L}^{\underline{\ell}}u\phi\,dx&=
    \int_U u\mathcal{L}^{\underline{\ell}}\phi\,dx=\lim\limits_{n\to\infty}
    \int_Uu_n\mathcal{L}^{\underline{\ell}}\phi\,dx
    \\
   & =\lim\limits_{n\to\infty}\int_U\mathcal{L}^{\underline{\ell}}u_n\phi\,dx
    =\int_Uf_{\underline{\ell}}\phi\,dx\,,
  \end{align*}
  where the self-adjointness of $\mathcal{L}^{\underline{\ell}}$ has been used, cf.\ Theorem \ref{L2-Spectrum}. The first assertion now  follows.
  It is also valid for $W^{k,q}(\closure_\delta U)$.
  Since $W^{k,q}(U)_0$ is obtained by restriction to an open subspace $U\subseteq\closure_\delta U$, the second assertion also follows. This proves the proposition.
\end{proof}

\begin{cor}\label{SobolevHilbertSpace}
  The Sobolev space $W^{k,2}(U)$ is a Hilbert space for $k\in\mathds{N}$.
\end{cor}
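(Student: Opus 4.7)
The plan is to leverage Proposition \ref{SobolevClosedSubspace} directly: it already gives completeness of $W^{k,2}(U)$ as a Banach space when one takes $q=2$, so the only remaining task is to exhibit an inner product that induces the Sobolev norm $\norm{\cdot}_{W^{k,2}(U)}$.

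First I would define, for $f,g\in W^{k,2}(U)$, the sesquilinear pairing
\[
\langle f,g\rangle_{W^{k,2}(U)}=\sum\limits_{\absolute{\underline{\ell}}\le k}\langle\mathcal{L}^{\underline{\ell}}f,\mathcal{L}^{\underline{\ell}}g\rangle_{L^2(U)},
\]
where each summand is the standard $L^2(U)$-inner product of the (weak) derivatives $\mathcal{L}^{\underline{\ell}}f$ and $\mathcal{L}^{\underline{\ell}}g$, which are themselves elements of $L^2(U)$ by the very definition of $W^{k,2}(U)$. Then I would routinely check that this pairing is linear in the first argument, conjugate linear in the second, Hermitian symmetric, and positive; non-degeneracy follows from the $\underline{\ell}=(0,\dots,0)$ term together with positive-definiteness of the $L^2$-inner product.

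Next I would verify the compatibility identity
\[
\langle f,f\rangle_{W^{k,2}(U)}=\sum\limits_{\absolute{\underline{\ell}}\le k}\norm{\mathcal{L}^{\underline{\ell}}f}_{L^2(U)}^2=\norm{f}_{W^{k,2}(U)}^2,
\]
so the inner product indeed induces the Sobolev norm. Combining this with the completeness of $W^{k,2}(U)$ as a normed space, established in Proposition \ref{SobolevClosedSubspace} for $q=2$, yields the Hilbert space structure.

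There is no real obstacle here: every step is a formal verification, since the self-adjointness of the $\mathcal{L}_i$ (used in the completeness argument) and the Banach space structure have already been established. The statement is essentially a bookkeeping corollary extracting the $q=2$ case and naming the canonical inner product.
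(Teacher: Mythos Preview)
Your proposal is correct and follows essentially the same approach as the paper: both observe that the Sobolev norm for $q=2$ arises from the canonical inner product $\sum_{\absolute{\underline{\ell}}\le k}\langle\mathcal{L}^{\underline{\ell}}f,\mathcal{L}^{\underline{\ell}}g\rangle_{L^2(U)}$ and then invoke the completeness from Proposition~\ref{SobolevClosedSubspace}. Your version is simply more explicit in writing out the inner product and verifying its properties, whereas the paper leaves this as a one-line remark.
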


\begin{proof}
  The Sobolev norm on $W^{1,2}(U)$ takes the form
  \[
  \norm{f}_{W^{1,2}(U)}^2=
  \sum\limits_{\underline{\ell}\le k}
  \norm{\mathcal{L}^{\underline{\ell}}f}_{L^2(U)}^2
\]
  which comes from a suitable inner product on $W^{k,2}(U)$ for $k\in\mathds{N}$.
  This proves the assertion.
  \end{proof}

\begin{Proposition}[Poincar\'e Inequality]\label{PoincareInequality}
Let $u\in W^{1,2}(U)$ for $1\le q\le\infty$. Then there exists some $C>0$ such that
\[
\norm{u}_{L^2}\le C\norm{\mathcal{L}_iu}_{L^2}
\]
for $i=1,\dots,d$.
\end{Proposition}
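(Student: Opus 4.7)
The plan is to derive the Poincar\'e inequality from the explicit spectral decomposition of $\mathcal{L}_i$ provided by the earlier theorems, turning the inequality into a statement about a spectral gap. First, I would invoke Theorem \ref{ComponentSpectrum}, extended to the open subset $U \subseteq F$ as in Remark \ref{subsetting}, to obtain an orthonormal eigenbasis of $L^2(\pi_i(U), \mu_i)$ for $\pi_{i,*}\mathcal{L}_i$. Tensoring over the $d$ coordinates as in Theorem \ref{L2-Spectrum}, this produces an orthonormal eigenbasis $\{b_k\}$ of $L^2(U)$ for $\mathcal{L}_i$ with eigenvalues $\lambda_k^{(i)} \ge 0$ inherited from the eigenvalue of the $i$-th tensor factor $b_{k,i}$ under $\pi_{i,*}\mathcal{L}_i$.

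Next, I would expand $u = \sum_k c_k b_k$ and apply Parseval to reduce the inequality to showing
\[
\sum_k |c_k|^2 (\lambda_k^{(i)})^2 \ge \frac{1}{C^2} \sum_k |c_k|^2,
\]
which amounts to finding a positive uniform lower bound $\lambda_*^{(i)} > 0$ for those eigenvalues $\lambda_k^{(i)}$ actually supporting the expansion of $u$. I would establish this lower bound separately for the two classes of eigenfunctions listed in Theorem \ref{ComponentSpectrum}. The Kozyrev wavelet eigenvalues at the coarsest admissible scale $n = m$ take the strictly positive form $p^{m(1+\alpha_i)} + \sum_{V \ne U_i(a)} w_i(U_i(a), V)\mu_i(V)$ and grow with $n$, so since $\mathcal{U}_i$ is finite the infimum over Kozyrev eigenvalues is already a positive constant. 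The graph eigenfunction part contributes only finitely many (at most $N_i$) eigenvalues, and its smallest strictly positive member is likewise a positive constant. Setting $\lambda_*^{(i)}$ to the smaller of these two bounds and $C = \max_i 1/\lambda_*^{(i)}$ then finishes the argument.

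The main obstacle I anticipate concerns the kernel of $\mathcal{L}_i$: namely, those product eigenfunctions whose $i$-th factor is the constant graph eigenfunction. These sit in $\ker \mathcal{L}_i$ and the inequality cannot hold if their coefficients fail to vanish, so the statement cannot be true on all of $W^{1,2}(U)$ as literally written. I would therefore interpret the hypothesis as implicitly requiring $u$ to lie in $W^{1,2}_0(U)$ (or at least orthogonal to $\ker \mathcal{L}_i$), and the delicate step will be to show that the component boundary condition $u|_{\delta_i U} = 0$ forces the corresponding kernel coefficients $c_k$ to be zero, using that the outward boundary weight $\int_{\delta_i U} L_i(\xi_i, \eta_i)\,d\eta_i$ is strictly positive on enough of $\pi_i(U)$ to kill the constant-in-$\xi_i$ component of $u$.
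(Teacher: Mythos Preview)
Your approach is essentially the same as the paper's: expand $u$ in the tensor-product eigenbasis for $\mathcal{L}_i$ furnished by Theorems~\ref{ComponentSpectrum} and~\ref{L2-Spectrum}, apply Parseval, and reduce the inequality to a spectral-gap statement. The paper's proof is in fact briefer than yours---it simply records the eigenbasis expansion, observes that the eigenvalues grow unboundedly, and writes down the Parseval inequality $\sum_\psi|\alpha_\psi|^2\le C\sum_\psi|\alpha_\psi|^2|\lambda_\psi|^2$---without separating the wavelet and graph eigenvalues or addressing the kernel issue you raise. Your concern about $\ker\mathcal{L}_i$ is well-founded: the paper's own argument is silent on this point, so you are being more careful than the source here rather than missing something it supplies.
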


\begin{proof}
  Using Theorem \ref{ComponentSpectrum},
  it follows from Theorem \ref{L2-Spectrum} that the function $u$ has an  expansion over an orthonormal  eigenbasis $\psi$ w.r.t.\ $\mathcal{L}_i$
as
\[
u=\sum\limits_{\psi}\alpha_\psi\psi
\]
and the eigenvalues $\lambda_\psi$ associated with $\psi$ are unboundedly increasing with shrinking support of the wavelet eigenfunctions in the $i$-th coordinate. Hence,
\[
\norm{u}_{L^2}^2=\sum\limits_{\psi}\absolute{\alpha_\psi}^2\le C
\sum\limits_\psi\absolute{\alpha_\psi}^2\absolute{\lambda_\psi}^2=
\norm{\mathcal{L}_iu}_{L^2}^2
\]
for some $C>0$, as asserted.
\end{proof}

\begin{Lemma}\label{boundaryCondDist}
Let $u\in W^{1,2}(\closure_\delta U)$. The boundary condition $u|_{\delta U}=0$ in the distributional sense is equivalent with
\[
\int_U\mathcal{L}_{i}(u\phi)(x)\,dx=0
\]
for all $\phi\in \mathcal{D}(U)$ and all $i=1,\dots,d$.
\end{Lemma}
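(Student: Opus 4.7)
The plan is to reduce the lemma to establishing the identity
\[
\int_U \mathcal{L}_i(u\phi)(x)\,dx \;=\; \int_U \phi(x)\, u|_{\delta_i U}(x)\,dx
\]
for every $\phi \in \mathcal{D}(U)$ and every $i$. Once this identity is in hand, the forward implication of the lemma is immediate (the right-hand side vanishes pointwise), and the reverse follows from the density of $\mathcal{D}(U)$ in $L^2(U)$, which forces $u|_{\delta_i U}$ to vanish almost everywhere on $U$, i.e., in the distributional sense.

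To derive the identity I would first expand
\[
\int_U \mathcal{L}_i(u\phi)(x)\,dx = \int_U \int_{\pi_i(F)} L_i(\xi_i,\eta_i)\bigl((u\phi)(x) - (u\phi)(\hat{\eta_i})\bigr)\,d\eta_i\,dx,
\]
with $u\phi$ extended by zero off $\closure_\delta U$; this is consistent, since $\phi \in \mathcal{D}(U)$ is supported in $U$ and $u$ lives on $\closure_\delta U$. By the very definition of $\delta_i U$, for $\xi_i \in \pi_i(U)$ the kernel $L_i(\xi_i,\cdot)$ is supported in $\pi_i(U) \sqcup \delta_i U$, and this splits the inner integration cleanly into a boundary piece ($\eta_i \in \delta_i U$) and an interior piece ($\hat{\eta_i} \in U$). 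On the boundary piece $\phi(\hat{\eta_i})=0$, so this contribution collapses to
\[
\int_U u(x)\phi(x) \int_{\delta_i U} L_i(\xi_i,\eta_i)\,d\eta_i\,dx = \int_U \phi\,u|_{\delta_i U}\,dx.
\]
On the interior piece, the region of $(x,\hat{\eta_i})$ is symmetric under the swap exchanging only the $i$-th coordinates of $x$ and $\hat{\eta_i}$, the kernel $L_i$ is symmetric, and the integrand $(u\phi)(x) - (u\phi)(\hat{\eta_i})$ is antisymmetric; hence by Fubini the interior contribution equals its own negative and vanishes.

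The step I expect to be the main obstacle is justifying this two-piece split: a hybrid case $\eta_i \in \pi_i(U)$ with $\hat{\eta_i} \notin U$ would in principle contribute a spurious term that the pointwise boundary condition $u|_{\delta_i U}=0$ cannot absorb. This is controlled by the hierarchical polydisc structure underlying $\mathcal{U}$ and the $\mathcal{L}_i$: the relevant open sets $U$ are built from unions of polydiscs in $\mathcal{U}$, so the $i$-th coordinate slices of $U$ are compatible with $\pi_i(U)$ and the hybrid case does not arise. Once this compatibility is verified, the identity, and hence the claimed equivalence, follow directly.
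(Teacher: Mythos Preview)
Your argument and the paper's are essentially the same: both establish the identity
\[
\int_U \mathcal{L}_i(u\phi)\,dx = \int_U \phi(x)\,u|_{\delta_i U}(x)\,dx
\]
and then read off the equivalence. The only difference is packaging: where you split the inner integral into an interior piece (killed by the symmetry of $L_i$ and Fubini) and a boundary piece, the paper compresses the interior cancellation into the single self-adjointness step $\int_U \mathcal{L}_i(u\phi)\,dx = \langle u\phi,\mathcal{L}_i 1_U\rangle$ and then computes $\mathcal{L}_i 1_U$ on $U$. Your symmetry argument is precisely the content of that self-adjointness on $L^2(F)$, so the two routes coincide.

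The ``hybrid case'' you flag (namely $\eta_i\in\pi_i(U)$ but $\hat{\eta_i}\notin U$) is exactly the point at which the paper passes from $\{\eta_i:\hat{\eta_i}\notin U\}$ to $\pi_i(F\setminus U)$ without comment; so your caution there is well placed and matches the implicit structural assumption in the paper's computation rather than introducing a new obstacle.
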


\begin{proof}
It holds true that
\begin{align*}
\int_U\mathcal{L}_{i}&(u\phi)(x)\,dx
=\langle u\phi,\mathcal{L}_{i}1_U\rangle
\\
&=\int_U\int_{\pi_i(F\setminus U)}u(x)L_i(\xi_i,\eta_i)\,d\eta_i\,\phi(x)\,dx
\\
&=\int_U u(x)\int_{\delta_iU}L_i(\xi_i,\eta_i)\,d\eta_i\,\phi(x)\,dx
\\
&=\int_U u|_{\delta_i U}(x)\phi(x)\,dx\,,
\end{align*}
whose vanishing is equivalent with the boundary condition for $u$ in the distributional sense. This proves the assertion.
\end{proof}

Observe that, if $u|_{\delta_i U}=0$ in the distributional sense, it follows that 
\begin{align*}
\int_U\mathcal{L}_{i,\phi}u(x)\,dx
=-\int_Uu(x)\mathcal{L}_i\phi(x)\,dx
=-\langle u,\mathcal{L}_i\phi\rangle
=-\langle\mathcal{L}_iu,\phi\rangle
\end{align*}
for $\phi\in \mathcal{D}(U)$.
\newline

Define the operator $\pi_{i,*}\mathcal{A}_i\colon \mathcal{D}(F)\to\mathcal{D}(\pi_i(F))$ on functions $u\colon F\to\mathds{R}$ as
\[
\left[(\pi_{i,*}\mathcal{A}_i)u\right](\eta_i)
=\int_UL_i(\xi_i,\eta_i)u(x)\,dx
\]
for 
$x=(\xi_1,\dots,\xi_d)\in U$, $\eta_i\in F$, and $i=1,\dots,d$. This allows to imitate a divergence theorem as follows:

\begin{thm}[$p$-adic Divergence Theorem]\label{divergence_p}
It holds true that
\[
\int_U \mathcal{L}_if(x)\,dx
=\int_{\delta_iU}\left[(\pi_{i,*}\mathcal{A}_i)f\right](\eta_i)
\,d\eta_i
\]
for $i=1,\dots,d$.
\end{thm}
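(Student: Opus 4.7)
My plan is to imitate the computation in the proof of Lemma~\ref{boundaryCondDist}, exploiting the self-adjointness of $\mathcal{L}_i$ (Theorem~\ref{ComponentSpectrum}) together with the symmetry of the kernel~$L_i$. I would first write
\[
\int_U \mathcal{L}_i f(x)\,dx = \langle \mathds{1}_U,\mathcal{L}_i f\rangle_{L^2(F)} = \langle \mathcal{L}_i \mathds{1}_U,f\rangle_{L^2(F)},
\]
which reduces the problem to the pointwise evaluation of $\mathcal{L}_i \mathds{1}_U(x)$ and the extraction of its contribution on $U$.

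For $x\in U$, expanding the definition~(\ref{componentOp}) gives
\[
\mathcal{L}_i \mathds{1}_U(x) = \int_{\pi_i(F)} L_i(\xi_i,\eta_i)\bigl(1 - \mathds{1}_U(\xi_1,\dots,\eta_i,\dots,\xi_d)\bigr)\,d\eta_i,
\]
whose $\eta_i$-integrand is supported in $\pi_i(F\setminus U)$. By the very definition of $\delta_i U$, the kernel $L_i(\xi_i,\cdot)$ vanishes identically on $\pi_i(F\setminus U)\setminus\delta_i U$ whenever $\xi_i\in\pi_i(U)$, hence $\mathcal{L}_i\mathds{1}_U(x)=\int_{\delta_i U} L_i(\xi_i,\eta_i)\,d\eta_i$ for $x\in U$. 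Substituting back and applying Fubini produces
\[
\int_U \mathcal{L}_i f(x)\,dx = \int_U f(x)\int_{\delta_i U} L_i(\xi_i,\eta_i)\,d\eta_i\,dx = \int_{\delta_i U}\int_U L_i(\xi_i,\eta_i) f(x)\,dx\,d\eta_i,
\]
whose inner $x$-integral is precisely $[(\pi_{i,*}\mathcal{A}_i)f](\eta_i)$ by definition.

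The hard part will be the precise bookkeeping in the step identifying the $\eta_i$-integration region with $\pi_i(F\setminus U)$: strictly, the integrand vanishes only on $\{\eta_i:(\xi_1,\dots,\eta_i,\dots,\xi_d)\in U\}$, a set whose complement a priori depends on $\hat{x}_i=(\xi_1,\dots,\widehat{\xi_i},\dots,\xi_d)$ and is only contained in $\pi_i(F\setminus U)$. Showing that the two regions agree modulo an $L_i(\xi_i,\cdot)$-null set leans on the polydisc structure of $F$ and on the compatibility of $U$ with the covering $\mathcal{U}$, exactly as is used implicitly in Lemma~\ref{boundaryCondDist}. A parallel care is needed for the contribution of $\mathcal{L}_i\mathds{1}_U$ on $F\setminus U$ to the pairing $\langle \mathcal{L}_i\mathds{1}_U,f\rangle_{L^2(F)}$; this should vanish either because $f$ is effectively supported in $\closure_\delta U$ and satisfies the boundary condition of Lemma~\ref{boundaryCondDist}, or by a symmetric cancellation obtained from a direct Fubini argument with the swap $\xi_i\leftrightarrow\eta_i$, in the spirit of the proof of Lemma~\ref{SchwartzLemma}, which is an alternative route that bypasses self-adjointness entirely.
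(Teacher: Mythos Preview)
Your proposal is correct and follows essentially the same route as the paper: pass from $\int_U\mathcal{L}_if\,dx$ to $\langle f,\mathcal{L}_i\mathds{1}_U\rangle$ via self-adjointness, evaluate $\mathcal{L}_i\mathds{1}_U$ on $U$ as $\int_{\pi_i(F\setminus U)}L_i(\xi_i,\eta_i)\,d\eta_i$, cut the $\eta_i$-domain down to $\delta_iU$ by the very definition of the boundary, swap the order of integration, and recognise the inner integral as $(\pi_{i,*}\mathcal{A}_i)f$. The bookkeeping concerns you raise in your final paragraph---the $x$-dependence of the effective $\eta_i$-region and the contribution from $F\setminus U$---are handled in the paper at exactly the same informal level as in Lemma~\ref{boundaryCondDist}, so you are not missing an additional idea.
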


\begin{proof}
It holds true that
\begin{align*}
\int_U\mathcal{L}_if(x)\,dx
&=\langle f,\mathcal{L}_i1_U\rangle
=\int_{F}f(x)\int_{\pi_i(F\setminus U)}L_i(\xi_i,\eta_i)\,d\eta_i\,dx
\\
&=\int_{\delta_i U}\int_U L_i(\xi_i,\eta_i)f(x)\,dx\,d\eta_i
\\
&=\int_{\delta_iU}\left[
\pi_{i,*}\mathcal{A}_if\right](\eta_i)\,d\eta_i
\end{align*}
as asserted.
\end{proof}

\section{Elliptic Divergence Operators}\label{EllipticDivergenceOp}

A suitable analogue of test functions on $U$ with compact support in the context of the operators $\mathcal{L}_i$ is given by
\[
\mathcal{D}_0(U)=\mathset{\phi\in\mathcal{D}(\closure_\delta U)\mid\forall\,i=1,\dots,d\,\forall\,x\in U\colon \phi(x)\int_{\delta_i U}L_i(\xi_i,\eta_i)\,d\eta_i=0}\,,
\]
where it is assumed that $x=(\xi_1,\dots,\xi_d)\in U$.

\subsection{Poisson equation}

A homogeneous second-order divergence operator is given as the following:
\begin{align}\label{divergenceHomogeneous}
P_2(\mathcal{L})u=\sum\limits_{i,j=1}^d\mathcal{L}_j\left(a^{ij}\mathcal{L}_iu\right)\,,
\end{align}
where $a^{ij}\colon F\to\mathds{R}$ are functions such that 
\[
a^{ij}=a^{ij}
\]
for $i,j=1,\dots,d$.
The operator $P_2(\mathcal{L})$ is called \emph{elliptic}, if the matrix $A=(a^{ij}(x))$ is positive definite for almost all $x\in F$, and the smallest positive eigenvalue of $A$ is always at least $\theta>0$.
Of interest is the boundary value problem of the Poisson equation:
\begin{align}\label{PoissonEquation}
\begin{cases}
\;P_2(\mathcal{L})u(x)\!\!\!\!&=f(x),\quad
x\in U
\\
\hfill u|_{\delta U}\!\!\!\!&=0
\end{cases}
\end{align}
with some given $f\in L^2(U)$.
A function $u\in W_0^{1,2}(U)$ is a \emph{weak solution} of (\ref{PoissonEquation}), if it holds true that
\[
\int_U\sum\limits_{i,j=1}^d\mathcal{L}_j(a^{ij}\mathcal{L}_iu)\phi\,dx=\int_Uf(x)\phi(x)\,dx
\]
for all $\phi\in W_0^{1,2}(U)$. 
From the self-adjointness of $\mathcal{L}_j$ (Theorem \ref{L2-Spectrum}), it follows that this is equivalent with
\begin{align*}
\int_U\mathcal{L}_j(a^{ij}\mathcal{L}_iu)\phi\,dx
&=\int_Ua^{ij}\mathcal{L}_iu\cdot\mathcal{L}_j\phi\,dx\,.
\end{align*}
Hence, 
\[
B_2[u,\phi]=\int_U\sum\limits_{i,j=1}^da^{ij}\mathcal{L}_iu\mathcal{L}_j\phi\,dx=\int_Uf(x)\phi(x)\,dx
\]
for all $\phi\in \mathcal{D}_0(U)$
is an equivalent formulation of $u$ being a weak solution of (\ref{PoissonEquation}).

\begin{Lemma}\label{alternativeEquation}
A function  $u$ is a weak solution of (\ref{PoissonEquation}), if and only if
\[
\int_U\sum\limits_{i,j=1}^d\mathcal{L}_{j,\phi}(a^{ij}\mathcal{L}_iu)\,dx=-\int_Uf(x)\phi(x)\,dx
\]
for all $\phi\in \mathcal{D}_0(U)$.
\end{Lemma}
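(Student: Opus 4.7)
The plan is to apply the Leibniz-like identity of Lemma \ref{LeibnizLike} to convert the symmetric bilinear form $B_2[u,\phi]$ into the shape appearing on the left of the claimed equation, and then show that the resulting boundary term vanishes because of the condition $\phi\in\mathcal{D}_0(U)$.

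Concretely, I would start from the already-observed equivalent formulation that $u\in W_0^{1,2}(U)$ is a weak solution of (\ref{PoissonEquation}) if and only if
\[
B_2[u,\phi]=\sum\limits_{i,j=1}^d\int_U a^{ij}\,\mathcal{L}_iu\,\mathcal{L}_j\phi\,dx=\int_Uf(x)\phi(x)\,dx\qquad\text{for all }\phi\in\mathcal{D}_0(U).
\]
For fixed $i,j$ I would then apply Lemma \ref{LeibnizLike} to the product of $v:=a^{ij}\mathcal{L}_iu$ and $\phi$, obtaining
\[
\mathcal{L}_j(v\phi)=\mathcal{L}_{j,\phi}v+v\,\mathcal{L}_j\phi,
\]
so that after rearranging and integrating over $U$,
\[
\int_U a^{ij}\,\mathcal{L}_iu\,\mathcal{L}_j\phi\,dx
=\int_U\mathcal{L}_j\!\bigl(a^{ij}\mathcal{L}_iu\cdot\phi\bigr)\,dx
-\int_U\mathcal{L}_{j,\phi}\bigl(a^{ij}\mathcal{L}_iu\bigr)\,dx.
\]

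The decisive step is that the first integral on the right-hand side vanishes. By the computation carried out in the proof of Lemma \ref{boundaryCondDist}, for any function $w$ one has
\[
\int_U\mathcal{L}_j(w\phi)(x)\,dx=\int_U w(x)\,\phi(x)\int_{\delta_jU}L_j(\xi_j,\eta_j)\,d\eta_j\,dx,
\]
and the inner boundary integral times $\phi(x)$ vanishes identically on $U$ by the very definition of $\mathcal{D}_0(U)$. Applying this with $w=a^{ij}\mathcal{L}_iu$ therefore yields $\int_U\mathcal{L}_j(a^{ij}\mathcal{L}_iu\cdot\phi)\,dx=0$ for every $i,j$ and every $\phi\in\mathcal{D}_0(U)$.

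Summing over $i,j=1,\dots,d$ gives
\[
B_2[u,\phi]=-\int_U\sum\limits_{i,j=1}^d\mathcal{L}_{j,\phi}\bigl(a^{ij}\mathcal{L}_iu\bigr)\,dx,
\]
and comparing this with $B_2[u,\phi]=\int_Uf\phi\,dx$ produces the asserted equivalent reformulation in both directions (since each step above is a reversible identity under the standing assumption $\phi\in\mathcal{D}_0(U)$). The only potential obstacle is ensuring that the Leibniz rule of Lemma \ref{LeibnizLike} applies to the product $a^{ij}\mathcal{L}_iu\cdot\phi$ in the sense required; this is covered by the hypothesis that $u\in W_0^{1,2}(U)$, the $L^\infty$-nature of the coefficients $a^{ij}$, and $\phi\in\mathcal{D}_0(U)\subset\mathcal{D}(\closure_\delta U)$, so that each of the two terms on the right of the Leibniz identity is defined as a locally integrable function and Fubini is justified throughout.
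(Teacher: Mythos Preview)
Your proof is correct and follows essentially the same route as the paper: both arguments apply the Leibniz-like rule (Lemma~\ref{LeibnizLike}) to rewrite $B_2[u,\phi]$, and then show that the boundary term $\int_U\mathcal{L}_j(a^{ij}\mathcal{L}_iu\cdot\phi)\,dx$ vanishes via the computation $\langle w\phi,\mathcal{L}_j 1_U\rangle=\int_U w(x)\phi(x)\int_{\delta_jU}L_j(\xi_j,\eta_j)\,d\eta_j\,dx=0$ coming from the definition of $\mathcal{D}_0(U)$. The paper carries out this boundary computation explicitly rather than citing Lemma~\ref{boundaryCondDist}, but the content is identical.
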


\begin{proof}
It holds true that
\begin{align*}
\int_U\mathcal{L}_j(a^{ij}\mathcal{L}_iu\cdot\phi)\,dx
&=\langle a^{ij}\mathcal{L}_iu\cdot\phi,\mathcal{L}_j1_U\rangle
\\
&=\int_U a^{ij}(x)\mathcal{L}_iu(x)\,\phi(x)\int_{\pi_j(F\setminus U)}L_j(\xi_j,\eta_j)\,d\eta_j\,\,dx
\\
&=\int_Ua^{ij}(x)\mathcal{L}_iu(x)\,\phi(x)\int_{\delta_i(U)} L_j(\xi_j,\eta_j)\,d\eta_j\,dx =0\,,
\end{align*}
because 
\[
\phi(x)\int_{\delta_j U}L_j(\xi_j,\eta_j)\,d\eta_j=0
\]
for all $x\in U$.
From Lemmas \ref{LeibnizLike} and \ref{boundaryCondDist},
the assertion now follows.
\end{proof}

\begin{Ass}
It is assumed that $a^{ij}\in L^\infty(F)$ for $i,j=1,\dots,d$.
\end{Ass}

\begin{thm}[Energy estimates, homogeneous case]\label{energyHomogeneous}
There exist constants $\alpha,\beta>0$ such that
\begin{align*}
\absolute{B[u,v]}&\le
\alpha\norm{u}_{W^{1,2}_0(U)}
\norm{v}_{W^{1,2}_0(U)}
\\
\beta\norm{u}_{W^{1,2}_0(U)}
&\le B[u,u]
\end{align*}
for all $u,v\in W^{1,2}_0(U)$.
\end{thm}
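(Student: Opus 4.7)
The plan is to follow the classical Lax--Milgram energy-estimate argument (cf.\ \cite[Ch.\ 6.2]{evans}), taking $B[u,v]$ in the equivalent form
\[
B[u,v]=\int_U\sum\limits_{i,j=1}^d a^{ij}(x)\,\mathcal{L}_iu(x)\,\mathcal{L}_jv(x)\,dx
\]
that was derived just before the statement of the theorem. Both inequalities are then pointwise bounds on the integrand, followed by an application of the Cauchy--Schwarz inequality in $L^2(U)$, plus, for the coercivity, Proposition \ref{PoincareInequality}. The power on the right-hand side of the second inequality is read as $\norm{u}_{W^{1,2}_0(U)}^2$, since the bilinear form is quadratic in $u$.

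For the boundedness, I would apply $\absolute{a^{ij}(x)}\le\norm{a^{ij}}_{L^\infty(F)}$ pointwise almost everywhere, use the triangle inequality in the $(i,j)$-sum, and then Cauchy--Schwarz in $L^2(U)$:
\[
\absolute{B[u,v]}\le\sum_{i,j=1}^d\norm{a^{ij}}_{L^\infty(F)}\norm{\mathcal{L}_iu}_{L^2(U)}\norm{\mathcal{L}_jv}_{L^2(U)}.
\]
Since each $\norm{\mathcal{L}_iu}_{L^2(U)}\le\norm{u}_{W^{1,2}_0(U)}$ and similarly for $v$, I can take
\[
\alpha=\sum_{i,j=1}^d\norm{a^{ij}}_{L^\infty(F)}
\]
to conclude the first estimate.

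For the coercivity, ellipticity of $A=(a^{ij}(x))$ says that almost everywhere on $F$ the smallest eigenvalue is at least $\theta>0$. Applied to the vector $(\mathcal{L}_1u(x),\dots,\mathcal{L}_du(x))$, this yields the pointwise bound
\[
\sum_{i,j=1}^da^{ij}(x)\mathcal{L}_iu(x)\mathcal{L}_ju(x)\ge\theta\sum_{i=1}^d\absolute{\mathcal{L}_iu(x)}^2.
\]
Integrating over $U$ gives $B[u,u]\ge\theta\sum_{i=1}^d\norm{\mathcal{L}_iu}_{L^2(U)}^2$. Poincar\'e's inequality (Proposition \ref{PoincareInequality}) provides a constant $C>0$ with $\norm{u}_{L^2(U)}^2\le C\norm{\mathcal{L}_iu}_{L^2(U)}^2$ for any $i$; averaging over $i=1,\dots,d$ and combining gives
\[
\norm{u}_{W^{1,2}_0(U)}^2=\norm{u}_{L^2(U)}^2+\sum_{i=1}^d\norm{\mathcal{L}_iu}_{L^2(U)}^2\le\left(1+\frac{C}{d}\right)\sum_{i=1}^d\norm{\mathcal{L}_iu}_{L^2(U)}^2,
\]
so one can take $\beta=\theta\bigl(1+\tfrac{C}{d}\bigr)^{-1}$.

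The only delicate point I foresee is verifying that Proposition \ref{PoincareInequality} is indeed applicable to $u\in W^{1,2}_0(U)$ (whose natural ambient is $W^{1,2}(\closure_\delta U)$): the vanishing condition on $\delta U$ is what guarantees that the wavelet-expansion argument from the proof of Proposition \ref{PoincareInequality}, applied on $\closure_\delta U$, controls $\norm{u}_{L^2(U)}$ by $\norm{\mathcal{L}_iu}_{L^2(U)}$ rather than by the larger $L^2$-norm on $\closure_\delta U$. Once this is clear, both estimates are routine.
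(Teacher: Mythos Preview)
Your proof is correct and follows essentially the same route as the paper: the $L^\infty$-bound on the coefficients together with Cauchy--Schwarz for boundedness, and the pointwise ellipticity (Rayleigh quotient) bound followed by the Poincar\'e inequality (Proposition \ref{PoincareInequality}) for coercivity. Your version is in fact more explicit about the constants $\alpha$ and $\beta$, and your reading of the second inequality with $\norm{u}_{W^{1,2}_0(U)}^2$ is the intended one (compare the statement of Theorem \ref{energy}).
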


\begin{proof}
First, observe that
\begin{align*}
\absolute{B[u,v]}\le\sum\limits_{i,j=1}^d\norm{a^{ij}}_{L^\infty}\int_U\absolute{\mathcal{L}_iu}\absolute{\mathcal{L}_jv}\,dx
\le\alpha\norm{u}_{W_0^{1,2}(U)}\norm{v}_{W_0^{1,2}(U)}
\end{align*}
for some $\alpha>0$. Next, ellipticity means that from the properties of the Rayleigh quotient, it follows that
\begin{align*}
\theta\int_U\sum\limits_{i=1}^d
\absolute{\mathcal{L}_iu}^2\,dx
\le\int_U
\sum\limits_{i,j=1}^d
a^{ij}\absolute{\mathcal{L}_iu}\absolute{\mathcal{L}_ju}
\,dx=B[u,u]
\end{align*}
Using the $p$-adic Poincar\'e inequality in  Proposition \ref{PoincareInequality},
the second inequality now follows.
\end{proof}

\begin{cor}\label{WeakSolutionDeg2}
Assume that the operator $P_2(\mathcal{L})$ is elliptic. Then (\ref{PoissonEquation}) has a unique weak solution in $W_0^{1,2}(U)$.
\end{cor}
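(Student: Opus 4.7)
The plan is to recognise this as a direct application of the Lax--Milgram theorem on the Hilbert space $W_0^{1,2}(U)$. By Proposition \ref{SobolevClosedSubspace} together with Corollary \ref{SobolevHilbertSpace}, $W_0^{1,2}(U)$ is a closed subspace of the Hilbert space $W^{1,2}(\closure_\delta U)$, hence itself a Hilbert space in the inherited inner product. The bilinear form
\[
B_2[u,v]=\int_U\sum_{i,j=1}^d a^{ij}\mathcal{L}_iu\,\mathcal{L}_jv\,dx
\]
is bilinear by inspection, and Theorem \ref{energyHomogeneous} supplies precisely the boundedness estimate $\absolute{B_2[u,v]}\le\alpha\norm{u}_{W_0^{1,2}(U)}\norm{v}_{W_0^{1,2}(U)}$ and the coercivity estimate $\beta\norm{u}_{W_0^{1,2}(U)}\le B_2[u,u]$ required as input for Lax--Milgram.

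Next I would verify that the right-hand side $\ell(\phi)=\int_U f(x)\phi(x)\,dx$ is a bounded linear functional on $W_0^{1,2}(U)$. Linearity is clear, and by Cauchy--Schwarz together with the fact that $\norm{\phi}_{L^2(U)}\le\norm{\phi}_{W_0^{1,2}(U)}$ (since the $L^2$-norm is one of the summands in the definition of the Sobolev norm),
\[
\absolute{\ell(\phi)}\le\norm{f}_{L^2(U)}\norm{\phi}_{L^2(U)}\le\norm{f}_{L^2(U)}\norm{\phi}_{W_0^{1,2}(U)}.
\]
With these three ingredients in hand, Lax--Milgram produces a unique $u\in W_0^{1,2}(U)$ satisfying $B_2[u,\phi]=\ell(\phi)$ for every $\phi\in W_0^{1,2}(U)$, and in particular for every $\phi\in\mathcal{D}_0(U)\subset W_0^{1,2}(U)$. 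Combined with the self-adjointness identity preceding Lemma \ref{alternativeEquation}, which converts $B_2[u,\phi]$ into the boundary-respecting integral identity used to define weak solutions, this $u$ is the unique weak solution of (\ref{PoissonEquation}).

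The main obstacle is essentially bookkeeping rather than substance: I need to be confident that the weak formulation given earlier in the section—phrased with test functions $\phi\in\mathcal{D}_0(U)$ and relying on the integration-by-parts identity from Lemma \ref{alternativeEquation}—really coincides with the Hilbert-space formulation $B_2[u,\phi]=\ell(\phi)$ for all $\phi\in W_0^{1,2}(U)$ that Lax--Milgram delivers. In view of the continuity of $B_2$ and $\ell$ on $W_0^{1,2}(U)$, this reduces to the density of $\mathcal{D}_0(U)$ in $W_0^{1,2}(U)$ in the Sobolev norm; this is natural in the $p$-adic setting given the abundance of locally constant functions respecting the discrete boundary condition encoded in the kernels $L_i$, and it extends the identity from the test class to the full Hilbert space.
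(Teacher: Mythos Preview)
Your proof is correct and follows exactly the paper's approach: the paper's proof consists of the single sentence ``This follows from the Lax--Milgram Theorem \cite[Theorem 6.2.1]{evans},'' and you have simply unpacked the hypotheses (Hilbert space structure, boundedness and coercivity from Theorem \ref{energyHomogeneous}, boundedness of the right-hand side functional) that make that citation legitimate. Your additional remarks on reconciling the $\mathcal{D}_0(U)$-test-function formulation with the full Hilbert-space formulation via density are careful bookkeeping that the paper leaves implicit.
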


\begin{proof}
This follows from the Lax-Milgram Theorem \cite[Theorem 6.2.1]{evans}.
\end{proof}

Now, let $P(\mathcal{L})$ be of the form
\begin{align}\label{OurOperator}
P(\mathcal{L})u=P_2(\mathcal{L})u+P_1(\mathcal{L})u+P_0(\mathcal{L})u
\end{align}
with $P_2(\mathcal{L})$ an operator as in 
(\ref{divergenceHomogeneous}), and
\begin{align*}
P_1(\mathcal{L})u&=\sum\limits_{i=1}^d b^i\mathcal{L}_iu,
\\
P_0(\mathcal{L})u&=cu\,,
\end{align*}
where again it is assumed that
\[
a^{ij},\;b^i,\;c\in L^\infty(\closure_\delta U)
\]
with $U\subseteq F$ open. The operator $P(\mathcal{L})$ is called \emph{elliptic}, if $P_2(\mathcal{L})$ is elliptic. Now, the boundary value problem is
\begin{align}\label{bvp}
\begin{cases}
\;
P(\mathcal{L})u(x)\!\!\!\!&=f(x),\quad x\in U
\\
\hfill u|_{\delta U}\!\!\!\!&=0
\end{cases}
\end{align}
with some given $f\in L^2(U)$.
A function $u\in W_0^{1,2}(U)$ is a weak solution of (\ref{bvp}), if
\[
\int_U\left(
\sum\limits_{i,j=1}^d\mathcal{L}_j(a^{ij}\mathcal{L}_iu)
+\sum\limits_{i=1}^db^i\mathcal{L}_iu+cu
\right)\phi\,dx=\int_Uf(x)\phi(x)\,dx
\]
for all $\phi\in\mathcal{D}_0(U)$. Again, it follows from the self-adjointness property (Theorem \ref{L2-Spectrum}) that
(\ref{bvp}) is equivalent with
\[
B[u,\phi]
=\int_U
\left(\sum\limits_{i,j=1}^d
a^{ij}\mathcal{L}_iu\mathcal{L}_j
+\sum\limits_{i=1}^d b^i\mathcal{L}_iu+cu\right)\phi\,dx=
\int_Uf(x)\phi(x)\,dx\,
\]
and, according to Lemma \ref{alternativeEquation}, the quadratic part can be replaced as
follows:
\[
B_2[u,\phi]=\int_U\sum\limits_{i,j=1}^da^{ij}\mathcal{L}_iu\mathcal{L}_j\phi\,dx=-\int_U\sum\limits_{i,j=1}^d\mathcal{L}_{j,\phi}(a^{ij}\mathcal{L}_iu)\,dx
\]
for $\phi\in\mathcal{D}_0(U)$.

\begin{thm}[Energy estimates]\label{energy}
There exist constants $\alpha,\beta>0$ and $\gamma\ge 0$ such that
\begin{align*}
\absolute{B[u,v]}&\le\alpha\norm{u}_{W_0^{1,2}(U)}
\norm{v}_{W_0^{1,2}(U)}
\\
\beta\norm{u}_{W_0^{1,2}(U)}^2
&\le B[u,u]+\gamma \norm{u}_{L^2(U)}^2
\end{align*}
for all $u,v\in W_0^{1,2}(U)$.
\end{thm}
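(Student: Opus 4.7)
The plan is to derive both inequalities by splitting $B[u,v]$ into its quadratic, first-order, and zeroth-order pieces and treating each with standard bounds, using Theorem \ref{energyHomogeneous} as the backbone for the quadratic part.

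For the boundedness inequality, I would first bound $|B_2[u,v]|$ by $\alpha_2 \norm{u}_{W_0^{1,2}(U)}\norm{v}_{W_0^{1,2}(U)}$ directly from Theorem \ref{energyHomogeneous}. Then, using $b^i,c\in L^\infty(\closure_\delta U)$ and the Cauchy--Schwarz inequality,
\begin{align*}
\left|\int_U b^i\mathcal{L}_iu\cdot v\,dx\right|
&\le \norm{b^i}_{L^\infty}\norm{\mathcal{L}_iu}_{L^2(U)}\norm{v}_{L^2(U)}
\le \norm{b^i}_{L^\infty}\norm{u}_{W_0^{1,2}(U)}\norm{v}_{W_0^{1,2}(U)},
\\
\left|\int_U cuv\,dx\right|
&\le \norm{c}_{L^\infty}\norm{u}_{L^2(U)}\norm{v}_{L^2(U)}
\le \norm{c}_{L^\infty}\norm{u}_{W_0^{1,2}(U)}\norm{v}_{W_0^{1,2}(U)},
\end{align*}
where I use that $\norm{\cdot}_{L^2(U)}\le\norm{\cdot}_{W_0^{1,2}(U)}$ by definition of the Sobolev norm. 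Summing these contributions gives the first inequality with an $\alpha$ depending on the $L^\infty$ norms of the coefficients and on the constant from Theorem \ref{energyHomogeneous}.

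For the coercivity inequality, I would start from the second estimate of Theorem \ref{energyHomogeneous}, which provides $\beta_0>0$ with $\beta_0\norm{u}_{W_0^{1,2}(U)}^2\le B_2[u,u]$. Writing $B[u,u]=B_2[u,u]+\sum_i\int_U b^i\mathcal{L}_iu\cdot u\,dx+\int_U cu^2\,dx$ and using Young's inequality $ab\le \varepsilon a^2+\tfrac{1}{4\varepsilon}b^2$ with $\varepsilon$ chosen so that the $\varepsilon\sum_i\norm{\mathcal{L}_iu}_{L^2}^2$ term can be absorbed into $\tfrac{\beta_0}{2}\norm{u}_{W_0^{1,2}(U)}^2$, I obtain
\[
\left|\sum_{i=1}^d\int_U b^i\mathcal{L}_iu\cdot u\,dx\right|
\le \tfrac{\beta_0}{2}\norm{u}_{W_0^{1,2}(U)}^2
+ C_1\norm{u}_{L^2(U)}^2,
\]
while $\left|\int_U cu^2\,dx\right|\le \norm{c}_{L^\infty}\norm{u}_{L^2(U)}^2$. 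Rearranging gives $\beta\norm{u}_{W_0^{1,2}(U)}^2\le B[u,u]+\gamma\norm{u}_{L^2(U)}^2$ with $\beta=\beta_0/2$ and $\gamma=C_1+\norm{c}_{L^\infty}$.

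The main obstacle is a minor one: ensuring that the first-order term is handled with Young's inequality in a way that really absorbs into the coercivity reserve from $B_2[u,u]$, i.e.\ that the splitting constant $\varepsilon$ can be chosen uniformly in $u$. This is routine because it depends only on the fixed $L^\infty$ norms of the $b^i$ and on $\beta_0$. Note also that no $p$-adic Poincar\'e step is strictly needed at this stage, since the Sobolev norm already controls both $\norm{u}_{L^2}$ and $\norm{\mathcal{L}_iu}_{L^2}$; Proposition \ref{PoincareInequality} could be invoked afterwards to trade $\norm{u}_{L^2}^2$ against $\norm{\mathcal{L}_iu}_{L^2}^2$ if desired, but it is not required for the stated estimate.
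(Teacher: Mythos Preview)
Your proposal is correct and follows the same overall strategy as the paper: bound the lower-order terms by $L^\infty$ norms and Cauchy--Schwarz for the first inequality, and absorb the first-order term via Cauchy/Young with $\epsilon$ for the second. The one packaging difference is that you invoke Theorem \ref{energyHomogeneous} as a black box for the coercivity of $B_2$, whereas the paper re-derives the ellipticity lower bound $\theta\int_U\sum_i\absolute{\mathcal{L}_iu}^2\,dx\le B_2[u,u]$ directly and then applies the Poincar\'e inequality (Proposition \ref{PoincareInequality}) at the end to pass from $\sum_i\norm{\mathcal{L}_iu}_{L^2}^2$ to the full $W_0^{1,2}$-norm. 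Your remark that Poincar\'e is not strictly needed at this stage is accurate precisely because it is already built into Theorem \ref{energyHomogeneous}; this makes your argument slightly more modular, while the paper's version is more self-contained and closer to the classical Evans template.
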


\begin{proof}
The proof of the analogous classical result found in \cite[Theorem 2]{evans} can be adapted as follows:
\begin{align*}
\theta\int_U\absolute{\mathcal{L}u}^2\,dx&\le\int_U
\sum\limits_{i,j=1}^da^{ij}\mathcal{L}_iu\mathcal{L}_iu\,dx
\\
&=B[u,u]-\int_Ub^i\mathcal{L}_iu\,u+cu^2\,dx
\\
&\le B[u,u]+\sum\limits_{i=1}^d\norm{b^i}_{L^\infty}
\int_U\absolute{\mathcal{L}u}\absolute{u}\,dx
+\norm{c}_{L^\infty}\int_U u^2\,dx\,.
\end{align*}
Using Cauchy's inequality with $\epsilon$, cf.\ \cite[\S B.2]{evans}, obtain
\[
\int_U\absolute{\mathcal{L}u}\absolute{u}\,dx\le\epsilon\int_U\absolute{\mathcal{L}u}^2\,dx+\frac{1}{4\epsilon}\int_Uu^2\,dx
\]
for $\epsilon>0$. Choose $\epsilon>0$ so small that
\[
\epsilon\sum\limits_{i=1}^d\norm{b^i}_{L^\infty}<\frac{\theta}{2}\,.
\]
This implies
\[
\frac{\theta}{2}\int_U\absolute{\mathcal{L}u}^2\,dx\le B[u,u]+C\int_Uu^2\,dx
\]
for some $C>0$. Again, using the $p$-adic Poincar\'e inequality
in  Proposition \ref{PoincareInequality},
obtain
\[
\beta\norm{u}_{W_0^{1,2}(U)}^2\le B[u,u]+\gamma\norm{u}_{L^2(U)}^2
\]
for suitable $\beta>0$, $\gamma\ge0$. This proves the assertions.
\end{proof}

\begin{cor}\label{weak_bvp_perturbed}
There is a number $\gamma\ge0$ such that for all $\mu\ge\gamma$ and every $f\in L^2(U)$, there exists a weak solution in $W_0^{1,2}(U)$ of the boundary value problem
\begin{align}\label{bvp_perturbed}
\begin{cases}
\;P(\mathcal{L})u(x)+\mu u(x)\!\!\!\!&=f(x),\quad x\in U
\\
\hfill u|_{\delta U}\!\!\!\!&=0
\end{cases}
\end{align}
for $U\subseteq F$ open.
\end{cor}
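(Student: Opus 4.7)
The plan is to reduce this to a direct application of the Lax--Milgram theorem, just as in Corollary \ref{WeakSolutionDeg2}, after absorbing the $\mu u$ term into the bilinear form. Writing the perturbed bilinear form as
\[
B_\mu[u,\phi]:=B[u,\phi]+\mu\langle u,\phi\rangle_{L^2(U)},
\]
a function $u\in W_0^{1,2}(U)$ is by self-adjointness of the $\mathcal{L}_j$ (Theorem \ref{L2-Spectrum}) a weak solution of (\ref{bvp_perturbed}) precisely when $B_\mu[u,\phi]=\langle f,\phi\rangle_{L^2(U)}$ for all $\phi\in\mathcal{D}_0(U)$, and then by density for all $\phi\in W_0^{1,2}(U)$.

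First I would record that $W_0^{1,2}(U)$ is a Hilbert space: this is immediate from Proposition \ref{SobolevClosedSubspace} together with Corollary \ref{SobolevHilbertSpace}. Second, boundedness of $B_\mu$ on $W_0^{1,2}(U)$ follows from the first energy estimate in Theorem \ref{energy} combined with Cauchy--Schwarz applied to the $\mu$-term, giving a constant $\alpha_\mu:=\alpha+\mu$ with
\[
\absolute{B_\mu[u,v]}\le\alpha_\mu\norm{u}_{W_0^{1,2}(U)}\norm{v}_{W_0^{1,2}(U)}.
\]
Third, coercivity is where the shift by $\mu$ does its work: letting $\gamma\ge0$ be the constant from the second energy estimate in Theorem \ref{energy}, any $\mu\ge\gamma$ gives
\[
B_\mu[u,u]=B[u,u]+\mu\norm{u}_{L^2(U)}^2\ge\beta\norm{u}_{W_0^{1,2}(U)}^2+(\mu-\gamma)\norm{u}_{L^2(U)}^2\ge\beta\norm{u}_{W_0^{1,2}(U)}^2,
\]
so $B_\mu$ is coercive with the same constant $\beta$ for every $\mu\ge\gamma$.

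Finally, $\phi\mapsto\langle f,\phi\rangle_{L^2(U)}$ is a bounded linear functional on $W_0^{1,2}(U)$ because $\norm{\phi}_{L^2(U)}\le\norm{\phi}_{W_0^{1,2}(U)}$ and $f\in L^2(U)$. Applying the Lax--Milgram theorem (\cite[Theorem 6.2.1]{evans}) to $B_\mu$ on the Hilbert space $W_0^{1,2}(U)$ produces a unique $u\in W_0^{1,2}(U)$ with $B_\mu[u,\phi]=\langle f,\phi\rangle_{L^2(U)}$ for all $\phi\in W_0^{1,2}(U)$, which is the desired weak solution. The only nontrivial step is the coercivity, but this is essentially handed to us by Theorem \ref{energy}; the proof is therefore short and formal once the bilinear form is properly assembled.
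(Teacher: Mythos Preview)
Your proof is correct and is precisely the argument the paper has in mind: the paper's own proof simply states that, given the energy estimates of Theorem \ref{energy}, the proof of \cite[Theorem 6.2.3]{evans} carries over, and what you have written is exactly that proof spelled out. The only thing you add beyond the paper is the explicit verification of boundedness and coercivity of $B_\mu$, which is routine.
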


\begin{proof}
Due to the energy estimates of Theorem \ref{energy}, the proof of \cite[Theorem 6.2.3]{evans} carries over.
\end{proof}

\subsection{Diagonalisability of elliptic divergence operators}

Let 
\[
\phi,\tilde\phi\in\mathcal{E}:=
\mathset{\text{eigenbasis for $\mathcal{L}_1$}}\otimes\dots\otimes\mathset{\text{eigenbasis for $\mathcal{L}_d$}}
\]
where ``eigenbasis for $\mathcal{L}_i$'' refers to the orthonormal basis of $L^2(\pi_i(U))$ consisting of eigenfunctions of the push-forward component operator $\pi_{i,*}\mathcal{L}_i$, cf.\ Theorem \ref{ComponentSpectrum}.

\begin{Proposition}\label{P2Diagonalisable}
The operator  $P_2(\mathcal{L})$ is 
densely defined on $L^2(U)$. It is also
self-adjoint on $L^2(U)$. If further
$a^{ij}\in \mathcal{D}(U)$ for all $i,j=1,\dots,d$, then the spectrum of $P^2(\mathcal{L})$ on $L^2(U)$ is a point spectrum. If, furthermore, $P_2(\mathcal{L})$ is elliptic, then 
$P_2(\mathcal{L})$ 
is positive semi-definite.
\end{Proposition}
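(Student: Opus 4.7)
The plan is to handle the four assertions in sequence, leveraging the spectral results already available for the component Laplacians in Theorems \ref{ComponentSpectrum} and \ref{L2-Spectrum}.

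For density and symmetry, first observe that $\mathcal{D}(U)$ lies in the domain of $P_2(\mathcal{L})$: each $\mathcal{L}_i$ preserves $\mathcal{D}(U)$ since its eigenbasis is locally constant (Theorem \ref{ComponentSpectrum}), and multiplication by $a^{ij} \in L^\infty$ preserves $L^2$. As $\mathcal{D}(U)$ is dense in $L^2(U)$, this gives the density claim. Symmetry then follows from
\[
\langle P_2(\mathcal{L}) u, v \rangle = \sum_{i,j=1}^d \langle a^{ij}\mathcal{L}_i u, \mathcal{L}_j v \rangle,
\]
the self-adjointness of each $\mathcal{L}_j$ (Theorem \ref{L2-Spectrum}), and the symmetry $a^{ij} = a^{ji}$ of the coefficient matrix.

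For the point spectrum claim, the core idea is to construct a filtration of finite-dimensional invariant subspaces. Since each $a^{ij}$ is a test function, there exists $n_0$ such that every $a^{ij}$ is constant on the polydiscs of the level-$n_0$ refinement of $\mathcal{U}$. For $n \geq n_0$, let $V_n$ denote the finite-dimensional space of $L^2(U)$-functions locally constant on this level-$n$ refinement. Via the tensor-product decomposition $L^2(U) = \bigotimes_i L^2(\pi_i(U))$, the space $V_n$ is spanned by a finite collection of products of Kozyrev wavelets and graph eigenfunctions from Theorem \ref{ComponentSpectrum}, so each $\mathcal{L}_i$ preserves $V_n$. Multiplication by $a^{ij}$ also preserves $V_n$ once $n \geq n_0$, hence $V_n$ is $P_2(\mathcal{L})$-invariant. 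The symmetry above forces $P_2(\mathcal{L})|_{V_n}$ to be orthogonally diagonalisable, and collecting these bases as $n \to \infty$ yields a complete orthonormal eigensystem in $L^2(U)$, which then identifies the unique self-adjoint extension and pins the spectrum as a point spectrum.

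Finally, positive semi-definiteness under ellipticity follows from
\[
\langle P_2(\mathcal{L}) u, u \rangle = \int_U \sum_{i,j=1}^d a^{ij}(x)\, \mathcal{L}_i u(x)\, \overline{\mathcal{L}_j u(x)}\, dx \geq \theta \int_U \sum_{i=1}^d |\mathcal{L}_i u(x)|^2\, dx \geq 0,
\]
using the lower bound $\theta > 0$ on the eigenvalues of $(a^{ij}(x))$. The main obstacle I anticipate is bridging the gap between the existence of a complete orthonormal eigensystem and the formal statements of self-adjointness and $\sigma = \sigma_p$, since for an unbounded symmetric operator initially defined on $\mathcal{D}(U)$ one must still invoke the standard functional-analytic argument that a complete eigenbasis produces deficiency indices $(0,0)$ and thus a unique self-adjoint extension whose spectrum is exhausted by its eigenvalues.
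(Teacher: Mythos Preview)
Your proof is correct and the density/symmetry arguments match the paper's. For the point-spectrum and positivity parts you take a genuinely different route, and it is worth recording the contrast.

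For the point spectrum, the paper works directly with the tensor eigenbasis $\mathcal{E}$: it expands $P_2(\mathcal{L})\phi$ in $\mathcal{E}$, observes via the identity
\[
\sum_{\phi'}\langle a^{ij},\phi'\rangle\langle \phi\phi',\phi''\rangle=\langle a^{ij},\bar\phi\phi''\rangle
\]
that only finitely many $\phi''$ occur (because $a^{ij}\in\mathcal{D}(U)$), and so obtains finite-dimensional invariant blocks $V_\phi$ indexed by basis elements. Your filtration by the spaces $V_n$ of functions locally constant at scale $n\ge n_0$ reaches the same conclusion more cheaply: invariance under $\mathcal{L}_i$ and under $M_{a^{ij}}$ is immediate, and the nesting $V_n\subset V_{n+1}$ makes the passage to a global eigenbasis automatic. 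What the paper's bookkeeping buys is the explicit matrix description $C_{\phi,ij}=D_{\phi,i}A_{\phi,ij}D_{\phi,j}$ of the restriction, which it reuses verbatim in Theorem \ref{Pdiagonalisable} and in the detailed-balance discussion of Section 6; your filtration does not produce this for free.

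For positive semi-definiteness, the paper again works basiswise, computing $\langle P_2(\mathcal{L})\phi,\phi\rangle=P_2(\phi,\phi)\ge 0$ and then summing. Your direct quadratic-form estimate $\langle P_2(\mathcal{L})u,u\rangle=\int_U\sum_{i,j}a^{ij}\mathcal{L}_i u\,\overline{\mathcal{L}_j u}\ge \theta\int_U\sum_i|\mathcal{L}_i u|^2$ is the standard argument and is both shorter and sharper (it gives the coercivity constant $\theta$, not merely non-negativity). Your closing remark about essential self-adjointness via deficiency indices is also a point the paper leaves implicit.
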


\begin{proof}
The operator 
\[
P_2(\mathcal{L})=
\sum\limits_{i,j=1}^d\mathcal{L}_iM_{a^{ij}}\mathcal{L}_j\,,
\]
is densely defined on $L^2(U)$, because each summand is defined on the space of test functions $\mathcal{D}(U)$.

\smallskip
Next, observe that
\[
P_2(\mathcal{L})^*
=\sum\limits_{i,j=1}^d
\mathcal{L}_jM_{a^{ij}}\mathcal{L}_i
=P_2(\mathcal{L})
\]
by the double summation. Hence, $P_2(\mathcal{L})$ is self-adjoint.

\smallskip
In order to see the point spectrum, observe  that for $\phi\in\mathcal{E}$, there is an $\mathcal{E}$-expansion:
\begin{align*}
P_2(\mathcal{L})\phi&=
\sum\limits_{\phi',\phi''}
\sum\limits_{i=1}^d
\lambda_{\phi,j}\lambda_{\phi'',i}
\langle a^{ij},\phi'\rangle\langle\phi\phi',\phi''\rangle\phi''\,.
\end{align*}
Now, observe that
\[
\sum\limits_{\phi'}\langle a^{ij},\phi'\rangle
\langle\phi\phi',\phi''\rangle
=\sum\limits_{\phi'}\langle a^{ij},\phi'\rangle\langle\phi',\bar\phi\phi''\rangle
=\langle a^{ij},\bar\phi\phi''\rangle\,,
\]
because the middle sum is an $\ell^2$-inner product, and this coincides with the $L^2$-inner product on the left hand side. Hence,
\begin{align*}
P_2(\mathcal{L})\phi
&=\sum\limits_{\phi''}\left\langle
\sum\limits_{i,j=1}^da^{ij}\lambda_{\phi,j}\lambda_{\phi'',i},\bar\phi\phi''\right\rangle\phi''
\\
&=\sum\limits_{\phi''}\left\langle \phi
\sum\limits_{i,j=1}^da^{ij}\lambda_{\phi,j}\lambda_{\phi'',i},\phi''
\right\rangle\phi''\,.
\end{align*}
Since the $a^{ij}\in\mathcal{D}(U)$ for $i,j=1,\dots,d$,
it follows that for fixed $\phi\in\mathcal{E}$, the last sum is a finite linear combination of $\phi''\in\mathcal{E}$.
By self-adjointness of the $M_{a^{ij}}$, it follows also that given $\phi''$, there are only finitely many $\phi\in\mathcal{E}$ such that
the matrix
\[
P_2(\phi,\phi'')
=\left\langle
\phi\sum\limits_{i,j=1}^da^{ij}\lambda_{\phi,j}\lambda_{\phi'',i},\phi''
\right\rangle
\]
has only finitely many $\phi''\in\mathcal{E}$ with non-zero values.
Since $\mathcal{E}$ is an orthonormal basis of $L^2(U)$, it now follows that this space decomposes into $P_2(\mathcal{L})$-invariant finite-dimensional subspaces which will be denoted as $V_\phi$ for $\phi\in\mathcal{E}$. Notice that for $\phi'\neq \phi$, 
it may happen that $V_{\phi'}=V_\phi$. The restriction of $P_2(\mathcal{L})$ to $V_\phi$ is the left-multiplication by a matrix of the form
\[
\sum\limits_{i,j=1}^d C_{\phi,ij}
\]
with
\[
C_{\phi,ij}=D_{\phi,i}A_{\phi,ij} D_{\phi,j}\,,
\]
where $A_{\phi,ij}$ is symmetric, and $D_{\phi,i}$, $D_{\phi,j}$ are diagonal matrices for $i,j=1,\dots,d$.
It follows that
\[
\left(\sum\limits_{i,j=1}^d
C_{\phi,ij}\right)^\top
=\sum\limits_{i,j=1}^d
D_{\phi,j}A_{\phi,ij}D_{\phi,i}
\stackrel{(*)}{=}\sum\limits_{i,j=1}D_{\phi,j}A_{\phi,ji}D_{\phi,i}
=\sum\limits_{i,j=1}^dC_{\phi,ij}\,,
\]
where $(*)$ holds true because $a^{ij}=a^{ji}$ for $i,j=1,\dots,d$. Hence, this matrix is symmetric.
This implies that $P_2(\mathcal{L})$ is unitarily diagonalisable as an operator on $L^2(U)$, and thus its spectrum is a point spectrum, as asserted.

\smallskip
The non-negativity of the eigenvalues follows thus: let $\phi\in\mathcal{E}$. From
\[
P_2(\mathcal{L})\phi
=\sum\limits_{\phi''}P_2(\phi,\phi'')\phi''
\]
it follows that
\begin{align*}
\langle P_2(\mathcal{L})\phi,\phi\rangle
&=
P_2(\phi,\phi)
=\left\langle\phi
\sum\limits_{i,j=1}^da^{ij}\lambda_{\phi,j}\lambda_{\phi,i},\phi
\right\rangle
\\
&=\lambda_{\phi,j}\lambda_{\phi,j}
\sum\limits\left\langle
\phi\sum\limits_{i,j=1}^d a^{ij},\phi
\right\rangle
\ge 0\,,
\end{align*}
because the matrix $(a^{ij})$ is positive semi-definite almost everywhere on $U$ (actually everywhere on $U$ by assumption).
Hence, $P_2(\mathcal{L})$ is positive semi-definite, since for $f\in L^2(U)$, it now follows that
\begin{align*}
\langle P_2(\mathcal{L})f,f\rangle&=
\sum\limits_{\phi}\langle f,\phi\rangle\langle\phi'',f\rangle
\sum\limits_{\phi''}P_2(\phi,\phi'')\langle\phi'',\phi\rangle
\\
&=\sum\limits_{\phi}\absolute{\langle f,\phi\rangle}^2
P_2(\phi,\phi)\ge0
\end{align*}
as asserted.
\end{proof}

In order to address the homogeneous part of degree one, observe first that
\begin{align}\label{P1adjoint}
P_1(\mathcal{L})^*v=\sum\limits_{i=1}^d\mathcal{L}_i(b^iv)
\end{align}
for $v\in L^2(U)$. It is assumed that $b^i\in \mathcal{D}(U)$ for $i=1,\dots,d$.

\begin{Proposition}[Normality Condition]\label{normalityCondition}
The operator $P_1(\mathcal{L})$ is normal, if and only if
\begin{align}\label{normalityEq}
&\sum\limits_{i,j=1}^d
\lambda_{\phi,i}\lambda_{\psi,j}
\sum\limits_{\phi'}
\langle\phi,b^i\phi'\rangle
\langle b^j\phi',\psi\rangle
=
\sum\limits_{i,j=1}^d
\sum\limits_{\phi'}
\lambda_{\phi',i}\lambda_{\phi',j}
\langle\phi,b^i\phi'\rangle
\langle b^j\phi',\phi'\rangle
\end{align}
for all $\phi,\psi\in\mathcal{E}$.
\end{Proposition}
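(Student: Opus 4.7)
The plan is to test the operator identity $P_1(\mathcal{L})P_1(\mathcal{L})^* = P_1(\mathcal{L})^*P_1(\mathcal{L})$ against all pairs $\phi,\psi\in\mathcal{E}$, exploiting that $\mathcal{E}$ is an orthonormal basis of $L^2(U)$ of joint eigenfunctions for the commuting family $\mathcal{L}_1,\dots,\mathcal{L}_d$ by Lemma \ref{SchwartzLemma} and Theorem \ref{L2-Spectrum}. Because every $\phi\in\mathcal{E}$ is a test function and each $b^i\in\mathcal{D}(U)$, the product $b^i\phi$ is again a test function and hence a finite linear combination of basis elements; consequently every double sum appearing below collapses to a finite sum, and the operator compositions reduce to algebra on the $P_1(\mathcal{L})$-invariant finite-dimensional subspaces already exhibited in Proposition \ref{P2Diagonalisable}.

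For the left-hand side of (\ref{normalityEq}), I would use the identity $\langle P_1(\mathcal{L})^*P_1(\mathcal{L})\phi,\psi\rangle=\langle P_1(\mathcal{L})\phi,P_1(\mathcal{L})\psi\rangle$ together with $P_1(\mathcal{L})\phi=\sum_j\lambda_{\phi,j}b^j\phi$ and the analogous expansion for $\psi$, obtaining
\[
\langle P_1(\mathcal{L})^*P_1(\mathcal{L})\phi,\psi\rangle=\sum_{i,j=1}^d\lambda_{\phi,j}\lambda_{\psi,i}\langle b^j\phi,b^i\psi\rangle.
\]
Expanding $\langle b^j\phi,b^i\psi\rangle=\sum_{\phi'}\langle b^j\phi,\phi'\rangle\langle\phi',b^i\psi\rangle$ via Parseval in $\mathcal{E}$ and using self-adjointness of the real multiplication operators $M_{b^i}$ to rewrite $\langle b^j\phi,\phi'\rangle=\langle\phi,b^j\phi'\rangle$ and $\langle\phi',b^i\psi\rangle=\langle b^i\phi',\psi\rangle$, then relabeling $i\leftrightarrow j$, reproduces exactly the left-hand side of (\ref{normalityEq}).

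For the right-hand side of (\ref{normalityEq}) I would use formula (\ref{P1adjoint}) together with the finite expansion $b^i\phi=\sum_{\phi'}\langle\phi,b^i\phi'\rangle\phi'$, apply $\mathcal{L}_i$ term-by-term (legitimate because the sum is finite and every $\phi'\in\mathcal{E}$ is an $\mathcal{L}_i$-eigenfunction with eigenvalue $\lambda_{\phi',i}$), and obtain
\[
P_1(\mathcal{L})^*\phi=\sum_{i=1}^d\sum_{\phi'}\lambda_{\phi',i}\langle\phi,b^i\phi'\rangle\phi'.
\]
Acting by $P_1(\mathcal{L})$ once more, using $P_1(\mathcal{L})\phi'=\sum_j\lambda_{\phi',j}b^j\phi'$, and pairing with $\psi$ produces
$\sum_{i,j=1}^d\sum_{\phi'}\lambda_{\phi',i}\lambda_{\phi',j}\langle\phi,b^i\phi'\rangle\langle b^j\phi',\psi\rangle$,
which is the right-hand side of (\ref{normalityEq}). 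Since $\mathcal{E}$ is an orthonormal basis of $L^2(U)$, the equality of these two expressions for all $\phi,\psi\in\mathcal{E}$ is equivalent to $P_1(\mathcal{L})P_1(\mathcal{L})^*=P_1(\mathcal{L})^*P_1(\mathcal{L})$ on the common core spanned by $\mathcal{E}$, which is the normality claim.

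The main potential obstacle would have been the unboundedness of each $\mathcal{L}_i$, which in general forbids free interchange of operators and summations; the hypothesis $b^i\in\mathcal{D}(U)$ is precisely what truncates the expansions to finite sums and lets the calculation proceed on a common invariant core without any analytic subtleties.
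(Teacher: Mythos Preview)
Your argument is correct and follows essentially the same route as the paper's proof: both compute the matrix elements $\langle P_1(\mathcal{L})\phi,P_1(\mathcal{L})\psi\rangle$ and $\langle P_1(\mathcal{L})^*\phi,P_1(\mathcal{L})^*\psi\rangle$ using the eigenexpansions (\ref{P1expansion}) together with Parseval in the basis $\mathcal{E}$, and then identify normality with equality of these for all $\phi,\psi\in\mathcal{E}$. The only cosmetic difference is that for the left-hand side you first take the inner product $\langle b^j\phi,b^i\psi\rangle$ and then insert the resolution of the identity, whereas the paper first expands $P_1(\mathcal{L})\phi$ in $\mathcal{E}$ and then pairs; and your expression for the right-hand side, $\sum_{i,j}\sum_{\phi'}\lambda_{\phi',i}\lambda_{\phi',j}\langle\phi,b^i\phi'\rangle\langle b^j\phi',\psi\rangle$, agrees with the paper's (\ref{normalityCond2}) (the $\phi'$ in place of $\psi$ in the displayed statement is evidently a misprint).
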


\begin{proof}
First, observe the expansions, using also (\ref{P1adjoint}):
\begin{align}\label{P1expansion}
P_1(\mathcal{L})\phi&=
\sum\limits_{i=1}^d 
\sum\limits_{\phi'}
\lambda_{\phi,i}
\langle \phi\, b^i,\phi'\rangle\phi'
\\\nonumber
P_1(\mathcal{L})^*\phi
&=\sum\limits_{i=1}^d
\sum\limits_{\phi'}
\lambda_{\phi',i}
\langle\phi\, b^i,\phi'\rangle\phi'\,,
\end{align}
from which it follows that
\begin{align}\label{normalityCond1}
\langle P_1(\mathcal{L})\phi,P_1(\mathcal{L})\psi\rangle
&=\sum\limits_{i,j=1}^d
\lambda_{\phi,i}\lambda_{\psi,j}
\sum\limits_{\phi'}\langle\phi,b^i\phi'\rangle
\langle b^j\phi',\psi\rangle
\\\label{normalityCond2}
\langle P_1(\mathcal{L})^*\phi,P_1(\mathcal{L})^*\psi\rangle
&=
\sum\limits_{i,j=1}^d
\sum\limits_{\phi'}
\lambda_{\phi',i}\lambda_{\phi',j}
\langle\phi,b^i\phi'\rangle
\langle b^j\phi',\psi\rangle\,.
\end{align}
Observe that the domain of each of the operators $P_1(\mathcal{L})$ and $P_1(\mathcal{L})^*$ is the intersection of the domains of the operators $\mathcal{L}_i$. Thus, they are both the same.
Hence, the normality of $P_1(\mathcal{L})$ is equivalent to the equality of 
expressions (\ref{normalityCond1}) and (\ref{normalityCond2}), as asserted.
\end{proof}

\begin{remark}
Assume that $b^1,\dots,b^d\in \mathcal{D}(U)$.
Then  Proposition \ref{normalityCondition} says that the eigenvalues 
$\lambda_{\phi,i}$ for $\phi\in\mathcal{E}$, $i=1,\dots,d$ satisfy an algebraic condition  given by Proposition \ref{normalityCondition}, if and only if $P_1(\mathcal{L})$ is normal. The reason why this condition is algebraic, is because the functions $b^1,\dots,b^d$ are test functions.
\end{remark}

\begin{Lemma}\label{P1diagonalisable}
The operator $P_1(\mathcal{L})$ is diagonalisable on $L^2(U)$, and its spectrum is a point spectrum, if each eigenspace of $\mathcal{L}_i$ is invariant under each $M_{b^i}$, where $i=1,\dots,d$. 
\end{Lemma}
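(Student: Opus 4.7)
The plan is to diagonalise $P_1(\mathcal{L})$ simultaneously with the commuting self-adjoint family $\mathcal{L}_1,\dots,\mathcal{L}_d$ by restricting to their joint eigenspaces and then invoking finite-dimensional spectral theory. By Lemma~\ref{SchwartzLemma} the $\mathcal{L}_i$ commute pairwise on $\mathcal{D}(U)$, and by Theorem~\ref{L2-Spectrum} together with Remark~\ref{subsetting} each is self-adjoint on $L^2(U)$ with pure point spectrum whose component multiplicities are finite. Hence $L^2(U)$ decomposes orthogonally as
\[
L^2(U)=\bigoplus_{\underline{\lambda}}V_{\underline{\lambda}},\qquad V_{\underline{\lambda}}=\bigcap_{i=1}^d\ker(\mathcal{L}_i-\lambda_i),
\]
and each joint eigenspace, being a tensor product of finite-multiplicity component eigenspaces, is finite-dimensional.

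I would then use the invariance hypothesis to see that every $V_{\underline{\lambda}}$ is preserved by $P_1(\mathcal{L})$. Read in the natural cross-index form---every eigenspace of $\mathcal{L}_i$ being invariant under every $M_{b^j}$---each $V_{\underline{\lambda}}$ is invariant under each $M_{b^j}$ as an intersection of $M_{b^j}$-invariant subspaces. Since $\mathcal{L}_j$ acts on $V_{\underline{\lambda}}$ as the scalar $\lambda_j$,
\[
P_1(\mathcal{L})\bigr|_{V_{\underline{\lambda}}}=\sum_{j=1}^{d}\lambda_j\,M_{b^j}\bigr|_{V_{\underline{\lambda}}}
\]
is a real linear combination (the $\lambda_j$ being real by self-adjointness of $\mathcal{L}_j$, and the $b^j$ real-valued, as is implicit in the form of $P_1(\mathcal{L})^*$ recorded in~(\ref{P1adjoint})) of finite-dimensional self-adjoint multiplication operators, hence itself self-adjoint on the finite-dimensional Hilbert space $V_{\underline{\lambda}}$. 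The finite-dimensional spectral theorem furnishes an orthonormal eigenbasis of $V_{\underline{\lambda}}$ consisting of eigenvectors of $P_1(\mathcal{L})|_{V_{\underline{\lambda}}}$, with eigenvalues of the form $\sum_{j=1}^{d}\lambda_j c_j$ for values $c_j$ taken by $b^j$. Concatenating these bases across all $\underline{\lambda}$ produces an orthonormal eigenbasis of $L^2(U)$ for $P_1(\mathcal{L})$, which is exactly the diagonalisability and point-spectrum statement.

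The main obstacle I anticipate is calibrating the invariance hypothesis strongly enough for the reduction to close up. Under a strictly index-matched reading, in which only $V_{\lambda,i}$ is preserved by $M_{b^i}$, one still obtains $[M_{b^i},\mathcal{L}_i]=0$ and hence self-adjointness of the summand $M_{b^i}\mathcal{L}_i$, but $M_{b^j}$ with $j\neq i$ need not preserve $V_{\underline{\lambda}}$, and the restriction of $M_{b^j}$ to an infinite-dimensional eigenspace of $\mathcal{L}_i$ can in principle carry continuous spectrum, so the point-spectrum conclusion would slip. The cross-index reading adopted above, or equivalently the natural sufficient condition that each $b^i$ depend only on the $i$-th coordinate so that $M_{b^i}$ acts trivially on the other tensor factors, is precisely what collapses the problem to the finite-dimensional blocks $V_{\underline{\lambda}}$ and makes the argument run as sketched.
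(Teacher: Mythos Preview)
Your argument is correct under the cross-index reading of the hypothesis, and you are commendably explicit about that interpretive choice. The route, however, differs genuinely from the paper's.

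The paper does \emph{not} work with the joint eigenspaces $V_{\underline{\lambda}}=\bigcap_i\ker(\mathcal{L}_i-\lambda_i)$. Instead it reuses the finite-dimensional blocks $V_\phi$ from the proof of Proposition~\ref{P2Diagonalisable}, obtained from the expansion (\ref{P1expansion}) and the fact that the $b^i$ are test functions: applying $M_{b^i}\mathcal{L}_i$ to a basis vector $\phi\in\mathcal{E}$ produces only finitely many $\phi'$. On each $V_\phi$ the restriction of $M_{b^i}\mathcal{L}_i$ is written as $C_{\phi,i}=D_{\phi,i}B_{\phi,i}$ with $D_{\phi,i}$ diagonal and $B_{\phi,i}$ symmetric; the paper then invokes the \emph{detailed balance} identity $C_{\phi,i}D_{\phi,i}=D_{\phi,i}C_{\phi,i}^\top$ to conclude that each $C_{\phi,i}$ is diagonalisable, and uses the invariance hypothesis to argue that the $C_{\phi,i}$ mutually commute, whence they are simultaneously diagonalisable.

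Your approach bypasses detailed balance entirely: once $V_{\underline{\lambda}}$ is $M_{b^j}$-invariant for every $j$, the restriction $P_1(\mathcal{L})|_{V_{\underline{\lambda}}}=\sum_j\lambda_j M_{b^j}|_{V_{\underline{\lambda}}}$ is visibly self-adjoint and the finite-dimensional spectral theorem finishes the job. This is shorter and more transparent. What the paper's formulation buys is the explicit $C=DB$ structure, which is later exploited in the proof of Theorem~\ref{MarkovSemigroup} to build the invariant measure $\pi_\phi$ blockwise; your decomposition would give the same conclusion but the detailed-balance bookkeeping would have to be redone there. As for the hypothesis: the paper's commutativity step (that the $C_{\phi,i}$ commute on $V_\phi$) in fact also needs control of $[\mathcal{L}_i,M_{b^j}]$ for $i\neq j$, so the cross-index reading you adopt is effectively what both arguments require.
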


\begin{proof}
Observe that the restrictions $C_{\phi,i}$ of
the operators $M_{b^i}\mathcal{L}_i$ onto the invariant finite-dimensional subspaces $V_\phi$ given by
\[
P_1(\mathcal{L})\phi
=\sum\limits_{i=1}^d\lambda_{\phi,i}\sum\limits_{\phi'}\langle\phi b^i,\phi'\rangle\phi'
\]
are all diagonalisable, because
\[
C_{\phi,i}=D_{\phi,i}B_{\phi,i}
\]
with $B_{\phi,i}$ a symmetric and $D_{\phi,i}$ a diagonal matrix, and
\[
C_{\phi,i}D_{\phi,i}=D_{\phi,i}B_{\phi,i}D_{\phi,i}=D_{\phi,i}C_{\phi,i}^\top\,,
\]
which is known as the \emph{detailed balance property} \cite[(4.1) and (6.15)]{Kampen}.

\smallskip
Since $V_\phi$ is a direct sum of eigenspaces of $P_1(\mathcal{L})$, it follows
under the hypothesis, that the restrictions of the operators $M_{b^i}\mathcal{L}_i$ to $V_\phi$  mutually commute. Hence, since these operators are themselves diagonalisable, they are also
 simultaneously diagonalisable. This implies the diagonalisablity of $P_1(\mathcal{L})$ and the point spectrum property, as asserted.
 \end{proof}

\begin{thm}\label{Pdiagonalisable}
Let $P(\mathcal{L})$ be the operator (\ref{OurOperator}) with $a^{i,j},b^i,c\in \mathcal{D}(U)$ for $i,j=1,\dots,d$ on $L^2(U)$ for $U\subset F$ an open domain.
Assume further that the eigenspaces of $\mathcal{L}$ are invariant under $M_{b^i}$ for $i=1,\dots,d$, or that $P_1(\mathcal{L})$ is normal.
Moreover, assume that
\[
P_k(\mathcal{L})P_\ell(\mathcal{L})=P_\ell(\mathcal{L})P_k(\mathcal{L})
\]
holds true for $k,\ell=0,1,2$.
Then $P(\mathcal{L})$ is unitarily diagonalisable, its  spectrum is a point spectrum, and all eigenvalues have only finite multiplicity. 
\end{thm}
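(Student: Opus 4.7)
The plan is to use Proposition \ref{P2Diagonalisable} to decompose $L^2(U) = \bigoplus_\phi V_\phi$ into finite-dimensional $P_2(\mathcal{L})$-invariant subspaces, to show that $P_1(\mathcal{L})$ and $P_0(\mathcal{L}) = M_c$ preserve this decomposition by virtue of the commutativity assumption, and then to apply simultaneous unitary diagonalisation of three pairwise commuting diagonalisable operators on each finite-dimensional piece $V_\phi$. The commutation hypothesis $P_k P_\ell = P_\ell P_k$ is the linchpin ensuring that all three summands respect the same decomposition.

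First, since $P_2(\mathcal{L})$ is self-adjoint by Proposition \ref{P2Diagonalisable}, the relations $P_k P_2 = P_2 P_k$ for $k=0,1$ imply by adjunction that $P_k^* P_2 = P_2 P_k^*$ as well, so each $V_\phi$ is invariant under $P_0, P_0^*, P_1$ and $P_1^*$. The restriction $P_k(\mathcal{L})|_{V_\phi}$ is then unitarily diagonalisable for each $k$: for $k=2$, this is Proposition \ref{P2Diagonalisable}; for $k=0$, the operator $M_c$ is normal on $L^2(U)$ with adjoint $M_{\bar c}$, and since $V_\phi$ is invariant under both, the restriction is a normal operator on the finite-dimensional Hilbert space $V_\phi$; for $k=1$, under the normality hypothesis, the same $V_\phi$-invariance of $P_1$ and $P_1^*$ yields normality of $P_1|_{V_\phi}$, whereas under the first hypothesis, Lemma \ref{P1diagonalisable} supplies a diagonalising basis of $V_\phi$ for $P_1$. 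Since the three restrictions commute pairwise (inherited from $P_k P_\ell = P_\ell P_k$) and are each diagonalisable on the finite-dimensional $V_\phi$, standard linear algebra gives a common basis of eigenvectors; in the cases where all three restrictions are normal, this basis can be chosen orthonormal. Collecting such bases over all $\phi$ produces an orthonormal eigenbasis of $L^2(U)$ for $P(\mathcal{L}) = P_2 + P_1 + P_0$, establishing unitary diagonalisability and the point-spectrum property.

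The main obstacle is the finite multiplicity claim, since a priori an eigenvalue $\mu$ of $P(\mathcal{L})$ could appear in the spectrum of $P(\mathcal{L})|_{V_\phi}$ for infinitely many distinct $V_\phi$. To rule this out, I would argue that the coefficients $a^{ij}, b^i, c \in \mathcal{D}(U)$ are bounded in $L^\infty$, so the multiplier norms entering $P_0$, $P_1$, and $P_2$ are uniformly controlled; meanwhile, on a high-frequency $V_\phi$ each $\mathcal{L}_i$ has eigenvalues growing without bound, as exploited in Proposition \ref{PoincareInequality}. Ellipticity then forces the spectrum of $P_2|_{V_\phi}$ to escape every bounded interval, while $P_1|_{V_\phi}$ contributes a perturbation of strictly lower order in the $\mathcal{L}_i$-eigenvalues. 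Consequently, the spectrum of $P|_{V_\phi}$ lies in a shrinking neighbourhood of the $P_2$-spectrum on $V_\phi$, and only finitely many $V_\phi$ can contain any given $\mu$ in their spectrum, yielding the asserted finite multiplicity.
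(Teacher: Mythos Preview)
There is a genuine gap in the step where you pass from $P_kP_2=P_2P_k$ to the invariance of the blocks $V_\phi$ under $P_k$ and $P_k^*$. Commutation with the self-adjoint operator $P_2$ only guarantees that $P_k$ preserves the \emph{eigenspaces} of $P_2$, not the particular invariant subspaces $V_\phi$ constructed in the proof of Proposition~\ref{P2Diagonalisable}; those $V_\phi$ are spans of finitely many $\phi\in\mathcal{E}$ determined by the support pattern of the matrix entries $\langle a^{ij}\phi,\phi''\rangle$, and a single eigenspace of $P_2$ may well spread over infinitely many such blocks. Conversely, nothing in Proposition~\ref{P2Diagonalisable} asserts that the eigenspaces of $P_2$ are finite-dimensional, so switching to the eigenspace decomposition does not rescue the finite-dimensional reduction either. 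The paper sidesteps this by computing the analogue of the block matrix directly for the \emph{full} operator $P(\mathcal{L})$: since $a^{ij},b^i,c\in\mathcal{D}(U)$, the expansion $P(\mathcal{L})\phi=\sum_{\phi'}\langle\phi[\cdots],\phi'\rangle\phi'$ (equation~(\ref{Pphi})) is again a finite sum for every $\phi\in\mathcal{E}$, producing finite-dimensional $P(\mathcal{L})$-invariant subspaces without invoking the commutation hypothesis at this stage; only afterwards is commutation used to simultaneously diagonalise the three summands on each block.

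Your finite-multiplicity argument has a second gap: it relies on ellipticity of $P_2(\mathcal{L})$, which is \emph{not} among the hypotheses of Theorem~\ref{Pdiagonalisable}. The paper's route instead reads the multiplicities off the block structure~(\ref{Pphi}) together with the finite multiplicity of the eigenvalues $\lambda_{\phi,i}$ of the component operators established in Theorem~\ref{ComponentSpectrum}; no positivity of the quadratic part is needed.
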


\begin{proof}
According to Proposition \ref{P2Diagonalisable}, $P_2(\mathcal{L})$ is unitarily diagonalisable, and according to Lemma \ref{P1diagonalisable}, $P_1(\mathcal{L})$ is diagonalisable, or in the other case, $P_1(\mathcal{L})$ is unitarily diagonalisable. Also, $P_0(\mathcal{L})$ is unitarily diagonalisable.
Since these operators mutually commute, they are simultaneously diagonalisable, and also unitarily diagonalisable. 
This now proves that $P(\mathcal{L})$ is unitarily diagonalisable.

\smallskip
Similarly as in the proof of Proposition \ref{P2Diagonalisable}, arrive at the equality
\begin{align}\label{Pphi}
P(\mathcal{L})\phi
=\sum\limits_{\phi'}
\left\langle
\phi\left[\sum\limits_{i=1}^d
\left(\sum\limits_{j=1}^d
\lambda_{\phi,i}a^{ij}\lambda_{\phi',j}\right)+\lambda_{\phi,i}b^i
+c
\right],\phi'
\right\rangle\phi'\,,
\end{align}
which gives, similarly as in the proof of Proposition \ref{P2Diagonalisable}, the restriction of $P(\mathcal{L})$ to a finite-dimensional subspace $V_\phi$ of $L^2(U)$ as left-multiplication with the matrix
\[
W_\phi=\sum\limits_{i=1}^d
\sum\limits_{j=1}^d
C_{\phi,ij}+C_{\phi,i}+C_\phi\,,
\]
where
\begin{align}\label{matrices}
C_{\phi,ij}=D_{\phi,i}A_{\phi,ij}D_{\phi,j},\;
C_{\phi,i}=D_{\phi,i}B_{\phi,i}
\end{align}
with $D_{\phi,i}$, $D_{\phi,j}$ diagonal and $A_{\phi,ij},B_{\phi,i},C$ symmetric. 
The multiplicity of the eigenvalues of $P(\mathcal{L})$ is determined by the multiplicity of the eigenvalues $\lambda_{\phi,i}$ of $\pi_{i,*}\mathcal{L}_i$, which are finite for $i=1,\dots, d$, cf.\ Theorem \ref{ComponentSpectrum}.
Using (\ref{Pphi}) and the block structure given by the invariant subspaces $V_\phi$, this implies the finite multiplicity  of the eigenvalues of $P(\mathcal{L})$,  and also that its spectrum is a point spectrum.
\end{proof}

\begin{remark}
In the context of random walks, the condition $C=DB$ with $B$ symmetric and $D$ a diagonal matrix is called
\emph{detailed balance property}, because 
\[
D^{-1}C=B=B^\top=C^\top D^{-1}
\;\Leftrightarrow\;
CD=DC^\top
\]
corresponds to \cite[eq.\ (4.2)]{Kampen} with $D$ playing the role of a stationary distribution.
It will here also be said that $P_1(\mathcal{L})$ satisfies the detailed balance condition, because of (\ref{matrices}).
Similarly, we can also say that $P_2(\mathcal{L})$ satisfies a generalised form of the detailed balance condition, so that the operator $P(\mathcal{L})$ can be viewed as belonging to a kind of a balanced process.
\end{remark}

The space of $L^q$-functions on $\closure_\delta U$ satisfying the boundary condition $u|_\delta U=0$ will be denoted as $L^q_0(U)$ for $0<q\le\infty$.

\begin{cor}\label{eigenTestFunctions}
Under the hypotheses of Theorem \ref{Pdiagonalisable}, the eigenfunctions of $P(\mathcal{L})$ are in $\mathcal{D}(U)$. In this case,  $L_0^2(U)$ is invariant under $P(\mathcal{L})$ acting as an unbounded operator,  and the restricted operator is also unitarily diagonalisable with  point spectrum, and with eigenfunctions in $\mathcal{D}_0(U)$. 
\end{cor}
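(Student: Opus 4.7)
The plan is to treat the three assertions of the corollary in sequence, leaning on the block-diagonal structure established in the proof of Theorem~\ref{Pdiagonalisable}. For the first, to show that eigenfunctions of $P(\mathcal{L})$ on $L^2(U)$ lie in $\mathcal{D}(U)$, I would revisit the expansion~(\ref{Pphi}). Since $a^{ij}, b^i, c \in \mathcal{D}(U)$ are test functions, each admits a finite expansion over the orthonormal basis $\mathcal{E}$; hence for every $\phi \in \mathcal{E}$ only finitely many $\phi'' \in \mathcal{E}$ give a nonzero contribution in~(\ref{Pphi}), and the invariant subspace $V_\phi$ is literally a finite-dimensional subspace of $\mathcal{D}(U)$, because each basis vector of $\mathcal{E}$ is a tensor product of Kozyrev wavelets~(\ref{KozyrevWavelet}) and graph eigenfunctions, which are locally constant with compact support in $U$. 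Every eigenfunction of $P(\mathcal{L})$ lies in some $V_\phi$, and is therefore in $\mathcal{D}(U)$.

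For the invariance of $L_0^2(U)$, I would first extend $P(\mathcal{L})$ to an unbounded operator on $L^2(\closure_\delta U)$ by extending the coefficients by zero from $U$ to the clopen (Lemma~\ref{clopen}) superset $\closure_\delta U$. By Remark~\ref{subsetting}, Theorem~\ref{Pdiagonalisable} applies verbatim with $\closure_\delta U$ in place of $U$, so on $L^2(\closure_\delta U)$ the extended operator remains unitarily diagonalisable and its eigenfunctions lie in $\mathcal{D}(\closure_\delta U)$ by the first step. The invariance then reduces to checking that $P(\mathcal{L})$ preserves the boundary condition: concretely, for $u \in L_0^2(U)$ and $x=(\xi_1,\dots,\xi_d) \in U$, I would expand $(P(\mathcal{L})u)(x)\int_{\delta_i U}L_i(\xi_i,\eta_i)\,d\eta_i$ using the Leibniz-like rule of Lemma~\ref{LeibnizLike}, exploiting both the compact support of the coefficients inside $U$ and the vanishing of $u$ against the boundary integrals, to reduce the expression to terms that vanish pointwise.

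Finally, since $L_0^2(U)$ is closed in $L^2(\closure_\delta U)$ (by the same argument as in Proposition~\ref{SobolevClosedSubspace}) and $P(\mathcal{L})$-invariant by the second step, the restriction of the unitarily diagonalisable self-adjoint operator $P(\mathcal{L})$ to this invariant closed subspace is automatically unitarily diagonalisable with point spectrum, and its eigenfunctions, being simultaneously eigenfunctions of the ambient operator, lie in $\mathcal{D}(\closure_\delta U) \cap L_0^2(U) = \mathcal{D}_0(U)$ by definition of the latter. The genuinely hard step is the invariance check in the second paragraph: the boundary condition couples values of $u$ on $U$ to the $L_i$-weighted connectivity between $U$ and $\delta U$, and establishing that $P(\mathcal{L})$ respects this coupling requires the careful bookkeeping indicated above.
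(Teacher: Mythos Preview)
Your first paragraph matches the paper's argument exactly: the eigenfunctions lie in the finite-dimensional blocks $V_\phi\subset\mathcal{D}(U)$ coming from~(\ref{Pphi}).

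For the invariance of $L_0^2(U)$ and the diagonalisability of the restriction, however, the paper proceeds differently. Rather than verifying that $P(\mathcal{L})$ preserves the boundary condition by a Leibniz-type computation, the paper constructs an adapted orthonormal eigenbasis for the component operators \emph{inside} $L_0^2(U)$: the polywavelets supported in $U$ together with graph eigenfunctions for the induced coordinate subgraphs, constrained by $f|_{\delta U}=0$. Remark~\ref{subsetting} guarantees that these span $\mathcal{L}_i$-invariant finite-dimensional pieces, and then the entire machinery of Theorem~\ref{Pdiagonalisable} is rerun on this adapted basis. Invariance, unitary diagonalisability, the point-spectrum property, and the fact that eigenfunctions lie in $\mathcal{D}_0(U)$ all fall out simultaneously.

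Your route via direct verification is not merely incomplete; it faces a real obstacle that the paper's approach avoids. The boundary condition $u|_{\delta_iU}=0$ forces $u(x)=0$ only at those $x\in U$ whose $i$-th coordinate $\xi_i$ is $L_i$-connected to $\pi_i(F\setminus U)$. Now $(\mathcal{L}_iu)(x)=-\int L_i(\xi_i,\eta_i)\,u(\xi_1,\dots,\eta_i,\dots,\xi_d)\,d\eta_i$ at such a point picks up contributions from $\eta_i$ lying in discs of $\mathcal{U}_i$ that are connected to $U_i(\xi_i)$ but \emph{not} to $\delta_iU$; at those $\eta_i$ the integrand need not vanish. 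Hence $\mathcal{L}_i$ by itself need not preserve the $i$-th boundary condition, and multiplying by the coefficients $a^{ij},b^i,c$ (which are supported in $U$ but need not vanish on the relevant interior set) does not repair this. The Leibniz rule of Lemma~\ref{LeibnizLike} does not give you enough cancellation here. The paper's structural argument sidesteps the issue entirely by working from the start with a basis already adapted to the boundary condition, so that one never has to track how an individual $\mathcal{L}_i$ interacts with $\delta_iU$.
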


\begin{proof}
From the representation (\ref{Pphi}), i.e.\ the $P(\mathcal{L})$-invariant finite-dimensional subspaces $V_\phi\subset L^2(U)$, it follows that the eigenfunctions of $P(\mathcal{L})$ are linear combinations of 
the functions $\phi\in\mathcal{E}$, i.e.\ they belong to $\mathcal{D}(U)$. 

\smallskip
The space $\mathcal{D}_0(U)$ certainly contains the polywavelets supported in $U$, and the graph test functions $f$ w.r.t.\ the coordinate subgraphs conditioned by $f|_{\delta U}=0$. Notice that the vertices in this case do not always correspond to $p$-adic discs, similarly as in the case of Mumford curves \cite[Section 4.1]{brad_HeatMumf}.
The latter graphs define in each coordinate a finite-dimensional subspace invariant under $\mathcal{L}_i$, cf.\ Remark \ref{subsetting}. It follows that the proof of Theorem \ref{Pdiagonalisable} carries over to $P(\mathcal{L})$ restricted to
$L_0^{2}$, which is thus seen to be invariant in the unbounded sense, and with  eigenfunctions being test functions, hence in $\mathcal{D}_0(U)$, also in this case. This proves the assertions.
\end{proof}

\section{Heat kernels and Green function}

Here, the following is assumed:

\begin{Ass}\label{assumption}
It is assumed that $P(\mathcal{L})$ is elliptic, satisfies the hypothesis of Theorem \ref{Pdiagonalisable}, and that its eigenvalues are non-negative. 
\end{Ass}

The action of $e^{-tP(\mathcal{L})}$ on $W^{k,2}_0(U)$ for $k\in\mathds{N}$ is of interest in the study of diffusion under boundary conditions. These are Hilbert spaces according to Corollary \ref{SobolevHilbertSpace} and Proposition \ref{SobolevClosedSubspace}.

\begin{Lemma}
The semigroup $e^{-tP(\mathcal{L})}$ acts compactly on $W_0^{k,2}(U)$ for $t>0$ and $k\in\mathds{N}$.
\end{Lemma}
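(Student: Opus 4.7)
The plan is to express $e^{-tP(\mathcal{L})}$ via the spectral decomposition from Theorem \ref{Pdiagonalisable} and Corollary \ref{eigenTestFunctions}, and prove compactness on $W^{k,2}_0(U)$ by exhibiting it as an operator-norm limit of finite-rank operators. By those two results, $L_0^2(U)$ admits an orthonormal eigenbasis $\{e_n\}_{n\ge 1}$ of $P(\mathcal{L})$ with eigenfunctions in $\mathcal{D}_0(U)\subset W^{k,2}_0(U)$, eigenvalues $0\le\lambda_1\le\lambda_2\le\dots$ of finite multiplicity (non-negative by Assumption \ref{assumption}), and each $e_n$ lies in one of the finite-dimensional $P(\mathcal{L})$-invariant blocks $V_{\phi(n)}$ constructed in the proof of Theorem \ref{Pdiagonalisable}, which are spanned by finitely many elements of $\mathcal{E}$. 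The first task is to show $\lambda_n\to\infty$: ellipticity together with Theorem \ref{energy} and Proposition \ref{PoincareInequality} yields a lower bound $\lambda_n\ge \theta'\,m_{\phi(n)}-C$ with $m_\phi:=\min_{\phi'\in V_\phi}\sum_i\lambda_{\phi',i}^2$, while Theorem \ref{ComponentSpectrum} ensures that only finitely many $\phi'\in\mathcal{E}$ satisfy $\sum_i\lambda_{\phi',i}^2\le N$, so only finitely many $\lambda_n$ lie below any given bound $N$.

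Second, I would control the $W^{k,2}$-norm of each eigenfunction. Since every $\phi'\in\mathcal{E}$ is a tensor product of eigenfunctions of the pushforward operators $\pi_{i,*}\mathcal{L}_i$, Theorem \ref{ComponentSpectrum} shows that $\mathcal{L}^{\underline{\ell}}$ acts diagonally on $\mathcal{E}$ with eigenvalue $\lambda_{\phi'}^{\underline{\ell}}=\prod_i\lambda_{\phi',i}^{\ell_i}$; thus $V_{\phi(n)}$ is also $\mathcal{L}^{\underline{\ell}}$-invariant, and the $\mathcal{E}$-basis of $V_{\phi(n)}$ is automatically $W^{k,2}$-orthogonal. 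The expansion $e_n=\sum_{\phi'}c^{(n)}_{\phi'}\phi'$ with $\sum_{\phi'}|c^{(n)}_{\phi'}|^2=1$ gives
\[
\|e_n\|_{W^{k,2}_0(U)}^2 \;=\; \sum_{\phi'}|c^{(n)}_{\phi'}|^2\sum_{|\underline{\ell}|\le k}(\lambda_{\phi'}^{\underline{\ell}})^2 \;\le\; \max_{\phi'\in V_{\phi(n)}}\sum_{|\underline{\ell}|\le k}(\lambda_{\phi'}^{\underline{\ell}})^2,
\]
which is polynomially bounded in the largest component eigenvalue appearing in $V_{\phi(n)}$. Combined with the ellipticity bound on $\lambda_n$, this yields an estimate $\|e_n\|_{W^{k,2}_0(U)}\le p(\lambda_n)$ for some polynomial $p$ depending only on $k$ and $d$.

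Third, assembling everything, define the finite-rank operator $T_Nu:=\sum_{n\le N}e^{-t\lambda_n}\langle u,e_n\rangle_{L^2}e_n$ on $W^{k,2}_0(U)$, which is bounded thanks to the continuous inclusion $W^{k,2}_0(U)\hookrightarrow L_0^2(U)$ and has image in $\mathcal{D}_0(U)$. For $u$ with $\|u\|_{W^{k,2}_0(U)}\le 1$ the triangle inequality gives
\[
\bigl\|e^{-tP(\mathcal{L})}u - T_Nu\bigr\|_{W^{k,2}_0(U)} \;\le\; \sum_{n>N}e^{-t\lambda_n}\bigl|\langle u,e_n\rangle_{L^2}\bigr|\,\|e_n\|_{W^{k,2}_0(U)} \;\le\; \sum_{n>N}e^{-t\lambda_n}\,p(\lambda_n),
\]
and since $\lambda_n\to\infty$ with bounded multiplicity and $e^{-t\lambda_n}$ dominates any polynomial in $\lambda_n$, the right-hand side tends to $0$ as $N\to\infty$. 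Hence $e^{-tP(\mathcal{L})}$ is the operator-norm limit of compact operators on $W^{k,2}_0(U)$, and is therefore compact. The main obstacle I anticipate is producing the polynomial bound $\|e_n\|_{W^{k,2}_0(U)}\le p(\lambda_n)$ of the second paragraph: relating the maximal component eigenvalue in the block $V_{\phi(n)}$ to $\lambda_n$ itself requires a quantitative form of ellipticity, together with the observation that on sufficiently fine-scale Kozyrev wavelets the test-function coefficients $a^{ij},b^i,c$ act by scalar multiplication, rendering the corresponding blocks essentially one-dimensional and making the polynomial relationship transparent.
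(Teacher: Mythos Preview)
The paper's proof is a single sentence: it asserts that $e^{-tP(\mathcal{L})}$ is trace-class on $W_0^{k,2}(U)$ by Assumption \ref{assumption}, and hence compact. No eigenvalue growth estimate, no Sobolev-norm control of eigenfunctions, and no finite-rank approximation are written out; the paper simply takes the point-spectrum and finite-multiplicity structure from Theorem \ref{Pdiagonalisable} together with the non-negativity of the eigenvalues as sufficient to declare the semigroup trace-class.

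Your route is genuinely different and far more explicit: you build finite-rank truncations and show operator-norm convergence in $W_0^{k,2}(U)$, supplying along the way a polynomial bound $\|e_n\|_{W_0^{k,2}(U)}\le p(\lambda_n)$ via the block structure of Theorem \ref{Pdiagonalisable}. This buys an honest argument rather than an appeal to an assumption, and the Sobolev-norm control of eigenfunctions is of independent interest. There is, however, a gap in your last step: to conclude that $\sum_{n>N}e^{-t\lambda_n}p(\lambda_n)\to 0$ you need the full series $\sum_n e^{-t\lambda_n}p(\lambda_n)$ to converge, and for that ``$\lambda_n\to\infty$ with bounded multiplicity'' is insufficient (consider $\lambda_n=\log n$). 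What is actually required is a polynomial bound on the counting function $\#\{n:\lambda_n\le\Lambda\}$. That bound \emph{is} available from the explicit Kozyrev eigenvalue formula in Theorem \ref{ComponentSpectrum}---roughly $\lambda_{\phi,i}\sim p^{n(1+\alpha_i)}$ against $\sim p^{n}$ wavelets at scale $n$ in each coordinate, combined with your ellipticity lower bound $\lambda_n\gtrsim\sum_i\lambda_{\phi,i}^2$---but you should invoke it explicitly rather than rely on the phrase ``bounded multiplicity''.
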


\begin{proof}
The operators $e^{-tP(\mathcal{L})}$ for $t>0$ are trace-class as operators on the Hilbert spaces $W_0^{k,2}$ by Assumption \ref{assumption}.
\end{proof}

Let $x_0\in U$. The Green function for the diffusion equation
\begin{align}\label{PDiffusion}
\frac{\partial}{\partial t}u(x,t)+P(\mathcal{L})u(x,t)=0
\end{align}
on $U$ under the boundary condition $u(\cdot,0)|_{\delta U}=0$
is given as a solution of the following Poisson equation:
\begin{align}\label{PoissonGreen}
\begin{cases}\;P(\mathcal{L})G(x,x_0)\!\!\!\!&=\delta(x-x_0),\quad x\in U
\\
\hfill G(x,x_0)\!\!\!\!&=0,\quad\quad\qquad\;\;x\in\delta U\,,
\end{cases}
\end{align}
where the constants $\mu,\gamma$ as in (\ref{bvp_perturbed}) are not going to be required, because of the stronger assumptions on $P(\mathcal{L})$.
The Green function  is related to the heat kernel via
\begin{align}\label{GreenFunction}
G(x,y)=\int_0^\infty h(x,y,t)\,dt
\end{align}
with
\begin{align*}
h(x,y,t)=\sum\limits_{\psi\atop\lambda_\psi>0}
e^{-\lambda_\psi t}\psi(x)\overline{\psi(y)}
\end{align*}
being the part of the heat kernel
\begin{align}\label{heatKernel}
H(x,y,t)=h(x,y,t)+\sum\limits_{\psi\atop\lambda_\psi=0}\psi(x)\overline{\psi(y)}
\end{align}
associated with (\ref{PDiffusion}), and where $\psi$ runs through an eigenbasis of $W^{k,2}_0(U)$ for $P(\mathcal{L})$. According to Corollary \ref{eigenTestFunctions}, these exist and are test functions.
The function $H(x,y,t)$, if convergent, is formally the heat kernel for $P(\mathcal{L})$-diffusion under boundary conditions with $U\subseteq F$ open.
\newline

In order to prove the existence 
of the Green function, the strategy will be to prove the convergence of 
$H(x,y,t)$ for $t\ge0$, as well as of the right hand side of (\ref{GreenFunction}) 
in the generality of (\ref{PoissonGreen}).

\subsection{Markov property}
In order to rightly say that $P(\mathcal{L})$ defines a diffusion,  the Markovian semigroup property is established first under Assumption \ref{assumption}.

\begin{thm}\label{MarkovSemigroup}
The operator $-P(\mathcal{L})$ generates a contraction semigroup $e^{-tP(\mathcal{L})}$ with $t\ge0$ on $W_0^{k,2}(U)$ for $k\in\mathds{N}$, and the action satisfies the Markov property if $k\ge2$.
\end{thm}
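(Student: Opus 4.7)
The strategy is to generate the semigroup on $L^2_0(U)$ via functional calculus on the spectral decomposition of Theorem \ref{Pdiagonalisable}, lift it to the Sobolev scale, and derive the Markov property from a Beurling--Deny--style argument exploiting the non-negativity of the kernels $L_i$.

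First, under Assumption \ref{assumption}, Theorem \ref{Pdiagonalisable} and Corollary \ref{eigenTestFunctions} present $P(\mathcal{L})$ on $L^2_0(U)$ as self-adjoint and unitarily diagonalisable with non-negative point spectrum of finite multiplicity and an orthonormal eigenbasis $\{\psi_n\}\subset\mathcal{D}_0(U)$. Functional calculus defines $T(t):=e^{-tP(\mathcal{L})}$ by $T(t)\psi_n=e^{-t\lambda_n}\psi_n$, which is a strongly continuous contraction semigroup on $L^2_0(U)$ generated by $-P(\mathcal{L})$. To lift to $W_0^{k,2}(U)$ I would exploit that each element of the joint eigenbasis $\mathcal{E}$ of $\{\mathcal{L}_i\}$ is a simultaneous eigenfunction, so that for $u=\sum_\phi c_\phi\phi$
\[
\|u\|_{W_0^{k,2}(U)}^2=\sum_\phi|c_\phi|^2 w_\phi,\qquad w_\phi=\sum_{|\underline{\ell}|\le k}\prod_{i=1}^d\lambda_{\phi,i}^{2\ell_i},
\]
making $\mathcal{E}$ orthogonal in $W_0^{k,2}(U)$. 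The $T(t)$-invariant finite-dimensional subspaces $V_\phi$ from the proof of Theorem \ref{Pdiagonalisable} are spanned by disjoint subsets of $\mathcal{E}$, yielding an orthogonal Hilbert direct sum $W_0^{k,2}(U)=\bigoplus_\phi V_\phi$ preserved by $T(t)$.

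The main technical obstacle is contractivity in the $W_0^{k,2}$-norm, since $P(\mathcal{L})$ is self-adjoint in $L^2$ but generally not with respect to the weighted $W_0^{k,2}$-inner product ($\mathcal{L}^{\underline{\ell}}$ does not commute with multiplication by non-constant $a^{ij}$). I would bypass this by equipping $W_0^{k,2}(U)$ with the equivalent Hilbert norm $u\mapsto\|(I+P(\mathcal{L}))^{k/2}u\|_{L^2}$, in which $T(t)$ is manifestly a contraction by functional calculus. Equivalence with $\|\cdot\|_{W_0^{k,2}(U)}$ would follow inductively from Theorem \ref{energy}: the $L^\infty$-bound on $a^{ij},b^i,c$ together with Cauchy--Schwarz gives one direction, while ellipticity, the $p$-adic Poincaré inequality of Proposition \ref{PoincareInequality}, and an iterated G\r{a}rding-type bound give the reverse.

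Finally, for $k\ge 2$, $W_0^{k,2}(U)$ lies in the classical domain of the second-order generator $P(\mathcal{L})$, and $T(t)$ realises the solution semigroup of $\partial_tu+P(\mathcal{L})u=0$ there. I would derive the Markov property through the Beurling--Deny criteria applied to the symmetric bilinear form $B$ of Theorem \ref{energy}: non-negativity of the kernels $L_i\ge 0$ and positive semi-definiteness of $(a^{ij})$ yield both $B[u^+,u^+]\le B[u,u]$ and $B[u\wedge 1,u\wedge 1]\le B[u,u]$, so $B$ is a Dirichlet form whose associated semigroup is Markovian. Combined with the component-wise Feller property of Theorem \ref{ComponentFeller}, this delivers the Markov property of $T(t)$ on $W_0^{k,2}(U)$.
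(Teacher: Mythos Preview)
Your generation and Sobolev-lift strategy is reasonable and broadly parallel to the paper's. On contractivity you take a genuinely different route: the paper argues via a direct resolvent estimate and Hille--Yosida in the original $W_0^{k,2}$-norm, whereas your equivalent-norm trick with $\|(I+P(\mathcal{L}))^{k/2}\cdot\|_{L^2}$ yields contraction only in that new norm, hence merely uniform boundedness in the original one---so this does not quite deliver the contractivity as stated.

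The substantive gap is in your Markov argument. The form $B_2[u,u]=\int_U\sum_{i,j}a^{ij}\,\mathcal{L}_iu\,\mathcal{L}_ju\,dx$ is \emph{second order} in the non-local operators $\mathcal{L}_i$, and such forms are not Dirichlet forms merely from $L_i\ge0$ and positive-definiteness of $(a^{ij})$. Those hypotheses make the first-order form $\langle\mathcal{L}_iu,u\rangle=\tfrac12\iint L_i\,(u(x)-u(y))^2$ Markovian, but not $\|\mathcal{L}_iu\|^2=\langle\mathcal{L}_i^2u,u\rangle$; this is the non-local analogue of the classical failure of positivity for the biharmonic semigroup $e^{-t\Delta^2}$. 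Concretely, already for the graph Laplacian on a three-vertex path, $-\mathcal{L}^2$ has a strictly negative off-diagonal entry, so $e^{-t\mathcal{L}^2}$ is not positivity-preserving and the Beurling--Deny contraction $B[u^+,u^+]\le B[u,u]$ fails. Invoking Theorem~\ref{ComponentFeller} does not help, since that concerns each $\mathcal{L}_i$ individually, not the composite second-order operator $P(\mathcal{L})$. The paper's route is entirely different and specifically $p$-adic: positivity preservation is obtained by identifying non-negative functions via invariance under the torus $(\mathds{F}_p^\times)^d$ acting on the Kozyrev-wavelet index $j$, and noting that the eigenspaces $V_\phi$ are torus-stable; the invariant measure is then assembled blockwise on the finite-dimensional $V_\phi$ from the detailed-balance relations~(\ref{matrices}) and shown to extend to a distribution on $W_0^{k,2}(U)$ for $k\ge2$ using the growth of the component eigenvalues.
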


\begin{proof}
Since the operator $-P(\mathcal{L})$ acts on the Hilbert space $L_0^{2}(U)$ (cf.\ Corollary \ref{eigenTestFunctions}),  and its eigenvalues are bounded from above (they are non-positive by Assumption \ref{assumption}), it follows that $e^{-tP(\mathcal{L})}$ is a strongly continuous semigroup acting on $L_0^2(U)$ for $t\ge0$.
Due to the non-positiveness of the eigenvalues of $-P(\mathcal{L})$, the spaces $W_0^{k,2}(U)$ are invariant under $e^{-tP(\mathcal{L})}$ for $t\ge0$, and the semigroup is also strongly continuous on these Hilbert spaces.

\smallskip
The semigroup $e^{-P(\mathcal{L})}$ with $t\ge0$ is also a contraction semigroup, because
\begin{align}\label{contraction}
\norm{\int_0^t e^{-\tau P(\mathcal{L})}u\,d\tau}_{W_0^{1,2}(U)}\le t\norm{u}_{W_0^{1,2}(U)}\,,
\end{align}
which can readily be seen for eigenfunctions first, and then for linear combinations of such using Pythagoras. The reason, why (\ref{contraction}) implies the semigroup to be contractive is that with
\[
R(\lambda)u=\lambda \int_0^\infty e^{-\lambda t}\int_0^t e^{-\tau P(\mathcal{L})}u\,d\tau\,dt
\]
being an expression for the resolvent:
\[
R(\lambda)=\left(\lambda+P(\mathcal{L})\right)^{-1}\,,
\]
it follows that
\begin{align*}
\norm{R(\lambda)u}_{W_0^{1,2}(U)}
&\le \lambda \int_0^\infty\norm{\int_0^t e^{-\tau P(\mathcal{L})}u\,d\tau}_{W_0^{1,2}(U)}\,dt
\\
&\le \lambda\int_0^\infty e^{-\lambda t}t\norm{u}_{W_0^{1,2}(U)}\,dt
\\
&=\frac{1}{\lambda}
\norm{u}_{W_0^{1,2}(U)}\,,
\end{align*}
implying that
\[
\norm{\lambda+P(\mathcal{L})}^{-1}\le \frac{1}{\lambda}\,,
\]
and thus, using the Hille-Yosida Theorem for contraction semigroups \cite[Theorem II.3.5]{EN2006}, it follows that $e^{-tP(\mathcal{L})}$ with $t\ge0$ is a contraction semigroup on $W_0^{k,2}(U)$ with $k\in\mathds{N}$. 

\smallskip
The Markovian property for $k\ge2$ follows from first showing that 
\begin{align}\label{mark1}
f\ge 0\;\text{a.e}\;&\Rightarrow\;e^{-tP(\mathcal{L})}f\ge0\;\text{a.e.}
\\\label{mark2}
f\le 1\;\text{a.e.}\;&\Rightarrow\;e^{-tP(\mathcal{L})}f\le 1\;\text{a.e.}\,.
\\\label{massPreserving}
e^{-tP(\mathcal{L})}1_U&=1_U\,,
\end{align}
and then by exhibiting an invariant measure for $e^{-tP(\mathcal{L})}$ with $t\ge0$.

\smallskip
Statement (\ref{mark1}) is seen thus: $f\ge0$ means that it is a linear combination of eigenfunctions which is invariant under the action of $\left(\mathds{F}_p^\times\right)^d$ via $x\mapsto \underline{j}x=(j_1\xi_1,\dots,j_d\xi_d)$, where
$x=(\xi_1,\dots,\xi_d)\in \closure_\delta U$, and $\underline{j}=(j_1,\dots,j_d)\in\left(\mathds{F}_p^\times\right)^d$. For here, the group $\left(\mathds{F}_p^\times\right)^d$ is called \emph{torus}.
It follows that $f$ is a positive linear combination of torus-invariant sums of eigenfunctions. By the invariance of the eigenspaces under the torus action, cf.\ (\ref{Pphi}) and the $\mathds{F}_p^\times$-invariance of the coordinate Laplacians $\mathcal{L}_i$ for $i=1,\dots,d$ (cf.\ Theorem \ref{ComponentSpectrum}), it follows that this is also the case for $e^{-tP(\mathcal{L})}f$. Property (\ref{mark2}) is verified in a similar manner, because all eigenvalues of $-P(\mathcal{L})$ are non-positive. Property (\ref{massPreserving}) follows from the fact that $1_U$ is an eigenfunction of $-P(\mathcal{L})$ with eigenvalue $0$. 

\smallskip
In order to find an invariant measure, 
the detailed balance condition (\ref{matrices}) can be used by taking for each of the finite-dimensional invariant subspaces $V_\phi$ an the invariant measure $\pi_\phi$ for the semigroup 
\[
e^{-tP(\mathcal{L})_\phi}:=e^{-tP(\mathcal{L})}|_{V_\phi}\,.
\]
It satisfies
\[
e^{-tP(\mathcal{L})_\phi}\pi_\phi f_\phi=\int_U f_\phi(y)\,d\pi_\phi(y)
\]
for $f_\phi\in V_\phi$. Write $f\in\mathcal{D}_0(U)$ as a (finite) sum
\[
f=\sum\limits_{V_\phi}f_\phi\,,
\]
where $f_\phi$ is the orthogonal projection of $f$ onto $V_\phi$. Then, by taking formally
\[
\pi=\sum\limits_{V_\phi}\pi_\phi
\]
as a direct sum, observe that
\[
e^{-tP(\mathcal{L})}\pi f
=\sum\limits_{V_\phi}e^{-tP(\mathcal{L})_\phi}\pi_\phi f_\phi
=\sum\limits_{V_\phi}\int_U f_\phi(y)\,d\pi_\phi(y)
=\int_Uf(y)\,d\pi(y)\,,
\]
i.e.\ $\pi$ is a distribution on $\mathcal{D}_0(U)$. In order to see that it is also one on $W_0^{k,2}(U)$ for $k\ge2$,  approximate $f\in W_0^{k,2}(U)$ with a convergent sequence of test functions $f^{(n)}\in\mathcal{D}_0(U)$, and observe that
\[
\sum\limits_{V_\phi}e^{-tP(\mathcal{L})_\phi}\pi_\phi f_\phi^{(n)}=
\sum\limits_{V_\phi}\int_Uf^{(n)}_\phi(y)\,d\pi_\phi(y)
=e^{-tP(\mathcal{L})}\pi f^{(n)}
\]
converges for $n\to\infty$ to
\begin{align}\label{expression}
\int_U f\,d\pi=\sum\limits_{V_\phi}\int_Uf_\phi\,d\pi_\phi=\sum\limits_{V_\phi}e^{-tP(\mathcal{L})_\phi}\pi_\phi f_\phi = \left(\sum\limits_{V_\phi}e^{-tP(\mathcal{L})_\phi}\pi_\phi\right)f\,,
\end{align}
and this does converge for the following reason: first, observe from (\ref{Pphi}) and (\ref{matrices}) that $\pi_\phi\in V_\phi$ is a tuple containing 
expressions of the form 
\[
\epsilon_2\lambda_{\phi,i}\lambda_{\phi,j}+\epsilon_1\lambda_{\phi,i}+\epsilon_0
\]
with $\lambda_{\phi,\ell}$ the eigenvalue of $P(\mathcal{L})$ corresponding to eigenfunction $\phi$, $\epsilon_r\in\mathset{0,1}$
for $r=0,1,2$ and $i=1,\dots,d$, in their respective order and multiplicities. So, for $f\in W_0^{k,2}(U)$ with $k\ge2$, it holds true that
\begin{align*}
\absolute{\int_Uf\,d\pi}
&=\absolute{\langle f,\pi\rangle}\le\sum\limits_{V_\phi}\absolute{\langle f_\phi,\pi_\phi\rangle}
\\
&\le
\sum\limits_{\phi}
\left(\epsilon_0+\epsilon_1\sum\limits_{i=1}^d\lambda_{\phi,i}+\epsilon_2\sum\limits_{i,j=1}^d
\lambda_{\phi,i}\lambda_{\phi,j}
\right)\absolute{f_\phi}
\\
&\le\norm{f^{\frac12}}_{W_0^{2,2}(U)}^2<\infty\,,
\end{align*}
where
\[
f=\sum\limits_{\phi}f_\phi\phi,\quad f_\phi\in\mathds{C},
\]
is the orthogonal eigendecomposition in $W_0^{k,2}(U)$,
because if $f\in W_0^{k,2}(U)$, then $f^{\frac12}\in W_0^{k,2}(U)$ due to the exponential growth of the eigenvalues of the component Laplacians.
This means that 
\[
\sum\limits_{V_\phi}e^{-tP(\mathcal{L})_\phi}\pi_\phi\in W_0^{k,2}(U)'
\]
is a distribution on $W_0^{k,2}(U)$ for $k\ge2$, which coincides with the formally given distribution
\[
e^{-tP(\mathcal{L})}\pi
\]
together with the identity (\ref{expression}).
Hence, $\pi$ is the distribution on $W_0^{k,2}(U)$ for $k\ge2$, invariant under $e^{-tP(\mathcal{L})}$ for $t>0$.
This now proves the assertions.
\end{proof}

\begin{cor}\label{kernelRepresentation}
The semigroup $e^{-tP(\mathcal{L})}$ with $t\ge0$ has a kernel representation $p_t(x,\cdot)$ for $t\ge0$, $x\in\closure_\delta U$, i.e.\ the map $A\mapsto p_t(x,A)$ is a Borel measure and it holds true that
\[
\int_U p_t(x,dy)f(y)=e^{-tP(\mathcal{L})}f(x)
\]
for $f\in W_0^{k,2}(U)$ with $k\ge2$.
\end{cor}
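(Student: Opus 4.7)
The plan is to construct the measure kernel $p_t(x,\cdot)$ by Riesz--Markov representation applied to positive linear functionals derived from the Markovian semigroup of Theorem \ref{MarkovSemigroup}, and then to verify the kernel identity first on a dense class of test functions and extend.

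First, I would fix $t\ge 0$ and $x\in\closure_\delta U$ and introduce the linear functional
\[
\Lambda_t^x\colon \mathcal{D}_0(U)\to\mathds{C},\quad f\mapsto \left[e^{-tP(\mathcal{L})}f\right](x).
\]
The pointwise evaluation is legitimate because, by Corollary \ref{eigenTestFunctions}, every eigenfunction of $P(\mathcal{L})$ lies in $\mathcal{D}_0(U)$, and for $f\in\mathcal{D}_0(U)$ the eigenexpansion is essentially finite (supported on finitely many spatial cylinders, with rapid decay in the eigenvalue index thanks to Assumption \ref{assumption} and the Hilbert--Schmidt property of $e^{-tP(\mathcal{L})}$ from the Lemma preceding the corollary). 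Hence $e^{-tP(\mathcal{L})}f$ admits a well-defined locally constant representative. The three Markovian properties (\ref{mark1})--(\ref{massPreserving}) of Theorem \ref{MarkovSemigroup} then say exactly that $\Lambda_t^x$ is positive, satisfies $\Lambda_t^x(1_{\closure_\delta U})=1$, and obeys $|\Lambda_t^x(f)|\le \norm{f}_\infty$.

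Next, by Lemma \ref{clopen} the set $\closure_\delta U$ is a compact clopen subset of $F\subset\mathds{Q}_p^d$, hence totally disconnected, and the space $\mathcal{D}_0(U)$ of locally constant functions satisfying the boundary condition is uniformly dense in the space of continuous functions on $\closure_\delta U$ vanishing on $\delta U$. The sup-norm bound from the previous step allows $\Lambda_t^x$ to extend uniquely to a positive linear functional on $C(\closure_\delta U)$ of norm one. The Riesz--Markov--Kakutani theorem then yields a unique Borel probability measure $p_t(x,\cdot)$ on $\closure_\delta U$ with
\[
\left[e^{-tP(\mathcal{L})}f\right](x)=\int_{\closure_\delta U} f(y)\,p_t(x,dy)
\]
for every $f\in C(\closure_\delta U)$, in particular for every $f\in\mathcal{D}_0(U)$. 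For a Borel set $A$, setting $f=1_A$ gives $p_t(x,A)=[e^{-tP(\mathcal{L})}1_A](x)$, from which the measurability of $x\mapsto p_t(x,A)$ is inherited from the action of the semigroup on $L^2_0(U)$.

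Finally, I would lift the identity from $\mathcal{D}_0(U)$ to $W_0^{k,2}(U)$ for $k\ge 2$ by density: given $f\in W_0^{k,2}(U)$, approximate by $f_n\in\mathcal{D}_0(U)$ (using the orthonormal eigenbasis), pass to the limit in $W_0^{k,2}(U)$ on the left-hand side and apply dominated convergence against the probability measure $p_t(x,\cdot)$ on the right. The main obstacle is ensuring pointwise meaningfulness of $[e^{-tP(\mathcal{L})}f](x)$ for $f\in W_0^{k,2}(U)$; this is where the hypothesis $k\ge 2$ enters, because the exponential growth of the component Laplacian eigenvalues combined with the $W_0^{k,2}$-norm control (used in the final estimate in the proof of Theorem \ref{MarkovSemigroup}) guarantees absolute and uniform convergence of the eigenexpansion of $e^{-tP(\mathcal{L})}f$ for $t>0$, so that the limit is realised by a continuous representative on $\closure_\delta U$ and the kernel identity holds pointwise.
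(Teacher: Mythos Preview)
Your argument is essentially correct and rests on the same three facts extracted from Theorem \ref{MarkovSemigroup}---positivity preservation (\ref{mark1}), the sub-Markov bound coming from (\ref{mark2}), and mass conservation (\ref{massPreserving})---but you take a different route to the measure. The paper does not build $p_t(x,\cdot)$ by hand: it simply checks that $e^{-tP(\mathcal{L})}$ maps non-negative measurable functions to non-negative measurable functions, fixes $1_{\closure_\delta U}$, and is bounded on $L^1_0(U)$ (the latter because all eigenvalues of $P(\mathcal{L})$ are non-negative), and then invokes \cite[Proposition 1.2.3]{BGL2014}, which packages exactly these hypotheses into the existence of a kernel. Your approach instead re-derives that abstract statement in this concrete setting via Riesz--Markov--Kakutani on the compact totally disconnected space $\closure_\delta U$, which is more self-contained and makes the probability-measure interpretation of $p_t(x,\cdot)$ explicit; the paper's approach is shorter and sidesteps the density and pointwise-representative bookkeeping you needed in your third step. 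One small wrinkle worth tightening in your version: you pass from ``dense in continuous functions satisfying the boundary condition'' to ``a positive functional on all of $C(\closure_\delta U)$'' without comment---this works because $1_{\closure_\delta U}$ lies in the domain (as used in (\ref{massPreserving})) and the sup-norm bound then extends the functional by Stone--Weierstrass density of locally constant functions in $C(\closure_\delta U)$, but it deserves a sentence. The paper's citation to \cite{BGL2014} absorbs this entirely.
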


\begin{proof}
The operator $e^{-tP(\mathcal{L})}$ takes positive measurable functions to positive measurable functions, cf.\ (\ref{mark1}). Further, 
it holds true that $e^{-tP(\mathcal{L})}1_{\closure_\delta U}=1_{\closure_\delta U}$. Also,  $e^{-tP(\mathcal{L})}$ is bounded on $L^1_0(U)$ for $t\ge0$, because the eigenvalues of $P(\mathcal{L})$ are non-negative. Thus, according to \cite[Proposition 1.2.3]{BGL2014}, a kernel representation $p_t(x,\cdot)$ exists for $x\in\closure_\delta U$, and $t\ge0$. 
\end{proof}

\subsection{Convergence of heat kernel and Green function}

The goal here is to show that $p_t(x,\cdot)$ given by Corollary \ref{kernelRepresentation}
has a probability density function given by the function 
$H(x,y,t)$ in  (\ref{heatKernel}),
 and to show the convergence of the corresponding expression of the associated Green function.

\begin{thm}\label{heatKernelFunction4Semigroup}
The Markov semigroup $e^{-tP(\mathcal{L})}$ on $W_0^{k,2}(U)$ has a heat kernel function given by $H(x,y,t)\in L^2_0(\closure_\delta U)\otimes L^2_0(\closure_\delta U)$ for $t>0$.
\end{thm}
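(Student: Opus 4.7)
The plan is to construct $H(x,y,t)$ explicitly as the spectral kernel associated with the eigenbasis of $P(\mathcal{L})$ on $L_0^2(\closure_\delta U)$ provided by Theorem \ref{Pdiagonalisable} and Corollary \ref{eigenTestFunctions}, show that it lies in $L^2_0(\closure_\delta U)\otimes L^2_0(\closure_\delta U)$ for every $t>0$, and then identify it with the kernel measure $p_t(x,\cdot)$ of Corollary \ref{kernelRepresentation}. Accordingly, I would fix an orthonormal eigenbasis $\{\psi\}\subset\mathcal{D}_0(U)$ with non-negative eigenvalues $\lambda_\psi\ge 0$ and set
\[
H(x,y,t)=\sum_\psi e^{-\lambda_\psi t}\psi(x)\overline{\psi(y)}.
\]
Since $\{\psi(x)\overline{\psi'(y)}\}_{\psi,\psi'}$ is an orthonormal basis of the tensor product Hilbert space, the desired $L^2\otimes L^2$-convergence is equivalent to the Hilbert-Schmidt-type bound $\sum_\psi e^{-2\lambda_\psi t}<\infty$ for each $t>0$.

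The heart of the argument is then the quantitative lower bound on $\lambda_\psi$. Applying the energy estimate of Theorem \ref{energy} to the eigenfunction $\psi$ gives
\[
\lambda_\psi+\gamma=\langle P(\mathcal{L})\psi,\psi\rangle+\gamma\|\psi\|_{L^2}^2\ge\beta\,\|\psi\|_{W_0^{1,2}(U)}^2,
\]
so the growth of $\lambda_\psi$ is controlled by the coordinate Laplacian eigenvalues acting on $\psi$ inside its finite-dimensional invariant block $V_\phi$. Theorem \ref{ComponentSpectrum} yields the explicit growth rate $p^{n_i(1+\alpha_i)}$ of the Kozyrev wavelet eigenvalues in each coordinate $i$, with wavelet multiplicity at scale $n_i$ bounded by $(p-1)p^{n_i}$ times the finite number of $i$-th component discs. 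Using the product structure of the basis $\mathcal{E}$ from Theorem \ref{L2-Spectrum}, one arrives at an estimate of the form
\[
\sum_\psi e^{-2\lambda_\psi t}\le C\,e^{2\gamma t}\sum_{\underline{n}}\prod_{i=1}^d (p-1)p^{n_i}\,\exp\!\bigl(-2t\beta\,p^{n_i(1+\alpha_i)}\bigr),
\]
which converges because in each coordinate $n_i$ is bounded below by compactness of $\pi_i(F)$, while for $n_i\to\infty$ the doubly exponential factor crushes the single exponential multiplicity count.

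Finally, identifying $H(x,y,t)$ with $p_t(x,\cdot)$ is routine: for $f\in W_0^{k,2}(U)$ the orthonormal expansion $f=\sum_\psi\langle f,\psi\rangle\psi$, combined with Fubini and the $L^2$-convergence just established, gives
\[
\int_U H(x,y,t)f(y)\,dy=\sum_\psi e^{-\lambda_\psi t}\langle f,\psi\rangle\psi(x)=e^{-tP(\mathcal{L})}f(x),
\]
so the uniqueness part of Corollary \ref{kernelRepresentation} forces $p_t(x,dy)=H(x,y,t)\,dy$, while the boundary condition $H(\cdot,y,t)|_{\delta U}=H(x,\cdot,t)|_{\delta U}=0$ is inherited from $\psi\in\mathcal{D}_0(U)$. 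The hardest step will be the spectral lower bound: the energy estimate, the block structure from Theorem \ref{Pdiagonalisable}, and the explicit wavelet eigenvalue growth must be combined carefully so that the multiplicities are absorbed by the exponential decay uniformly over $\underline{n}\in\mathds{Z}^d$.
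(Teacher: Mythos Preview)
Your proposal is correct in outline and takes a genuinely different, more quantitative route than the paper. The paper's own proof is extremely short: it simply invokes the trace-class property of $e^{-tP(\mathcal{L})}$ (established in the Lemma immediately following Assumption \ref{assumption}) to conclude $\sum_\psi e^{-t\lambda_\psi}<\infty$, then couples this with a uniform pointwise bound on $\absolute{\psi(x)\overline{\psi(y)}}$ to get pointwise convergence of $H(x,y,t)$ for $t>0$; the identification with $p_t(x,\cdot)$ is left implicit via Corollary \ref{kernelRepresentation}. You instead prove the Hilbert--Schmidt bound $\sum_\psi e^{-2\lambda_\psi t}<\infty$ directly, by combining the energy estimate of Theorem \ref{energy} with the explicit Kozyrev eigenvalue asymptotics of Theorem \ref{ComponentSpectrum} and a multiplicity count over $\underline{n}\in\mathds{Z}^d$. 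What your approach buys is an explicit, self-contained derivation of the summability that does not rely on the one-line trace-class Lemma; in fact your counting argument is essentially the same machinery the paper defers to the \emph{next} result, Corollary \ref{GreenFunctionExists}, where the growth $\lambda_\psi\in O\bigl(p^{2dn(1+\alpha)}\bigr)$ is finally spelled out. The price is that you must bridge the gap between the eigenfunctions $\psi$ of $P(\mathcal{L})$ and the product basis $\phi\in\mathcal{E}$: since $\psi$ lives in a finite-dimensional block $V_\phi$ rather than being a pure tensor, your lower bound $\lambda_\psi+\gamma\ge\beta\norm{\psi}_{W_0^{1,2}}^2$ must be pushed through to the scale parameters $n_i$ via $\norm{\mathcal{L}_i\psi}^2\ge\min_{\phi'\in V_\phi}\lambda_{\phi',i}^2$ and the observation that the test-function coefficients force all $\phi'\in V_\phi$ to sit at comparable scales. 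You flag exactly this as the hardest step, and it is; once it is made precise, your argument goes through.
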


\begin{proof}
In light of Theorem \ref{MarkovSemigroup} and Corollary \ref{kernelRepresentation}, it suffices to prove that $H(x,y,t)\in L^2_0(\closure_\delta U)\otimes L^2_0(\closure_\delta U)$ for $t>0$. 

\smallskip
Assume $x=y$, $t>0$. Then $H(x,x,t)$ for $x\in\closure_\delta U$ is the trace of $e^{-P(\mathcal{L})}$ which is finite by Assumption \ref{assumption}.

\smallskip
Assume $x\neq y$, $t>0$. Since
\[
\absolute{\psi(x)\overline{\psi(y)}}\le\mu(\closure_\delta U)\,
\]
it follows that
\[
\absolute{H(x,y,t)}\le\sum\limits_\psi e^{-t\lambda_\psi}<\infty
\]
for $t>0$ by Assumption \ref{assumption}. This proves the assertion.
\end{proof}

\begin{cor}\label{GreenFunctionExists}
The Green function $G(x,y)$ associated with $-P(\mathcal{L})$ exists and is given by
\[
G(x,y)=\sum\limits_{\psi\atop\lambda_\psi>0}
\lambda_\psi^{-1}\psi(x)\overline{\psi(y)}
\]
for $x,y\in\closure_\delta U$.
\end{cor}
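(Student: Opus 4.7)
The plan is to obtain the formula by integrating the heat kernel (\ref{GreenFunction}) termwise over $t\in[0,\infty)$ and then verifying that the resulting series converges pointwise on $\closure_\delta U\times\closure_\delta U$.

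First I would invoke Assumption \ref{assumption}: the operator $P(\mathcal{L})$ acting on $W_0^{k,2}(U)$ has point spectrum consisting of finitely-many-multiplicity, non-negative eigenvalues $\lambda_\psi$, and the compactness of $e^{-tP(\mathcal{L})}$ (established in the lemma preceding this subsection) forces the positive part of the spectrum to be bounded away from zero by some $\lambda_1>0$. This spectral gap ensures the uniform exponential decay $e^{-\lambda_\psi t}\le e^{-\lambda_1 t}$ for $\lambda_\psi>0$, and in combination with the trace class property used in Theorem \ref{heatKernelFunction4Semigroup} gives $\sum_{\lambda_\psi>0}e^{-\lambda_\psi t}<\infty$ for every $t>0$.

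Second, for fixed $x,y\in\closure_\delta U$ I would apply Tonelli's theorem to the non-negative majorant
\[
\int_0^\infty \sum_{\psi:\lambda_\psi>0} e^{-\lambda_\psi t}\,|\psi(x)\overline{\psi(y)}|\,dt
\le \mu(\closure_\delta U)\sum_{\psi:\lambda_\psi>0}\lambda_\psi^{-1},
\]
using the uniform bound $|\psi(x)\overline{\psi(y)}|\le\mu(\closure_\delta U)$ already used in Theorem \ref{heatKernelFunction4Semigroup}. Once the exchange of summation and integration is justified, $\int_0^\infty e^{-\lambda_\psi t}\,dt=\lambda_\psi^{-1}$ yields the stated formula termwise.

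Third, I would handle the pointwise convergence of the resulting series, which is the main obstacle. For $x\neq y$ the argument used in Corollary \ref{heatKernelFunction} still applies: by Corollary \ref{eigenTestFunctions} each eigenfunction lies in $\mathcal{D}_0(U)$ and inherits the product structure $\psi=\prod_i b_i$ from Theorem \ref{L2-Spectrum}, so Kozyrev-type factors with sufficiently small support cannot simultaneously evaluate non-trivially at $x$ and $y$, leaving only finitely many non-vanishing terms. For $x=y$, convergence of $\sum \lambda_\psi^{-1}|\psi(x)|^2$ is obtained by writing $\lambda_\psi^{-1}=\int_0^\infty e^{-\lambda_\psi t}\,dt$, applying monotone convergence to the positive integrand $|\psi(x)|^2 e^{-\lambda_\psi t}$, and noting that $\int_1^\infty H(x,x,t)\,dt$ is finite by the spectral gap while $\int_0^1 H(x,x,t)\,dt$ is finite by the trace-class bound from Theorem \ref{heatKernelFunction4Semigroup}.

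Finally, to close the argument I would verify that the function so constructed actually solves (\ref{PoissonGreen}): applying $P(\mathcal{L})$ termwise gives $\sum_{\lambda_\psi>0}\psi(x)\overline{\psi(y)}$, which is the reproducing kernel of the orthogonal complement of $\ker P(\mathcal{L})$ in $W_0^{k,2}(U)$ and hence represents $\delta(x-y)$ in the appropriate weak sense, while the boundary condition $G(\cdot,y)|_{\delta U}=0$ is inherited from the containment $\psi\in\mathcal{D}_0(U)$.
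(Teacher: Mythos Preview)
Your termwise integration and the Tonelli manoeuvre are fine, but the convergence argument in your third step has a real gap. You assert that each eigenfunction $\psi$ of $P(\mathcal{L})$ ``inherits the product structure $\psi=\prod_i b_i$ from Theorem \ref{L2-Spectrum}''. That theorem, however, treats only the constant-coefficient case; here $a^{ij},b^i,c\in\mathcal{D}(U)$ and Corollary \ref{eigenTestFunctions} merely tells you that the eigenfunctions are test functions lying in the finite-dimensional invariant blocks $V_\phi$ of Theorem \ref{Pdiagonalisable}. They are therefore finite \emph{linear combinations} of the tensor products $\phi\in\mathcal{E}$, not products themselves. As a result the cross terms $\phi(x)\overline{\phi'(y)}$ with $\phi\neq\phi'$ appearing in $\psi(x)\overline{\psi(y)}$ need not vanish for $x\neq y$, and the ``only finitely many non-zero terms'' mechanism from Corollary \ref{heatKernelFunction} does not carry over. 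Your $x=y$ case is also incomplete: Theorem \ref{heatKernelFunction4Semigroup} gives $H(x,x,t)<\infty$ for each fixed $t>0$, but you need integrability of $t\mapsto H(x,x,t)$ on $(0,1)$, which the trace-class bound by itself does not deliver without further quantitative input.

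The paper avoids the case distinction altogether and supplies exactly that quantitative input: it invokes the explicit growth of the component eigenvalues from Theorem \ref{ComponentSpectrum}, namely $\lambda_{\phi,i}$ of order $p^{n(1+\alpha_i)}$ when $\mu_i(\supp\phi)=p^{-n}$, and then reads off from the block description \eqref{Pphi} that $\lambda_\psi$ grows like $p^{2dn(1+\alpha)}$ with $\alpha=\max_i\alpha_i$. Together with the uniform bound $\lvert\psi(x)\overline{\psi(y)}\rvert\le\mu(\closure_\delta U)$ already used in Theorem \ref{heatKernelFunction4Semigroup}, this makes $\sum_{\lambda_\psi>0}\lambda_\psi^{-1}\lvert\psi(x)\overline{\psi(y)}\rvert$ converge uniformly in $x,y$. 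The missing ingredient in your approach is precisely this direct eigenvalue growth estimate; once you have it, your spectral-gap and finitely-many-terms detours become unnecessary.
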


\begin{proof}
The expression (\ref{GreenFunction}) yields the asserted sum. Its convergence follows from the unbounded growth of the eigenvalues as follows:
Theorem \ref{ComponentSpectrum} says that for $\phi\in\mathcal{E}$, it holds true that
\[
\lambda_{\phi,i}\in O\left(p^{n(1+\alpha_i)}\right)
\]
for $\mu_i(\supp(\phi))=
p^{-n}$ with $n>>0$.
Let 
\[
\alpha=\max\mathset{\alpha_1,\dots,\alpha_d}\,.
\]
Then with (\ref{Pphi}) it follows that
\[
\lambda_\psi\in O\left(p^{2dn(1+\alpha)}\right)\,,
\]
where $\psi$ is assumed to be finite sum of  eigenfunctions $\phi\in\mathcal{E}$ having support maximally of volume $p^{-dn}$ for $n>>0$. Since the value
\[
\absolute{\psi(x)\overline{\psi(y)}}\le\mu(\closure_\delta U)
\]
is bounded for $x,y\in\closure_\delta U$, the asserted convergence now follows.
\end{proof}

\section*{Acknowledgements}
The author wants to thank 
Martin Breunig, Bastian Erdn\"u{\ss}, Markus Jahn, \'Angel Mor\'an Ledezma, David Weisbart, 
and
Wilson Z\'{u}\~{n}iga-Galindo
as well as Andew and John Bradley, 
for fruitful discussions.
User Mizar is thanked for helpful answers to some relevant posts on StackExchange.
This research is partially funded by the Deutsche For\-schungsgemeinschaft under 
project number 469999674.

\bibliographystyle{plain}
\bibliography{biblio}

\end{document}